\newcommand\coc[1]{{\overline{#1}}} 
\newcounter{gr1}
\newcounter{gr1n}
\newenvironment{numlistn}
{\begin{list} { (\roman{gr1n})} {\usecounter{gr1n}
\setlength{\leftmargin}{0.9cm}
\setlength{\topsep}{0.1cm} \setlength{\itemsep}{0.0cm}
\setlength{\parsep}{0.1cm} \setlength{\itemindent}{-0.7cm}
\setlength{\parskip}{0.0cm}}} {\end{list}}
\newcommand{\Int}{(0,T]}
\newcommand{\INT}{[0,T]}
\newcommand{\MA}{{\mathfrak{A}}}
\newcommand{\AAA}{{U}}
\newcommand{\Law}{{\mathcal{L}aw}}
\newcommand{\fh}{{\mathfrak{h}}}
\newcommand{\fhs}{\mathfrak{h} ^ s}
\newcommand{\CMM}{{{M}_{\bar \NN}( \{S_n\times \Int \})}}
\newcommand{\CX}{{{ \mathcal X }}}
\newcounter{lil}
\newcounter{gr111}
\newenvironment{steps}
{\begin{list} { (\roman{gr111})$\, $} {\usecounter{gr111}
\setlength{\labelwidth}{-0.2cm} \setlength{\leftmargin}{0.4cm}
\setlength{\topsep}{0.1cm} \setlength{\itemsep}{0.2cm}
\setlength{\parsep}{0.1cm} \setlength{\itemindent}{0.4cm}
\setlength{\parskip}{0.0cm}}} {\end{list}}
\newcounter{gr11}
\newenvironment{letters}
{\begin{list} { (\alph{gr11})$\, $} {\usecounter{gr11}
\setlength{\labelwidth}{-0.2cm} \setlength{\leftmargin}{0.4cm}
\setlength{\topsep}{0.1cm} \setlength{\itemsep}{0.2cm}
\setlength{\parsep}{0.1cm} \setlength{\itemindent}{0.4cm}
\setlength{\parskip}{0.0cm}}} {\end{list}}
\newcounter{gr1r}
\newcommand{\bNN}{{\bar{ \mathbb{N}}}}
\newcommand{\baray}{\begin{array}{rcl}}
\newcommand{\earay}{\end{array}}
\newcommand{\barray}{\begin{array}{rcl}}
\newcommand{\earray}{\end{array}}
\newcommand{\bcal}{\mathcal{B}}
\newcommand{\mG}{\mathfrak{G}}
\newcommand{\mF}{\mathfrak{F}}
\newcommand{\mT}{\mathfrak{T}}
\newcommand{\mS}{\mathfrak{S}}
\newcommand{\levy}{L\'evy }
\newcommand\blufy[1]{{\color{blue} #1}}
\newcommand\refy[1]{{}}
\newcommand\rmfy[1]{{\color{blue} 
 {#1}}}
\newcommand\dela[1]{}
\newcommand{\bcase}{\begin{cases}}
\newcommand{\ecase}{\end{cases}}
\newcommand\cadlag{c{\'a}dl{\'a}g }
\newcommand\del[1]{}
\newcommand\del[1]{}
\newcommand{\lk}{\left}
\newcommand{\lqq}{\lefteqn}
\newcommand{\rk}{\right}
\newcommand{\la}{{\langle}}
\newcommand{\ra}{{\rangle}}
\newcommand{\LL}{{\rm I \kern -0.2em L}}
\newcommand{\ep} {\varepsilon }
\newcommand{\be} {\begin{enumerate} }
\newcommand{\ee} {\end{enumerate} }
\newcommand{\CO}{{{ \mathcal O }}}
\newcommand{\CK}{{{ \mathcal K }}}
\newcommand{\CC}{{{ \mathcal C }}}
\newcommand{\FG}{{{\mathfrak{G} }}}
\newcommand{\CZ}{{{ \mathcal S }}}
\newcommand{\CT}{{{ \mathcal T }}}
\newcommand{\CI}{{{ \mathcal I }}}
\newcommand{\CH}{{{ \mathcal H }}}
\newcommand{\CS}{{{ \mathcal S }}}
\newcommand{\CB}{{{ \mathcal B }}}
\newcommand{\CM}{{{ \mathcal M }}}
\newcommand{\CP}{{{ \mathcal P }}}
\newcommand{\BF}{{{ \mathbb{F} }}}
\newcommand{\CF}{{{ \mathcal F }}}
\newcommand{\CN}{{{ \mathcal N }}}
\newcommand{\CL}{{{ \mathcal L }}}
\newcommand{\RR}{{\mathbb{R}}}
\newcommand{\Rb}[1]{{\mathbb{R}_{#1}}}
\newcommand{\DD}{\mathbb{D}}
\newcommand{\NN}{\mathbb{N}} 
\newcommand{\PP}{{\mathbb{P}}}
\newcommand{\EE}{ \mathbb{E} }
\newcommand{\TT}{{\rm I \kern -0.2em T}}
\newcommand{\DEQS}{\begin{eqnarray*}}
\newcommand{\EEQS}{\end{eqnarray*}}
\newcommand{\DEQSZ}{\begin{eqnarray}}
\newcommand{\EEQSZ}{\end{eqnarray}}
\newcommand{\DEQ}{\begin{eqnarray}}
\newcommand{\EEQ}{\end{eqnarray}}
\theoremstyle{plain}
\numberwithin{equation}{section}
\begin{document}

\title
[uniqueness  ]{Uniqueness of the nonlinear Schr\"odinger Equation driven by jump processes}
\thanks{The second named author was supported by the FWF-Project P17273-N12.}
\thanks{The research of the third named author on this work was supported by the GA\v CR grant no. GA 15-08819S}

\author{Anne de Bouard, Erika Hausenblas \and Martin Ondrej\'at}

\address{Anne de Bouard, CMAP, Ecole Polytechnique, CNRS, Universit\'e Paris-Saclay, 91128 Palaiseau}
\address{\texttt{debouard{@}cmap.polytechnique.fr}}
\address{Erika Hausenblas, Lehrstuhl Angewandte Mathematik, Montanuniversit\"at Leoben, Franz-Josef-Strasse 18, 8700 Leoben}
\address{\texttt{erika.hausenblas{@}unileoben.ac.at}}
\address{Martin Ondrej\'at, The Institute of Information Theory and Automation, Pod Vod\'arenskou v\v e\v z\'{\i} 4, CZ-182 08, Prague 8, Czech Republic}
\address{\texttt{ondrejat{@}utia.cas.cz}}

\date{\today}

\begin{abstract}
In a recent paper by the first two named authors, existence of martingale solutions to a stochastic nonlinear Schr\"odinger equation driven by a L\'evy noise was proved.
In this paper, we prove pathwise uniqueness, uniqueness in law and existence of strong solutions to this problem using an abstract uniqueness result of Kurtz.
\end{abstract}

\maketitle



\newtheorem{convention}{Convention}[section]
\newtheorem{theorem}{Theorem}[section]
\newtheorem{hypo}{Hypothesis}[section]
\newtheorem{notation}{Notation}[section]
\newtheorem{claim}{Claim}[section]
\newtheorem{lemma}[theorem]{Lemma}
\newtheorem{corollary}[theorem]{Corollary}
\newtheorem{example}[theorem]{Example}
\newtheorem{assumption}[theorem]{Assumption}
\newtheorem{tlemma}{Technical Lemma}[section]
\newtheorem{definition}[theorem]{Definition}
\newtheorem{remark}[theorem]{Remark}
\newtheorem{hypotheses}{H}
\newtheorem{proposition}[theorem]{Proposition}
\newtheorem{Notation}{Notation}
\renewcommand{\theNotation}{}

\renewcommand{\labelenumi}{\roman{enumi}.)}

\textbf{Keywords and phrases:} {Uniqueness results, Yamada-Watanabe-Kurtz Theorem, Stochastic integral of jump type,
stochastic partial differential equations, Poisson random measures, L\'evy processes,
Schr\"odinger  Equation.}

\textbf{AMS subject classification (2002):} {Primary 60H15;
Secondary 60G57.}


\section{Introduction}\label{sec_intro}

This paper is a natural continuation of \cite{meandanne}, where the first two named authors proved existence of global solutions to a stochastic non-linear Schr\"odinger equation driven by a time homogeneous Poisson random measure. The existence of solutions was proved in the weak probabilistic sense, i.e. just on one stochastic basis. The aim of the present paper is to give sufficient conditions in order that these solutions are also strong and unique, i.e. global solutions exist on every stochastic basis and are unique pathwise as well as in law. We proceed first by proving pathwise uniqueness of the solutions and then we apply a result of Yamada-Watanabe-Kurtz to show that these solutions are strong and unique in law.

The Yamada-Watanabe theory has been well developed for stochastic equations driven by Wiener processes, see e.g.\ \cite{cherny,engelbert,jacod,martin1,tappe,chinese,yamada} or \cite{mezerdi} for forward-backward stochastic differential equations. There are also analogous results for equations driven by Poisson random measures. For instance, in \cite{poisson1}, the authors develop the Yamada-Watanabe theory for stochastic differential equations driven by both a Wiener process and a Poisson random measure (where the latter is defined on a locally compact space) via the original method of Yamada and Watanabe \cite{yamada}. In \cite{poisson3}, the Yamada-Watanabe theory is presented for variational solutions of partial differential equations driven by a Poisson random measure on a locally compact space also by the method of Yamada and Watanabe \cite{yamada}. Unfortunately, none of these results is applicable to our problem, mainly because the noise does not live in a locally compact space.

Let us remind the reader that the original proof of Yamada and Watanabe is based on an application of a theorem on existence of a regular version of a conditional probability and their idea has proved in time to be so strong and robust to be applicable not only to stochastic differential equations but also to stochastic partial differential equations driven by various noises. Yet, in 2007, Kurtz \cite{kurtz1} presented an abstract Yamada-Watanabe theory aiming not only at stochastic equations but also at many other stochastic problems of different nature. In that paper, Kurtz was the first one to abandon the original idea of the proof of Yamada and Watanabe (regular version of a conditional probability) and based his proof on the universal Skorokhod representation theorem. This approach made it possible to raise the Yamada-Watanabe theory to an abstract level (see also \cite{kurtz2}) where details of particular problems, to which it is applicable, play no role. On the other hand, this abstract approach has one disadvantage for applications that everyone who wishes to apply the result must translate his particular problem to the language of \cite{kurtz1} which itself is not straightforward.

In this paper, we recourse to \cite{kurtz1} due to its generality. We consider mild solutions to stochastic partial differential equations in Banach spaces driven by time homogeneous Poisson random measures on a Polish spaces (which is not in general locally compact, so we cannot apply the results \cite{poisson1} and \cite{poisson3}), we translate this problem to the language  of \cite{kurtz1} and prove the standard existence of unique strong solutions. This result is then applied to the stochastic non-linear Schr\"odinger equation driven by a time homogeneous Poisson random measure.

To be more precise,
let $A=\Delta$ be the Laplace operator with $D(A)=\{ u\in L ^2 (\RR ^d): \Delta u \in L ^2 (\RR ^d)\}$.
We are interested in the solution of the following equation
\del{\DEQS
\lqq{ i {\dot{u}(t,x) 
} +\lk( \Delta u(t,x) +\lambda |u(t,x)|^{\alpha-1}
u(t,x) \rk) } &&
\\ &
=&  u(t,x)\,\dot{L_t}+ u(t,x) z_\nu \, dt ,\quad x\in\RR^d,\, t\in\INT.
\EEQS

Here, $\dot{L}_t$ is a symbol for the  Radon Nikodym derivative of a L\'evy process with characteristic measure $\nu$ and taking values in a Banach function space $Z$, $z_\nu\in Z$, and $\lambda\ge 0$. This equation can be rewritten as}
\DEQSZ\label{itoeqn1}
\\
\nonumber \lk\{ \baray \lqq{ i \, d u(t,x)  -  \Delta u(t,x)\,dt +\lambda|u(t,x)|^{\alpha-1} u(t,x) \, dt\hspace{2cm}}&&
\\&=& \int_S u(t,x)\, g(z(x))\,\tilde \eta (dz,dt)+\int_S u(t,x)\, h(z(x))\, \nu(dz)\, dt,\quad t\in\INT,
\\
u(0)&=& u_0, \earay \rk.
 \EEQSZ
{where $\eta$ denotes the Poisson random measure corresponding to $L$ and $\tilde\eta$ the compensated Poisson random measure, $\nu $ the intensity of the Poisson random measure.}
This equation can be rewritten in terms of a L\'evy process having characteristic measure $\nu$. For more details on the connection of L\'evy processes and Poisson random measure we refer to section 2.3 in \cite{reacdiff} and the references therein. We would like to remark, that Poisson random measures are more general than L\'evy processes.

If the stochastic perturbation is a Wiener process, the equation is well treated and existence and uniqueness of the solution is {known}.
For more information see  \cite{anne2,anne1}. In case the stochastic perturbation is replaced by a \levy process with infinite activity, de Bouard and Hausenblas could only show in \cite{meandanne} the existence of a solution, without uniqueness. Here in this work we are interested {in conditions under which} a unique solution exists.

Since we will use it later on, we will introduce some notations.
\begin{notation}\label{notationref}
$\RR$ denotes the real numbers, $\RR^+:=\{ x\in\RR:x>0\}$ and $\RR^+_0:=\RR^+\cup\{0\}$. By $\mathbb{N}$ we denote the set of natural numbers (including $0$) and by $\bar{\mathbb{N}}$ we denote the set $\mathbb{N}\cup\{\infty\}$.
\end{notation}

\begin{notation}\label{notationref2}
{If $(\mathcal F_t)_{t\in\INT}$ is a filtration and $\theta$ a measure then we denote by $\mathcal F_t^\theta$ the augmentation of $\mathcal F_t$ by the $\theta$-null sets in $\mathcal F_\infty$.}
\end{notation}

\begin{definition}\label{notationref3}
{
A measurable space $(S,\CS)$ is called Polish if there exists a metric $\varrho$ on $S$ such that $(S,\varrho)$ is a complete separable metric space and $\CS=\mathscr B(S,\varrho)$.
}
\end{definition}

\begin{notation}\label{notationref4}
{
The set of all {finite non-negative} measures on a Polish space $(S,\CS)$ will be denoted by $M_+(S)$ and $\CP_1(S)$ will stand for probability measures on $\CS$. If a family of sets $\{S_n\in\CS:n\in\NN\}$ satisfy $S_n\uparrow S$ then $M_{\bar \NN}(\{S_n\})$ denotes the family of all $\bar{\mathbb{N}}$-valued measures $\theta$ on $\CS$ such that $\theta(S_n)<\infty$ for every $n\in\Bbb N$. By $\CM_{\bar \NN}(\{S_n\})$ we denote the $\sigma$-field on $M_{\bar \NN}(\{S_n\})$ generated by the functions $i_B:M_{\bar \NN}(\{S_n\})\ni\mu \mapsto \mu(B)\in \bNN$, $B\in \CS$.
}
\end{notation}

{The proof of the following result shall be deferred to the appendix, see Lemma \ref{lc3} in Section \ref{pomesp}.}

\begin{lemma}\label{measure_lemma}
{
Let $(S,\CS)$ be a Polish space and the family $\{S_n\in\CS\}$ satisfy $S_n\uparrow S$. Then $(M_{\bar \NN}(\{S_n\}),\CM_{\bar \NN}(\{S_n\}))$ is a Polish space.
}
\end{lemma}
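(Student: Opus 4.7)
My plan is to realize $(M_{\bar\NN}(\{S_n\}),\,\CM_{\bar\NN}(\{S_n\}))$ as a closed subspace of a countable product of Polish spaces and to verify that the induced Borel $\sigma$-field coincides with $\CM_{\bar\NN}(\{S_n\})$. The scheme mirrors the standard construction of a Polish topology on point measures on a locally compact Polish space, with the exhaustion $S_n\uparrow S$ substituting for local compactness.

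As a preliminary reduction, I would refine the Polish topology on $S$, preserving $\CS$, so that every $S_n$ becomes clopen; such a refinement always exists for a countable distinguished family of Borel sets in a standard Borel space. For each $n$, a $\bar\NN$-valued measure on $(S_n,\,\CS\cap S_n)$ that is finite on $S_n$ is necessarily a finite counting measure $\sum_{i=1}^k \delta_{x_i}$, so the set $M_n$ of such measures may be identified with $\bigsqcup_{k\ge 0} S_n^k/\mathfrak{S}_k$ endowed with the natural disjoint-union/quotient topology. A standard argument shows that $M_n$ is Polish and that its Borel $\sigma$-field is generated by the evaluations $\theta\mapsto \theta(B)$, $B\in\CS\cap S_n$. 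Moreover, since $S_n$ is clopen in $S_{n+1}$, the restriction map $M_{n+1}\to M_n$, $\theta\mapsto \theta(\,\cdot\,\cap S_n)$, is continuous.

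I would then consider
\[
r\colon M_{\bar\NN}(\{S_n\}) \to \prod_{n\in\NN} M_n, \qquad r(\mu) = (\mu(\,\cdot\,\cap S_n))_n.
\]
Injectivity follows from $\mu(B)=\lim_n \mu(B\cap S_n)$ for every $B\in\CS$, and the image is the coherent subset $\{(\theta_n)_n : \theta_{n+1}(\,\cdot\,\cap S_n)=\theta_n \text{ for every } n\}$, which is closed in the product topology by continuity of the restrictions. Conversely, every coherent sequence extends uniquely to a measure in $M_{\bar\NN}(\{S_n\})$; hence $r$ is a bijection onto a closed subspace of the Polish space $\prod_n M_n$, so $M_{\bar\NN}(\{S_n\})$ inherits a Polish topology.

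Finally, to match the Borel $\sigma$-field with $\CM_{\bar\NN}(\{S_n\})$: for any $B\in\CS$, the map $i_B(\mu)=\lim_n \mu(B\cap S_n)$ is a pointwise limit of Borel maps (each $i_{B\cap S_n}$ being continuous through $r$), so $i_B$ is Borel, yielding one inclusion. Conversely, the Borel $\sigma$-field of each $M_n$ is generated by its own evaluations, whose pullbacks under $r$ are of the form $\mu\mapsto\mu(B\cap S_n)=i_{B\cap S_n}(\mu)$, and hence belong to $\CM_{\bar\NN}(\{S_n\})$. The main technical hurdle is the second paragraph — establishing that $M_n$ with the symmetric-product topology is Polish and has the stated Borel structure; once this is in hand, the remainder is routine bookkeeping about product and restriction topologies.
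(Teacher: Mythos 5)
Your proposal is correct, and its overall skeleton coincides with the paper's: both proofs reduce the problem to the finite-mass pieces $\mu(\cdot\cap S_n)$, embed $M_{\bar \NN}(\{S_n\})$ into the countable product of the corresponding spaces of finite integer-valued measures via $\mu\mapsto(\mu(\cdot\cap S_n))_n$, identify the image as the closed set of coherent sequences (using that the $S_n$ can be made clopen after refining the Polish topology, exactly as in the paper's appeal to Kechris (13.5)), and then match the generated $\sigma$-field with the Borel $\sigma$-field by a two-sided inclusion argument essentially identical to the paper's. Where you genuinely diverge is in the key technical ingredient: the paper endows the space $M_+(S)$ of all finite non-negative measures with the L\'evy--Prokhorov metric, proves separately that the evaluation-generated $\sigma$-field equals the Prokhorov--Borel $\sigma$-field (Lemma \ref{lc1}) and that the integer-valued measures form a closed subset (Lemma \ref{lc2}), and so obtains each factor as a closed subspace of a known Polish space; you instead identify each finite integer-valued measure with a finite point configuration and realize the factor as $\bigsqcup_{k\ge0}S_n^k/\mathfrak{S}_k$. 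Your route requires the (true, standard, but unproved in your sketch) fact that a finite $\NN$-valued Borel measure on a Polish space is purely atomic with integer weights, hence a finite sum of Dirac masses, plus the Polishness and Borel structure of the symmetric products --- the hurdle you yourself flag; the paper's route avoids any structure theory for integer-valued measures at the cost of working with the Prokhorov metric and a regularity/closedness argument. Both are complete in spirit; the paper's version has the advantage that its two auxiliary lemmas are proved in full, whereas your second paragraph would need to be fleshed out to the same standard.
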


\del{\color{blue}
\begin{definition} If $\eta\in\CMM$ then we define
\begin{equation}\label{etat}
\eta_t(V)=\eta(V\cap(S\times(0,t])),\,\eta^t(V)=\eta(V\cap(S\times(t,\infty))),\,V\in\CB(S\times\Bbb R^+).
\end{equation}
\end{definition}
}

\del{\begin{remark}
\rmfy{The intensity measure is $\sigma$--finite, that means one can exclude $\infty$ !}
{\color{RawSienna} A problem here:}
If $X$ is a separable metric space {\color{RawSienna} in fact, we need a Polish space here}, then \blufy{$M_{[0,1]}(X)$} \refy{$\CM(X)$} is metrizable (see Parthasarathy Theorem 6.2, p. 53 {\color{RawSienna} this reference works only for probability measures but measures in $M_{\overline{\Bbb N}}(X)$ can attain infinity.}). Proposition 3.10 Vakhania, page 48 {\color{RawSienna} this reference also works only for probability measures} says that
the topology defined by the sets coincides with the topology defined by open or closed sets.
\Red{!!!!!!!!!!!!!!!} {\color{RawSienna} We need this remark to hold for $M_{\overline{\Bbb N}}(Z)$ and we need it to be not only a separable but also a complete space.}
\end{remark}
}

\section{Time homogeneous Poisson random measures}

Since the definition of time homogeneous Poisson random measure is introduced in many, not always equivalent ways, we give here {our} definition.
\begin{definition}\label{def-Prm}(see \cite{ikeda}, Def. I.8.1)
Let $(S,\CS)$  be a Polish space, 
$\nu$ a $\sigma$--finite measure on $(S,\CS)$, {$\{S_n\in\CS\}$ such that $S_n\uparrow S$ and $\nu(S_n)<\infty$ for every $n\in\Bbb N$}.
A
{\sl time homogenous Poisson random measure} $\eta$ 
over a filtered  probability space $(\Omega,\CF,\BF,\PP)$, where $\BF=(\CF_t)_{t\in\INT}$,  is a measurable function
$$\eta: (\Omega,\CF)\to ({ M_{\bar{\Bbb N}}(\{S_n\times\Int \}),\mathcal M_{\bar{\Bbb N}}(\{S_n\times\Int \})}) ,
$$
such
that
{
\begin{trivlist} \item[(i)]
for each $B\in  \CS \otimes
\mathcal{B}({\mathbb{R}^+}) $ with $\EE \eta(B) < \infty$
 $\eta(B):=i_B\circ \eta : \Omega\to \bar {\mathbb{N}} $ is a Poisson random variable with parameter   $\EE \eta(B)$, otherwise  $\eta(B)=\infty$ a.s.
\item[(ii)] $\eta$ is independently scattered, i.e.\ if the sets $
B_j \in   \CS\otimes \mathcal{B}({\mathbb{R}^+})$, $j=1,\cdots,
n$, are  disjoint,   then the random variables $\eta(B_j)$,
$j=1,\cdots,n $, are mutually independent;
\item[(iii)] for each $U\in \CS$, the $\bar{\mathbb{N}}$-valued
process $(N(t,U))_{t\in \INT }$  defined by
$$N(t,U):= \eta(U \times (0,t]), \;\; t\in \INT $$
is $\BF$-adapted and its increments are stationary and independent of the past,
i.e.\ if $t>s\geq 0$, then $N(t,U)-N(s,U)=\eta(U \times (s,t])$ is
independent of $\mathcal{F}_s$.
\end{trivlist}
}
\end{definition}


\begin{remark}\label{compensator}
In the framework of Definition \ref{def-Prm}  the assignment
$$\nu: \CS \ni  A \mapsto 
 \mathbb{E}\big[\eta( A\times(0,1))\big] 
 $$
defines a  uniquely determined measure, called in the following {\sl intensity measure}.

\end{remark}

In addition, the term of Poisson random measure is sometimes defined in another way, starting with the intensity measure
and defining the Poisson random measure with given intensity measure.
However, we put the equivalence with the  other definition as a Lemma.


\begin{lemma}\label{eqdefprm} \refy{Let $S$ be a separable metric space.}
A measurable mapping $\eta:\Omega\to{M_{\bar{\Bbb N}}(\{S_n\times\Int \})}$ is a time homogeneous Poisson random measure with intensity  $\nu$ iff
\begin{letters}\label{prmpoints}
\item for any $\AAA \in\CZ$ with $\nu(U)<\infty$, the random variable $N (t,\AAA ) $ is
Poisson distributed with parameter $t\,\nu(\AAA )$, otherwise $\PP\lk( N(t,U)=\infty\rk)=1$;
\item for any $n$ and
disjoint sets $\AAA _1,\AAA _2,\ldots,\AAA _n\in\CZ$, and any $t\in[0,T]$, the random
variables $N (t,{\AAA _1}) $, $N (t,{\AAA _2}) $, \ldots, $N (t,{\AAA _n}) $  are  mutually independent;
\item the ${M_{\bar{\Bbb N}}(\{S_n\})}$-valued process  $(N (t,\cdot))_{t\in \Int }$ is adapted to $\BF$; 
%
\item for any
$t\in[0,T]$, $\AAA \in\CS$, $\nu(U)<\infty$, and any $r,s\ge t$, the random variables
$N (r,\AAA )-N (s,\AAA )$ are independent of $\CF_t$.
\end{letters}
\end{lemma}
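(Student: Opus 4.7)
The forward implication (Def.~\ref{def-Prm} $\Rightarrow$ (a)--(d)) is a direct specialization. Fix $U$ with $\nu(U)<\infty$ and set $B_t=U\times(0,t]$: by (i), $N(t,U)=\eta(B_t)$ is Poisson with mean $g(t):=\mathbb E\eta(B_t)$, and (iii) forces $g$ to be additive, non-decreasing and measurable with $g(1)=\nu(U)$ by Remark~\ref{compensator}, hence $g(t)=t\nu(U)$, proving (a); the case $\nu(U)=\infty$ follows from (i) as well. Item (b) is (ii) applied to the disjoint rectangles $U_j\times(0,t]$, (c) is contained in (iii), and (d) is (iii) combined with $\mathcal F_t\subseteq\mathcal F_s$ for $s\ge t$.

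The reverse direction ((a)--(d) $\Rightarrow$ Def.~\ref{def-Prm}) is the main work. First, for a single rectangle $U\times(s,r]$ with $\nu(U)<\infty$, I would write $\eta(U\times(s,r])=N(r,U)-N(s,U)$; by (c), $N(s,U)$ is $\mathcal F_s$-measurable, by (d) the increment is independent of $\mathcal F_s$, and by (a) both $N(r,U)$ and $N(s,U)$ are Poisson, so a characteristic-function calculation gives $\eta(U\times(s,r])\sim\mathrm{Poisson}((r-s)\nu(U))$. Next, for a grid of disjoint rectangles $\{V_k\times(t_{\ell-1},t_\ell]\}$ with $V_k\in\mathcal S$ pairwise disjoint of finite $\nu$-measure and $0\le t_0<\cdots<t_m\le T$, I prove joint independence of the atomic blocks by induction on $\ell$: the vector $\xi_\ell:=(N(t_\ell,V_k)-N(t_{\ell-1},V_k))_k$ is, by (d), independent of $\mathcal F_{t_{\ell-1}}$, which by (c) contains $\xi_1,\ldots,\xi_{\ell-1}$; within a single layer, joint independence of the components of $\xi_\ell$ follows from a characteristic-function argument exploiting (b) applied to $(N(t_\ell,V_k))_k$ and to $(N(t_{\ell-1},V_k))_k$ together with the decomposition $N(t_\ell,V_k)=N(t_{\ell-1},V_k)+\xi_{\ell,k}$ and the independence of $\xi_\ell$ from $\sigma((N(t_{\ell-1},V_k))_k)\subseteq\mathcal F_{t_{\ell-1}}$. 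Adding independent Poissons then yields (i) and (ii) on the algebra $\mathcal A$ generated by such rectangles.

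The extension to the full product $\sigma$-algebra is a standard Laplace-functional / monotone-class argument: applied to $f\mapsto\mathbb E\exp(-\int f\,d\eta)$ on simple $\mathcal A$-measurable $f$ and extended by monotone convergence, it propagates (i) and (ii) to arbitrary disjoint measurable sets of finite intensity in $\mathcal S\otimes\mathcal B(\mathbb R^+)$, while the case $\mathbb E\eta(B)=\infty$ follows by partitioning $B$ into pieces of finite intensity via the $\sigma$-finiteness $S_n\uparrow S$ of $\nu$ and noting that a sum of independent Poissons with divergent means is a.s.\ infinite. Condition (iii) is then immediate, collecting adaptedness from (c), independence of increments from (d), and stationarity from the parameter formula $(r-s)\nu(U)$. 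The main obstacle I anticipate is the joint-independence step inside a single layer, because (d) as written only speaks of a single $U$ at a time; the key is to read it jointly (the whole family $\{N(r,U)-N(s,U):r,s\ge t,U\in\mathcal S\}$ is independent of $\mathcal F_t$) and to extract joint independence of the components of $\xi_\ell$ from the additive decomposition $N(t_\ell,V_k)=N(t_{\ell-1},V_k)+\xi_{\ell,k}$ by comparing joint characteristic functions at the two times $t_\ell$ and $t_{\ell-1}$.
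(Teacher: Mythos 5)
Your proposal is correct in substance but follows a genuinely different, and far more explicit, route than the paper. The paper's proof is essentially a two-line reduction: it identifies $\CB(S\times\Int)$ with $\CB(S)\otimes\CB(\Int)$ (citing Parthasarathy) so that condition (i) of Definition \ref{def-Prm} restricted to product sets $U\times(0,t]$ becomes exactly (a), remarks that the intervals $(0,t]$ generate $\CB(\Int)$ in order to pass between (b) and the independently scattered property (ii), and reads (c)--(d) directly off the definition of $N(t,U)$; all the gluing is delegated to the cited generation results. You instead rebuild the Poisson random measure from a grid upward: the Poisson law of a single increment by dividing characteristic functions, mutual independence of the grid blocks by induction over time layers, and a Laplace-functional/monotone-class extension to all of $\CS\otimes\CB(\RR^+)$, with the infinite-intensity case handled via $S_n\uparrow S$. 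This buys a self-contained argument and, more importantly, exposes the one real subtlety that the paper's proof passes over in silence: condition (d) as literally written only asserts that each \emph{single} increment $N(r,U)-N(s,U)$ is independent of $\CF_t$, whereas both your within-layer factorisation and your induction across layers need the \emph{vector} $(N(t_\ell,V_k)-N(t_{\ell-1},V_k))_k$ to be independent of $\CF_{t_{\ell-1}}$. With only the componentwise statement, the identity $\EE[e^{i\sum_ku_k(A_k+X_k)}]=\EE[e^{i\sum_ku_kA_k}]\,\EE[e^{i\sum_ku_kX_k}]$ cannot be justified and your characteristic-function comparison becomes circular, since the factorisation presupposes exactly the joint independence being proved. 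You flag this and adopt the joint reading of (d); that is the reading under which the forward implication delivers (d) anyway (via the independently scattered property together with adaptedness), so the equivalence is consistent, but the joint interpretation should be made explicit in the statement --- the paper's terser proof tacitly relies on the same convention when it appeals to generation by the intervals $(0,t]$.
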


\begin{proof}
To see the  equivalence of (i) and (a), first observe, that  if $S$ is a separable metric space, the Borel space $\CB(S\times \Int )$ of the cartesian product $S\times \RR^ +$ is the product of
the Borel spaces $\CB(S)$ and $\CB(\RR^ +)$, see \cite[p. 6, Theorem 1.10]{parth}. This implies the equivalence from (i) to (a).
To show the equivalence of (ii) and (b), one has in addition to take into account that the Borel $\sigma$--algebra can be generated by intervals of the form $\{(0,t]:t\in\Int \}$.
The equivalence of (iii), and (c) and (d) follows by the definition of $N (t,{\AAA})$.
\end{proof}

Usually, one starts with specifying the measurable space $(S,\CS)$ and the intensity measure $\nu$ on $(S,\CS)$. Given this, then there exists a
Poisson random measure {on $(S,\CS)$ having} the intensity measure $
\nu$.

\medskip

In order to define a stochastic integral with respect to the Poisson random measure,
$S$ has to be related to a topological vector space and the measure $\nu$ has either to be finite or has to be a L\'evy measure.

\begin{definition}\label{def:levy}(See \cite[Chapter 5.4]{linde})
Let $E$ be a separable Banach space with dual $E^\ast$.
A symmetric $\sigma$-finite Borel measure $\nu$ on $E$ is called a {\sl
symmetric L\'evy measure} if and only if
\begin{trivlist}
\item[(i)]
 $\nu(\{0\} )=0$, and
\item[(ii)]
 the
function
$$
E^\ast \ni a\mapsto  \exp \left( \int_E (\cos\langle x,a\rangle
-1) \; \nu(dx)\right)
$$
is the characteristic function of a Radon measure on $E$.
\end{trivlist}
A $\sigma$-finite Borel measure $\nu$ on $E$ is
called a L\'evy measure provided its symmetrisation part $\tilde{\nu}$ is a symmetric L\'evy measure.
\end{definition}
\del{\begin{remark}
As remarked in \cite[Chapter 5.4]{linde} we do not need to suppose
that the integral $\int_E (\cos\langle x,a\rangle -1) \;
\nu(dx)$ is finite. However, see ibid. Corollary 5.4.2, if
$\nu$ is a symmetric L\'evy measure, then, for each $a \in E^\ast$, the integral in question is finite.
\end{remark}
}

\begin{remark}
Let $S$ be a separable Banach space, and $\CS$ its Borel $\sigma$--algebra.
If  the intensity measure
$
\nu:\CS\to\RR
$
{satisfies} the integrability condition
$$
\sup_{a\in S^ \ast\atop |a|\le 1}  \int_{S} 1\wedge |\la z,a\ra |^2\,  \nu(dz)<\infty.
$$
then $\nu$ is a \levy measure (see \cite[Proposition 5.4.1, p.\ 70]{linde}). 
\end{remark}

For some Banach spaces, one can characterize the \levy measures in a more precise way. Therefore, let us introduce the following definition. Let $\{\ep_k:k\in\NN\}$ be a sequence of independent, identically distributed  random variables with $\PP\lk( \ep_1=1\rk)=\PP\lk( \ep_1=-1\rk)=\frac12$. Then a Banach space with norm $|\cdot|$ is of $R$--type $p$ (Rademacher type $p$), if for any sequence $\{ x_j:j\in \NN\}$ belonging to $l_p(E)$, we have (compare \cite[p.\ 40]{linde})
$$\PP\lk(\Big| \sum_{j=1}^\infty \ep_j x_j\Big|<\infty\rk)=1
.
$$
The Minkowski inequality implies, that each Banach space is of $R$--type $1$, the range of $p$ is usually between one and two.

\begin{remark}\label{ourass1}
Let $(S,\CS)$ be a Polish space, the family $\{S_n\in\CS\}$ satisfy $S_n\uparrow S$, and $\nu$ be a $\sigma$--finite measure with $\nu(S_n)<\infty$ for any $n\in\NN$.
Fix $p\in[1,2]$. We assume that $E$ {is} a separable Banach space of $R$--type $p$, and that
$\xi:(S,\CS)\to (E,\CB(E))$ {is} a  measurable mapping.
In addition, we assume that the intensity measure
$
\nu:\CS\to\RR_0^+
$
{satisfies} the integrability condition
\DEQSZ\label{heregrow}
  \int_{S} 1\wedge | \xi(z) |_E^p\,  \nu(dz)<\infty,\quad  \mbox{and} \quad  \nu(\{0\})=0.
\EEQSZ
Then, the measure $\nu_E$ induced by $\xi$ on $E$ is a \levy measure (and $\nu_E(\{0\}):=0$) (compare \cite[p.\ 75]{linde}). In addition, if $\eta$ is a Poisson random measure with intensity $\nu$
over a filtered probability space $(\Omega,\CF,\BF,\PP)$,  the process
$$
L:\INT 
\ni t \mapsto \int_0^ t \int_S \xi(z\,)\, ( \eta-\nu\times \lambda)(dz,ds)
$$
is a \levy process over  $(\Omega,\CF,\BF,\PP)$.
\end{remark}

Hence, from now on we will 
assume during the whole paper that the following convention 
 is valid.

\begin{convention}\label{ourass}
{We convene that} $(S,\CS)$ {is} a Polish space, 
$\nu$ a $\sigma$--finite measure on $(S,\CS)$ and $S_n\in\CS$ such that $S_n\uparrow S$ and $\nu(S_n)<\infty$ for every $n\in\Bbb N$.
\end{convention}

%
%
%
%
%

Let us consider a  filtered  probability space $(\Omega,\CF,\BF,\PP)$, where $\BF=\{\mathcal{F}_t\}_{t\in \INT }$ denotes a
filtration.
A process $\xi:[0,T]\times \Omega\to X$
is progressively measurable, or simply, progressive, if its restriction to $\Omega\times [0,t]$ is
$\CF_t\otimes\CB([0,t])$--measurable for any $t\ge 0$.
The predictable random field $\CP $ on $\Omega\times
\RR _ +$ is the $\sigma$--field generated by all continuous
$\BF$--adapted  processes (see e.g.\ Kallenberg \cite[Chapter
25, p.\ 491]{kallenberg}).

A real valued stochastic process $\{x(t):t\in\INT \}$,  defined on a
filtered probability space $(\Omega,\CF,\BF,\PP)$ is  called
predictable, if the mapping  $x:\Omega\times\Int \to\Rb{}$ is
$\CP/\mathcal{B}(\RR)$-measurable. A random measure $\gamma$ on {$\mathcal S\otimes\mathcal B(\Int )$}
over $(\Omega;\CF,\BF,\PP)$ is called
predictable, iff for each $U\in \CS$, the $\RR$--valued process
$\Int \ni t\mapsto \gamma( U\times(0,t])$ is predictable.

\begin{definition}\label{def-imPrm}
Assume that  $(S,\CS )$ is a measurable space and $\nu$ is a
non-negative $\sigma$--finite measure on $(S,\CS )$. Assume that $\eta$ is a   {time homogeneous} Poisson
random measure  with  intensity measure $\nu$ on  $(S,\CS )$
over $(\Omega,\CF,\BF,\PP)$.  The {\sl compensator} of  $\eta$ is
the unique {predictable random measure}, denoted by $\gamma$, on {$\mathcal S\otimes\mathcal B(\Int )$} over {$(\Omega,\CF,\BF,\PP)$}, such that
for each $T<\infty$ and $A\in \CZ $ with $\EE\eta( A\times
(0,T])<\infty$, the $\mathbb{R}$-valued processes
$\{\tilde{N}(t,A)\}_{t\in (0,T]}$  defined by
$$
\tilde{N}(t,A):= \eta(  A\times (0,t] )-\gamma( A\times (0,t] ),
\quad   0< t\le T,
$$
is a martingale on $(\Omega,\CF,\BF,\PP)$.
\end{definition}
\begin{remark}
Assume that $\eta$ is a time homogeneous Poisson random measure
with intensity $\nu$ on  $(S,\CS)$ over $(\Omega,\CF,\BF,\PP)$. It
turns out that the compensator $\gamma$ of $\eta$ is uniquely
determined and moreover
$$
\gamma: \CS \times \CB(\RR^+)\ni (A,I)\mapsto  \nu(A)\times
\lambda(I).
$$
Here $\lambda$ denotes the Lebesgue measure on $\RR$.
The  difference between a time homogeneous  Poisson random measure
$\eta$  and its compensator $\gamma$, i.e.  $\tilde
\eta=\eta-\gamma$, is called a  {\sl compensated Poisson random
measure}.
\end{remark}

Let  $(S,\CS)$ be a measurable space
and
let $\eta$ be a time homogenous Poisson random measure on $S$ with intensity  measure $\nu$ being a positive $\sigma$--finite measure 
 over
$\mathfrak{A}$ satisfying Convention \ref{ourass}.
We will denote by $\tilde{\eta }$ the \it  compensated Poisson random measure \rm  defined by
$\tilde{\eta }: = \eta - \gamma $, where the compensator
 $\gamma : \bcal (\Int )\times \CS  \to {\Int }$ satisfies in our case the following equality
$$
     \gamma (I \times B) = \lambda (I) \nu (B) ,
\qquad I \in \bcal ({\Int }) , \quad  B \in \CS .
$$

\begin{lemma}\label{poiss_uni_distr}
Let \refy{$(S,\CS)$ be a measurable space and} $\nu$ be a non--negative $\sigma$--finite  measure on $S$  satisfying Convention \ref{ourass}. 
Then the following holds
\begin{enumerate}
  \item there exists a probability space $\mathfrak{A}=(\Omega,\CF,\PP)$ and a time homogenous Poisson random measure $\eta:\Omega\to {M_{\bar{\Bbb N}}(\{S_n\times
  \Int \})}$ with the intensity measure $\nu$;
  \item \refy{In addition, let us assume that $S$  is  a separable metric space.} Denote by $\Theta_\nu$ the law of $\eta$ on ${\mathcal M_{\bar{\Bbb N}}(\{S_n\times\Int \})}$.  If $\eta^\sharp$ is a time homogenous Poisson random measure defined possibly on different stochastic base $\mathfrak{A}^\sharp=(\Omega^\sharp,\CF^\sharp,\PP^\sharp)$ and $\nu$ is the intensity measure for $\eta^\sharp$ then $\Theta_\nu$ is the law of $\eta^\sharp$ on ${\mathcal M_{\bar{\Bbb N}}(\{S_n\times\Int \})}$.
\end{enumerate}
\end{lemma}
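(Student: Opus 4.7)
My plan is to handle the two parts separately, using a standard constructive argument for existence and a $\pi$-$\lambda$/monotone class argument for uniqueness in distribution.

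For part (1), I would exploit the hypothesis that $S_n\uparrow S$ with $\nu(S_n)<\infty$ to decompose $S$ into pairwise disjoint measurable pieces $B_n:=S_n\setminus S_{n-1}$ (with $S_0=\emptyset$), each of finite intensity $c_n:=\nu(B_n)$. On a sufficiently rich probability space $(\Omega,\CF,\PP)$ (obtained as a countable product), I would place, independently over $n$ with $c_n>0$, (a) a standard Poisson process $\{N^{(n)}_t\}$ with rate $c_n$ and jump times $0<\tau^{(n)}_1<\tau^{(n)}_2<\ldots$ and (b) an i.i.d.\ sequence $\{\xi^{(n)}_k\}_{k\ge 1}$ in $B_n$ with law $\nu(\cdot\cap B_n)/c_n$, independent of the Poisson process. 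Setting
\begin{equation*}
\eta^{(n)}:=\sum_{k\ge 1}\delta_{(\xi^{(n)}_k,\tau^{(n)}_k)},\qquad \eta:=\sum_{n\ge 1}\eta^{(n)},
\end{equation*}
a direct computation shows that $\eta(B\times(0,t])$ is Poisson with parameter $t\nu(B)$ for every $B\in\CS$ with $\nu(B)<\infty$ and any $t\in\Int$, that $\eta$ is independently scattered on disjoint sets of the form $B\times(s,t]$ (combining independence across the $B_n$'s with the standard thinning/independence property of Poisson processes), and that $t\mapsto\eta(U\times(0,t])$ has independent, stationary increments. Measurability of $\eta$ with values in $M_{\bar\NN}(\{S_n\times\Int\})$ follows from Lemma \ref{measure_lemma} together with the fact that each evaluation $i_B\circ\eta$ is clearly a random variable; taking $\mathcal F_t=\sigma(\eta(V\cap (S\times(0,t])):V\in\CS\otimes\mathcal B(\Int))$ gives the required filtration. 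This yields the three items of Definition \ref{def-Prm}.

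For part (2), by Lemma \ref{measure_lemma} the target space is Polish and its Borel $\sigma$-algebra coincides with $\mathcal M_{\bar\NN}(\{S_n\times\Int\})$, which is generated by the evaluation maps $i_B$, $B\in\CS\otimes\mathcal B(\Int)$. Hence the laws $\Theta_\nu$ and $\Theta^\sharp_\nu$ of $\eta$ and $\eta^\sharp$ coincide if and only if they agree on the algebra generated by finite-dimensional cylinders, i.e.\ if for every finite choice of sets $B_1,\ldots,B_k\in\CS\otimes\mathcal B(\Int)$ the random vectors $(\eta(B_1),\ldots,\eta(B_k))$ and $(\eta^\sharp(B_1),\ldots,\eta^\sharp(B_k))$ have the same distribution. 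By a standard refinement argument it is enough to verify this when the $B_j$'s are pairwise disjoint, and by the Dynkin/$\pi$-$\lambda$ theorem applied to the $\pi$-system of rectangles $\{A\times(s,t]:A\in\CS, \ \nu(A)<\infty,\ 0\le s<t\le T\}$, which generates $\CS\otimes\mathcal B(\Int)$ up to $\nu\times\lambda$-null behaviour, it suffices to consider pairwise disjoint rectangles of that form.

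For such disjoint rectangles, Definition \ref{def-Prm}(i),(ii) together with Remark \ref{compensator} force the coordinates $\eta(A_j\times(s_j,t_j])$ to be independent Poisson random variables with parameters $(t_j-s_j)\nu(A_j)$, and the identical statement holds for $\eta^\sharp$; the joint distributions therefore agree, which determines $\Theta_\nu=\Theta^\sharp_\nu$. The main obstacle I anticipate is technical rather than conceptual: verifying carefully that the $\pi$-system of finite-measure rectangles is enough to generate $\mathcal M_{\bar\NN}(\{S_n\times\Int\})$ modulo the way infinite values are handled (on sets $B$ with $\nu\times\lambda(B)=\infty$ one must separately argue that both $\eta(B)$ and $\eta^\sharp(B)$ equal $+\infty$ almost surely, which follows by monotone approximation from finite-measure subsets of $B$ using $\sigma$-finiteness and the fact that a Poisson variable with parameter tending to infinity diverges in probability).
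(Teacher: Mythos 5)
Your proposal is correct, but it follows a noticeably different route from the paper's. For part (1) the paper simply cites Theorem 8.1 of Ikeda--Watanabe, whereas you reprove that theorem by the explicit superposition construction (rate-$c_n$ Poisson processes on the disjoint slices $B_n=S_n\setminus S_{n-1}$ with i.i.d.\ marks); this is essentially the content of the cited result, so you gain self-containedness at the cost of length. For part (2) the difference is more substantive: the paper reduces equality of laws to equality in law of the integrals $\eta(f)$ for bounded continuous $f$ (via Parthasarathy) and then jumps to comparing the \emph{one-dimensional} marginals $\eta(U\times I)$, never explicitly invoking the independently scattered property --- as written, that reduction is incomplete, since one-dimensional marginals alone do not determine the law of a random measure. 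Your argument supplies exactly the missing step: you work with finite-dimensional cylinder distributions, use a $\pi$-$\lambda$/ring argument to reduce to pairwise disjoint rectangles $A\times(s,t]$ of finite intensity, and then use independence of the coordinates (Definition \ref{def-Prm}(ii)) together with the Poisson marginals to identify the joint law; your separate treatment of sets of infinite intensity by monotone approximation is also needed and is absent from the paper. The one place where you should be a little more explicit is the monotone class argument showing that the cylinders over the ring generated by finite-intensity rectangles do generate all of $\CM_{\bar\NN}(\{S_n\times\Int\})$ (this uses $\mu(B)=\lim_n\mu(B\cap(S_n\times\Int))$ and that each $\mu(\cdot\cap(S_n\times\Int))$ is finite), but you have flagged this yourself and it is routine. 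In short: same endpoint, but your uniqueness argument is the more careful of the two.
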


\begin{proof}
Part i.) is given by Theorem 8.1 \cite[p.\ 42]{ikeda}. It remains to show ii.).
Since $\nu$ is $\sigma$--finite, there exists a increasing family $\{S_n:n\in\NN\}$ with $S_{n+1} \supseteq S_n$, $S_n\uparrow S$, and $\nu(S_n)<\infty$.
To show that $\eta$ and $\eta^ \sharp$ have the same law on $\CM_{\bar \NN}(S\times\Int )$, we have to show that for all $f:S\times \Int \to\RR$,
bounded and continuous, the random variable $\eta(f):= \int_{S_n}\int_0^ T  f(s,t)\,\eta(ds,dt)$ and $\eta^ \sharp(f):= \int_{S_n}\int_0^ T f(s,t)\, \eta^ \sharp(ds,dz)$ have the same law, see \cite[Theorem 5.8, p.\ 38]{parth}.
Since $S\times \RR^ +$ is a Polish space,  the $\sigma$ algebra generated by the family of bounded continuous functions coincides with the Borel--$\sigma$--algebra, see \cite[Proposition 1.4, p.5]{vakhania}. Therefore, it is sufficient to show for all $n\in\NN$, $U\in\CB(S_n)$ and $I\in\CB(\Int )$, that the random variables
$\eta(U\times I)$ and $\eta^ \sharp(U\times I)$ have the same law.
Let $\Theta_\nu^\sharp$ be the law of $\eta_\sharp$ and let us assume $\nu(U),\lambda(I)<\infty$.
Let $k\in\NN_0$. Then, by the definition of the Poisson random measure and its intensity measure $\nu$ we know that
\DEQS
\Theta_\nu( \eta(U\times I])=k) &= & e^{-\lambda(I)\nu(U)}\, {(\nu(U)\,\lambda(I))^k\over k!}
\\
&=&\Theta_\nu^\sharp( \eta_\sharp(U\times I)=k).
\EEQS
If $\nu(U)=\infty$ or $\lambda(I)=\infty$, then $\Theta_\nu\lk( \eta(U\times I)=\infty\rk)=1=\Theta_\nu^ \sharp\lk( \eta(U\times I)=\infty\rk)$.
\end{proof}

Now, one can define the stochastic integral with respect to the Poisson random measure.
Here, one has two possibilities at ones disposal, to use predictable integrands, or more general, to use progressively measurable integrands.  The stochastic integral with predictable integrands is introduced e.g.\ in the book of Ikeda and Watanabe \cite{ikeda} or in the book of Applebaum \cite{apple}, the stochastic integral with progressively measurable integrands is introduced e.g.\  in
  \cite{Brz+Haus_2009} in $M$--type $p$ Banach spaces.

\begin{definition}\label{def-mart-intext}
Let $0<p\le 2$. A Banach space $E$ is of  martingale type $p$  iff
there exists a constant $C>0$ 
such that for all
$E$-valued  finite martingale $\{M_{n}\}_{n=0}^N$  the
following inequality holds
\DEQSZ\label{nnn} \sup_{0\le n\le N } \mathbb{E} | M_{n} |_E ^{p} \le  C\, 
\mathbb{E}  \sum_{n=0}^N  | M_{n}-M_{n-1} |_E ^{p},
\EEQSZ 
 where as  usually, we put  $M_{-1}=0$.
\end{definition}

Examples of $M$--type $p$ Banach spaces are, e.g.\ $L^q(\CO)$ spaces, where $\CO$ is a bounded domain. $L^q(\CO)$ is of $M$--type $p$ for any $p\le q$ (see e.g.\ \cite[Chapter 2, Example 2.2]{mtypep}).
Is a Banach space $E$ is of $M$--type $p$ and $A$ a generater of an analytic semigroup on $E$, then the complex interpolation spaces between $D(A)$ and $E$ are of $M$--type $p$.
Similar fact holds also for real interpolation spaces, but not in this generality, for more details we refer to Appendix A of \cite{zmtypep}.
In particular, in   \cite{Brz+Haus_2009}  it  is proven that for any Banach space $E$ of $M$--type $p$ there exists a unique
continuous linear operator $I$ which associates   to  each progressively measurable
process $\xi: {\mathbb{R}}_{+} \times \Omega \to L^p(S,\nu;E) $  with $\PP$-a.s.\
\begin{equation} \label{cond-2.01}
   \int_{0}^{T} \int_{S} \vert \xi (r,x) \vert_E^{p} \, \nu (dx) dr   < \infty,
\end{equation}
for every $T>0$, an adapted $E$-valued c\`{a}dl\`{a}g process
$$
  {I}_{\xi , \tilde{\eta }} (t):=\int_{0}^{t} \int_{S} \xi (r,x) \tilde{\eta } (dr,dx), \quad t \ge 0
$$
such that if a process $\xi$ satisfying  the above condition (\ref{cond-2.01}) is a
random step process with representation
\DEQSZ\label{kkk}
\xi(r,x) = \sum_{j=1} ^n 1_{(t_{j-1}, t_{j}]}(r) \, \xi_j(x),\quad x\in S,\quad  r\in \INT ,
\EEQSZ
where $\{t_0=0<t_1<\ldots<t_n<\infty\}$ is a finite partition of $[0,\infty)$ and
for all $j\in \{ 1, \ldots ,n \} $,    $\xi_j$ is  an $E$-valued $\CF_{t_{j-1}}$--measurable $p$-summable simple  random variable,
then
\begin{equation} \label{eqn-2.02}
 I_{\xi,\tilde{\eta }} (t) 
=\sum_{j=1}^n  \int_S  \xi_j (x) \,\tilde \eta \lk((t_{j-1}\wedge t, t_{j}\wedge t] ,dx \rk),\quad   t\in \INT .
\end{equation}
This definition can be extended to all progressively measurable mappings $\xi:\Omega\times \INT \times S\to E$  with $\PP$--a.s.
\DEQS
\int_0^ T \int_S\min(1,|\xi(r,z)|^p_E) \nu(dz)\, dr<\infty .
\EEQS
Some information of the different setting is given in \cite{ruediger}.

In addition, we would like to point out in the following Proposition,  that we do not need to suppose that the filtration of the given probability space is right continuous.
In particular,  given a Poisson random measure $\eta$ over a filtered probability space $(\Omega,\PP,\CF,\BF)$, $\BF=(\CF_t)_{t\in \INT}$, with an arbitrary filtration,
 a progressively measurable  $L^2(S,\nu)$--valued process $\xi$, one can pass to the right continuous augmentation of the filtration without loosing the necessary  properties.
 In particular, the following holds.

\begin{proposition}\label{ehpro}
Let $E$ be a Banach space $E$ of $M$--type $p$, $(S,\CS)$ a measurable space {subject to} Convention  \ref{ourass} and $\eta$ a Poisson random measure
on a filtered probability space $\mathfrak{A}=(\Omega,\PP,\CF,\BF)$ {with the intensity measure $\nu$}, with an arbitrary filtration $\BF=(\CF_t)_{t\in \INT }$.
Let  $\xi: \INT \times \Omega \to L^p(S,\nu;E) $  be a
 progressively measurable
process with $\PP$-a.s.\
\begin{equation} \label{cond-2.01.1}
   \int_{0}^{T} \int_{S} \vert \xi (r,x) \vert_E^{p} \, \nu (dx) dr   < \infty.
\end{equation}
Let $\bar {\mathfrak{A}}=(\Omega,\PP,\CF,\tilde \BF)$, $\tilde \BF=(\bar \CF_t)_{t\in \INT }$ be the right continuous filtration given by $\bar \CF_t:= \wedge _{h>0}\mathcal F^{\Bbb P}_{t+h}$
and let  $\bar \xi: {\mathbb{R}}_{+} \times \Omega \to L^p(S,\nu;E) $  be
an $\bar \BF$--progressively measurable
process with $\PP$-a.s.\
\begin{equation} \label{cond-2.01.2}
   \int_{0}^{T} \int_{S} \vert \bar \xi (r,x) \vert_E^{p} \, \nu (dx) dr   < \infty.
\end{equation}
Let us assume that $\xi$ and $\bar \xi$ have the same law on $L^ p([0,T];L^p(S,\nu;E))$.
Let  $\bar\CI$ and $\CI$ be defined by
$$
  \overline {\CI} (t):=\int_{0}^{t} \int_{S} \bar\xi (r,x) \tilde{\eta } (dr,dx), \quad t \in \INT,
  $$
and
$$
 {\CI} (t):=\int_{0}^{t} \int_{S} \xi (r,x) \tilde{\eta } (dr,dx), \quad t \in \INT,
$$
where the stochastic integral is defined as before.
Then, the triplets $(\eta,\bar \xi,\bar \CI)$ and $(\eta,\xi,\CI)$ have the same distribution on $\CM(\{S_n\times [0,T]\})\times L^ p([0,T];L^p(S,\nu;E))\times E$.
\end{proposition}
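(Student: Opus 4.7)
The strategy is a standard approximation argument reducing everything to the case of simple random step processes, for which the integral is given by the explicit formula \eqref{eqn-2.02}. For a simple $\varphi$ of the form \eqref{kkk}, this formula realizes $I_{\varphi,\tilde\eta}(t)$ as a fixed measurable functional of the pair $(\eta,\varphi)$ alone---it is a finite sum of the compensated Poisson masses $\tilde\eta((t_{j-1}\wedge t, t_j\wedge t]\times\cdot)$ paired with the coefficients $\varphi_j$, and the filtration with respect to which $\varphi$ is adapted plays no role in the formula. Consequently the joint law of the triplet $(\eta,\varphi,I_{\varphi,\tilde\eta})$ is determined by the joint law of $(\eta,\varphi)$.

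Next I would construct a fixed pathwise measurable operator $\Pi_n$ that sends an $L^p(S,\nu;E)$-valued progressively measurable process to a simple process of the form \eqref{kkk}, by truncating via indicators of the exhausting family $\{S_n\}$ (on which $\nu$ is finite), applying a left-sided time mollification such as $r\mapsto n\int_{(r-1/n)^+}^r \xi(s)\,ds$, and then sampling on a dyadic grid. Because $\Pi_n$ acts pathwise using only past values, it preserves $\BF$-progressive measurability of $\xi$ and $\tilde\BF$-progressive measurability of $\bar\xi$ separately; being a measurable map on the path space $L^p([0,T];L^p(S,\nu;E))$, it transports the hypothesis $\xi\stackrel{d}{=}\bar\xi$ to $\Pi_n\xi\stackrel{d}{=}\Pi_n\bar\xi$, and the same is true jointly with $\eta$ (since $\Pi_n$ does not touch $\eta$); and $\Pi_n\xi\to\xi$ in $L^p([0,T];L^p(S,\nu;E))$ $\PP$-almost surely by standard mollification estimates, with an analogous statement for $\bar\xi$.

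It then remains to pass to the limit $n\to\infty$. By the Banach-space $M$-type $p$ inequality \eqref{nnn} underlying the integral construction in \cite{Brz+Haus_2009}, one has $I_{\Pi_n\xi,\tilde\eta}(t)\to I_{\xi,\tilde\eta}(t)$ in $L^p(\Omega;E)$, and similarly $\bar I_{\Pi_n\bar\xi,\tilde\eta}(t)\to \bar I_{\bar\xi,\tilde\eta}(t)$. Combined with the equality in joint law of $(\eta,\Pi_n\xi,I_{\Pi_n\xi,\tilde\eta})$ and $(\eta,\Pi_n\bar\xi,\bar I_{\Pi_n\bar\xi,\tilde\eta})$ granted by the first paragraph (applied to the simple integrands $\Pi_n\xi$ and $\Pi_n\bar\xi$), this yields the desired equality of the limiting triplets after taking a subsequence converging almost surely in $E$.

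The main obstacle is designing $\Pi_n$ so that it simultaneously (a) acts as a measurable pathwise operator on $L^p([0,T];L^p(S,\nu;E))$, so that it commutes with taking distributions and transfers identities in law, (b) preserves adaptedness with respect to both filtrations $\BF$ and $\tilde\BF$ without ever referring to either of them, and (c) converges to the identity in the topology on integrands that controls the stochastic integral via \eqref{nnn}. This is a mild but genuine technicality, made slightly more delicate by the fact that $\nu$ is only $\sigma$-finite and must be exhausted by the sets $\{S_n\}$ before any averaging is performed.
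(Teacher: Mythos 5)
Your overall architecture (reduce to random step processes via a pathwise, adaptedness-preserving approximation operator, use the explicit formula \eqref{eqn-2.02} to see that the step-process integral is a fixed measurable functional of the pair $(\eta,\varphi)$, then pass to the limit) is the standard route and is essentially what the paper's closing phrase ``the assertion follows from the definition of the integral'' alludes to. But it omits the one point that is the actual mathematical content of the proposition and the sole focus of the paper's proof: you must verify that $\eta$ is still a time homogeneous Poisson random measure --- equivalently, that $\tilde\eta$ still generates martingales --- with respect to the right continuous augmented filtration $\tilde\BF$, $\bar\CF_t=\bigwedge_{h>0}\CF^{\PP}_{t+h}$. Your approximants $\Pi_n\bar\xi$ are only $\tilde\BF$-adapted, so the assertion that formula \eqref{eqn-2.02} applied to them is ``the'' stochastic integral, and above all your limit passage $I_{\Pi_n\bar\xi,\tilde\eta}\to\bar\CI$ via the $M$-type $p$ inequality \eqref{nnn}, both presuppose that the construction of \cite{Brz+Haus_2009} is available for the filtration $\tilde\BF$; the inequality \eqref{nnn} is a martingale inequality and is simply not applicable until one knows that $\EE[\tilde\eta((t_{j-1},t_j]\times U)\mid\bar\CF_{t_{j-1}}]=0$. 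The paper proves exactly this, by reverse martingale convergence along $\CF_{t_{j-1}+1/n}$ together with the fact that $\eta((t_{j-1},t_{j-1}+1/n]\times U)$ is Poisson with parameter $\nu(U)/n$ and hence tends to $0$ a.s. (no fixed-time atoms). Without this step your argument proves nothing about $\bar\CI$.

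Two smaller points. First, your transfer ``$(\eta,\Pi_n\xi)\overset{d}{=}(\eta,\Pi_n\bar\xi)$ since $\Pi_n$ does not touch $\eta$'' requires equality of the \emph{joint} laws of $(\eta,\xi)$ and $(\eta,\bar\xi)$, whereas the stated hypothesis only gives equality of the marginal laws of $\xi$ and $\bar\xi$; this is arguably an imprecision of the proposition itself, but you should flag that you are using the joint version. Second, \eqref{cond-2.01.1} is assumed finite only $\PP$-a.s., not in expectation, so convergence of the integrals in $L^p(\Omega;E)$ is not available; you need a localization by stopping times (as in Theorem \ref{equality} of the appendix) and convergence in probability, which fortunately suffices for the identification of laws.
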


\del{
{\color{Red} Wouldn't this be enough? Instead of Proposition \ref{ehpro}?
\begin{remark} In the setting above, $\eta$ is a Poisson random measure also for the the augmented right continuous filtration $\bar \CF_t:= \wedge _{h>0}\mathcal F^{\Bbb P}_{t+h}$, hence we can construct the stochastic integral on $(\Omega,\mathcal F,(\mathcal F_t),\Bbb P)$, resp. $(\Omega,\mathcal F,(\bar\CF_t),\Bbb P)$
$$
I_1=(\mathcal F_t)-\int_0^t\int_S\xi(r,x)\tilde\eta(dr,dx),\quad\textrm{resp.}\quad I_2=(\bar\CF_t)-\int_0^t\int_S\xi(r,x)\tilde\eta(dr,dx).
$$
In this case, $I_1=I_2$.
\end{remark}
}
}

\begin{proof}
In fact, this is given by Theorem 7.23 of \cite{kallenberg}.
To be more precise, let
$\xi$ be given with representation \eqref{kkk}, then the stochastic integral is defined by
 the martingale $(M_n)_{n=1}^k$, where
  $$
 M_n=\sum_{j=1}^n  \int_S  \xi_j (x) \,\tilde \eta \lk((t_{j-1}, t_{j}] ,dx \rk)
 $$
Since for each $j=1,\ldots, k$, we have $\PP$-a.s.\ for
the conditional expectation
$$
\lim_{n\to \infty } \EE\lk[   \,\tilde \eta \lk((t_{j-1}, t_{j}] \times U  \rk)\mid \CF_{t_{j-1}+\frac 1n }\rk]
=\EE\lk[  \,\tilde \eta \lk((t_{j-1}, t_{j}]\times U  \rk)\mid \CF_{t_{j-1}}^+\rk]
$$
and on the other hand we have
$$
\lim_{n\to \infty } \EE\lk[   \,\tilde \eta \lk((t_{j-1}, t_{j}] \times U  \rk)\mid \CF_{t_{j-1}+\frac 1n }\rk]=
\lim_{n\to \infty } \eta\lk((t_{j-1}, t_{j-1}+\frac 1n ] \times U   \rk)-\nu(U)\, \frac 1n.
$$
Since
$$
 \eta\lk((t_{j-1}, t_{j-1}+\frac 1n ] \times U   \rk)  
$$
is a Poisson distributed random variable with it parameter $\nu(U)\, \frac 1n$, it follows that $\PP$-a.s.\
$\eta\lk((t_{j-1}, t_{j-1}+\frac 1n ] \times U   \rk)\longrightarrow 0$ as $n\to\infty$.
The assertion follows from the definition of the integral.
\end{proof}

%

\section{{Pathwise uniqueness of the stochastic Schr\"odinger Equation}}

We are interested in uniqueness of the stochastic Schr\"odinger equation driven by a L\'evy noise.
The nonlinear Schr\"odinger equation is an example of a universal nonlinear model that
describes many physical nonlinear systems. The equation can be applied to hydrodynamics,
nonlinear optics, nonlinear acoustics, quantum condensates, heat pulses in solids and various
other nonlinear instability phenomena.
The Schr\"odinger equation arises also in the context of water waves.
In $1968$ V.E.\ Zakharov   derived the Nonlinear Schr\"odinger equation for the two-dimensional water wave problem in the absence of surface tension, that is,
for the evolution of gravity driven surface water waves.
More recently, Villarroel, et all \cite{villa2,villa1}  considered the nonlinear Schr\"dinger equation with
randomly distributed, but isolated jumps. Dealing with jumps one may model sudden changes in the field that occur randomly.

In order to model abrupt changes in the medium, as it can e.g. be the case for the propagation of light in optical fibers,
or of other parameters, one can use L\'evy noise. In \cite{meandanne}
the first and second author investigated the existence of solution, if the L\'evy measure has infinite activity.  However,
no uniqueness was proven.

From now on,
 $S$ will be a {Borel subset of a separable Banach} function space  continuously embedded in the Sobolev space $ W^ 1_{\infty}(\RR^d)$.
 As mentioned in the introduction, the equation we are considering is given by
\DEQSZ\label{itoeqn}
\\
\nonumber \lk\{ \baray \lqq{ i \, d u(t,x)  -  \Delta u(t,x)\,dt +\lambda|u(t,x)|^{\alpha-1} u(t,x) \, dt\hspace{2cm}}&&
\\&=& \int_S u(t,x)\, g(z(x))\,\tilde \eta (dz,dt)+\int_S u(t,x)\, h(z(x))\, \gamma (dz, dt),\quad t\in [0,T],\\
u(0)&=& u_0. \earay \rk.
 \EEQSZ
with $\lambda\ge 0$. Here,  $g:\RR\to \mathbb{C}$ and $h:\RR \to \mathbb{C}$ are  two functions satisfying the following items:
\renewcommand{\theenumi}{(\Roman{enumi})}
\begin{steps}
\item
$g$, $\nabla g$, $h $ and $\nabla h $ are  of linear growth, i.e.\ there exist some constants $C_g$ and $C_h $ such that
$$|g(\xi )|, |\nabla g(\xi )|\le C_g |\xi | \quad \mbox {and} \quad  |h (\xi )|,|\nabla h (\xi )|\le C_h  |\xi |.
$$
  \item $g(0)=0$ and $h (0)=0$.
  \item $|\Im(h (\xi ))|\lesssim |\xi |^ 2$ and $|\Im(g(\xi ))|\lesssim |\xi |^ 2$
\end{steps}
Here, $\Im$ denotes the imaginary part of a number. From condition (i) one can derive from condition \eqref{heregrow}
a similar condition for the associated Nemityski operator defined later on.

\medskip

Let us denote by $(\CT(t))_{t\ge 0}$  the group of isometries generated by the operator $-iA$.
In particular, for any $t\in\RR$ and $u_0\in L^ 2(\RR^ d )$  let us denote the solution of the following
Cauchy problem
\DEQS
\lk\{ \barray i\,\dot u(t) &=& Au(t),\\
u(0)&=& u_0,
\earray\rk.
\EEQS
by  $\CT(t)u_0$.
Observe, $(\CT(t))_{t\ge 0}$  forms a unitary group on $L^ 2(\RR^ d )$ and $H^\gamma_2(\RR^d)$ for any $\gamma\in\RR$.
In the framework of evolution equation one considers the
mild solution of Equation \eqref{itoeqn}, which is given by the following integral equation for $t\in\INT$
%
\DEQS
 u(t)  & = & \CT(t) u_0+ i \lambda \int_0^ t \CT(t-s) (
|u(s)|^{\alpha-1} u(s))\, ds
\\ &&   {}- i \int_0^t\int_S \CT(t-s) \lk[ u(s)\, G(z)\rk] \tilde \eta(dz,ds)
-i \int_0^ t \int_S \CT(t-s) \lk[ u(s) H(z) \rk] \, \nu(dz) \, ds
.
\EEQS
Here, we used the Nemytskii operators  corresponding  to $g$ and $h$. In particular, the mappings $G:L ^ 2(\RR^ d )\to L^ 2 (\RR^ d )$ and $H:L ^ 2(\RR^ d )\to L^ 2 (\RR^ d )$ denote the Nemytskii operators associated to the functions $g$ and $h$ defined by
$$
(G(y)) (x) :=g(y(x)),\quad 
\mbox{and}\quad  (H(y)) (x) :=h(y(x)),\quad y\in S,\,x\in\RR^ d .
$$

\begin{definition}
Let 
$T>0$.
We call $u$ an $L^2 (\RR^d)$--valued mild solution to Equation \eqref{itoeqn} on the time interval $[0,T]$, iff $u$ is an adapted \cadlag process in $L^2 (\RR^d)$,
the terms
$$
\int_0^ t\lk| \CT(t-s)
{[|u(s)|^{\alpha-1} u(s)]}\rk|_{L^2 }\, ds,\quad  
 \int_0^t\int_{\{y\in S: |\CT(t-s)u(s)\, G(y)|_{L^ 2}< 1\}} \lk| \CT(t-s) {[u(s)\, G(y)]}\rk|_{L^2 }^ 2 \nu(dy) \, ds,
$$
$$
 \int_0^t\int_{\{y\in S: |\CT(t-s)u(s)\, G(y)|_{L^ 2}\ge 1\}} \lk| \CT(t-s) {[u(s)\, G(y)]}\rk|_{L^2 } \nu(dy) \, ds,
$$
 and
$$
 \int_0^t\int_S\lk| \CT(t-s) {[ u(s)\, H(y)]}\rk|_{L^2 }  \nu(dy) \, ds,
$$
are $\PP$-a.s.\ finite  and for any $t\in[0,T]$, the process  $u$ solves $\PP$-a.s.\ the integral equation
\DEQSZ
\label{eq_1}
\\
\nonumber
\lqq{ u(t)   =   \CT(t) u_0+ i\lambda \int_0^ t \CT(t-s)
{[|u(s)|^{\alpha-1} u(s)]}\, ds
  {}}
  &&
  \\\nonumber
  &&{}
  - i \int_0^t\int_S  \CT(t-s) \lk[ u(s) G(y)\rk] \tilde \eta (dy,ds)
-i \int_0^ t \int_S \CT(t-s)\lk[ u(s) H(y)\rk]\nu(dy)\, ds
.
\EEQSZ
\end{definition}

Since in the proof of existence of solution compactness arguments are used, the underlying probability space {gets}
lost. Hence, a concept of probabilistic weak solutions  has to be introduced, which is done in the following definition.

\begin{definition}\label{Def:mart-sol}
Let $u_0\in L^2(\RR^d)$.
A {\sl martingale solution} on $L^2(\RR^d)$  to the Problem  \eqref{itoeqn}  is a system
\begin{equation}
\left(\Omega ,{\CF},{\Bbb P},\BF, 
\eta,u\right)
\label{mart-system}
\end{equation}
such that
\begin{numlistn}
\item  $(\Omega ,{\CF},\BF,{\Bbb P})$ is a  filtered
probability
space with filtration $\BF=(\CF_t)_{t\in\INT }$,
\item
$\eta$  is a time homogeneous Poisson
random measure on $(S,\CS)$ over $(\Omega
,{\CF},\BF,{\Bbb P})$ with intensity measure $\nu$  satisfying Convention \ref{ourass},
\item $u$  is {an} $L^2 (\RR^d )$--valued mild solution  to  the Problem \eqref{itoeqn}.
\end{numlistn}
%
\end{definition}

Let us remind that
 $S$ is a function space  continuously embedded in the Sobolev space $ W^ 1_{\infty}(\RR^d)$. 
In addition, we assume that the intensity  measure satisfies the following integrability conditions:
\label{integrabilitycondition}
  \begin{enumerate}
    \item $C_0(\nu):=\int_S |z|_{L^\infty}^ 2 \nu(dz)<\infty;\phantom{\Big|}$
    \item $C_1(\nu):=\int_S |z|_{W^ 1 _\infty}^ 2 \nu(dz)<\infty;\phantom{\Big|}$
    \item  $C_2(\nu):=\int_S 
 \int_{\RR ^d}    |x|^ 2 |z(x)|^ 2 \, dx\, \nu(dz)<\infty;\phantom{\Big|}$
    \item  $C_3(\nu):=\int_S |z|_{L^ \infty}^ 4 \nu(dz)<\infty.\phantom{\Big|}$
  \end{enumerate}
\begin{remark}
One can  see from the proof of Theorem 2.7 in \cite{meandanne} that the large jumps
have no effect on the uniqueness result and one can easily generalize our Theorem \ref{main} to the case where one has large jumps without any bounded moments.
\end{remark}

Let
$$1\le \alpha< \bcase  1+4/(d-2)& \mbox{ for } d>2,
\\ \infty& \mbox{ for }  d=1 \mbox{ or } 2,\ecase
$$
In \cite{meandanne} we have shown that under the conditions stated above, there exists a {martingale} solution. For the sake of completeness, we state here
the main result of the article. Before, since we will need it later on, let us introduce  the mass by
\DEQSZ\label{mass}
\mathcal{E}(u):= \int_{\RR^d} | u(x)|^ 2 \, dx,
\EEQSZ
and the energy by 
\DEQSZ\label{hamiltonian}
\\
\nonumber
\mathcal{H}(u):= \frac12 \int_{\RR^d} |\nabla u(x)|^ 2 \, dx + {\lambda\over \alpha+1} 
\int_{\RR^d } |u(x)|^ {\alpha-1} { u(x)\, \coc{u(x)}\,} dx.
\EEQSZ


\begin{theorem}\label{main}
Let  $\eta$ be  a time homogenous Poisson random measure on  $S$ with \levy measure $\nu$ satisfying
the integrability conditions (i), (ii) and (iii) given  above.
If  $u_0\in H ^1_2(\RR ^d)$, $\lambda\ge 0$, and
$$ \int_{\RR^d} |x|  ^2 |u_0(x)| ^2 \, dx <\infty,
$$
then there exists a $H^1_2(\RR^d)$--valued martingale solution to \eqref{itoeqn}, which is \cadlag in $H^{\gamma}_2(\RR^d)$
for any $\gamma<1$.

In addition,
there exists a constant $C=C(T,C_0(\nu),C_3(\nu),C_g,C_h)$ such that
$$
\EE\sup_{0\le t\le T} |u(t)|_{L^2 }^2 \le  C\, (1+\EE |u(0)|_{L^2 }^2)
$$
and
for any $T>0$ there exists a constant  $C=C(T,C_0(\nu),C_1(\nu), C_g,C_h)>0$ such that
$$
\EE {\sup_{t\in \INT\cap\mathbb{Q}}  \CH(u(t))} \le  C\, (1+ \EE \CH(u(0))).
$$
\end{theorem}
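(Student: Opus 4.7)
The plan is to recover Theorem \ref{main} by the now-standard compactness plus Skorokhod representation scheme, adapted to the Poisson noise; since the statement is quoted from \cite{meandanne}, my proof would follow the same architecture.

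First I would regularize the problem in two independent ways. On the deterministic side, I would smooth the focusing/defocusing nonlinearity by $\chi_R(|u|)|u|^{\alpha-1}u$ with a compactly supported cutoff $\chi_R$, and I would project onto an increasing family of finite-dimensional spaces (either spectral Galerkin for $-\Delta+|x|^2$, which has compact resolvent and respects the weight needed for $C_2(\nu)$-estimates, or a frequency-truncation via $\CT(t)$). On the noise side, I would restrict $\nu$ to $\nu_n:=\nu|_{S_n}$, so that the resulting Poisson integral has finitely many jumps on $\INT$ and the equation reduces between jumps to a well-posed deterministic NLS, which I can solve by the classical Ginibre--Velo theory; at each jump time the solution is multiplied by $1-ig(z(\cdot))$. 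This produces approximating solutions $u^n$ on a fixed stochastic basis.

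Next I would derive a priori estimates uniformly in $n$. Applying the It\^o formula for Banach-valued jump processes to $u\mapsto\mathcal{E}(u)$, the deterministic NLS contribution vanishes by unitarity of $\CT$ and the structure of the nonlinearity, while the stochastic integral produces a drift of the form $\int_S\lk(|u(1-ig(z))|^2-|u|^2\rk)\nu(dz) +2\mathrm{Re}\int_S u\overline{(-ih(z)u)}\nu(dz)$. Expanding and using hypothesis (III) $|\mathrm{Im}\,g|,|\mathrm{Im}\,h|\lesssim|\cdot|^2$ together with (I) and $C_0(\nu),C_3(\nu)<\infty$, the critical first-order imaginary parts cancel and one is left with a Gronwall inequality yielding $\EE\sup_{t\le T}|u^n(t)|_{L^2}^2\le C(1+\EE|u_0|^2_{L^2})$. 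For the energy I would apply It\^o to $\mathcal{H}$: the deterministic part is preserved, and the jump contribution is controlled by $C_0(\nu)$, $C_1(\nu)$, the Sobolev embedding $H^1\hookrightarrow L^{\alpha+1}$ valid for $\alpha$ in the stated subcritical range, and again the cancellation coming from~(III); Gronwall then gives the second bound. Finally, testing with $|x|^2$ and using $C_2(\nu)$ and the assumption $\int|x|^2|u_0|^2\,dx<\infty$ propagates the weighted estimate, which will be essential for spatial tightness.

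Then I would verify tightness of $\mathrm{Law}(u^n,\eta^n)$ on $D(\INT;H^\gamma_2(\RR^d))\times M_{\bar\NN}(\{S_n\times\Int\})$ for some $\gamma<1$. Uniform $H^1$ bounds plus the weight $C_2$ give compactness in $H^\gamma_2$ via Rellich on balls and weighted tightness at infinity; temporal regularity comes from Aldous's criterion applied to the mild formulation, since each stochastic term admits the standard stopping-time estimate using $M$-type $2$ of $L^2$ (or the martingale inequality from Definition \ref{def-mart-intext}). By the Skorokhod representation theorem (in the Jakubowski form sufficient for non-metric spaces, or after noting that the relevant spaces are Polish by Lemma \ref{measure_lemma}), I pass to a new probability space on which $(\tilde u^n,\tilde\eta^n)\to(\tilde u,\tilde\eta)$ a.s.; Proposition \ref{ehpro} and Lemma \ref{poiss_uni_distr} justify working with the augmented filtration and identify $\tilde\eta$ as a time-homogeneous Poisson random measure with intensity $\nu$. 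Passing to the limit in the mild equation uses strong $L^2_{\mathrm{loc}}$ convergence of $\tilde u^n$ (via compactness of $H^1\embed L^2_{\mathrm{loc}}$) for the nonlinear term, and standard convergence lemmas for stochastic integrals with respect to Poisson measures for the noise terms; Fatou then transfers the a priori bounds to $\tilde u$.

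The main obstacle I anticipate is passing to the limit in the nonlinearity $|u|^{\alpha-1}u$: weak compactness in $H^1$ is not enough because this map is not weakly continuous, so one needs local strong convergence uniformly in $x$, which is exactly why the weighted estimate based on $C_2(\nu)$ and the moment $\int|x|^2|u_0|^2\,dx<\infty$ is introduced. A secondary difficulty is handling the imaginary parts of $g,h$ so as to obtain true cancellations in the It\^o expansion of $\mathcal{E}$ and $\mathcal{H}$; the precise form of hypothesis~(III) is tailored to make this work.
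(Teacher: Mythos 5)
The paper does not actually prove Theorem \ref{main}: it is quoted verbatim from \cite{meandanne} ("For the sake of completeness, we state here the main result of the article"), so there is no in-paper proof to compare against. That said, your sketch reproduces exactly the architecture that the cited proof uses and that the present paper's surrounding text corroborates: compactness arguments on a martingale-solution level (the paper says explicitly that the probability space "gets lost"), finite-activity truncation of the noise via the sets $S_n$, mass and energy identities via the It\^o formula whose jump corrections are governed by conditions (I)--(III) --- which is visible in the stated dependence of the constants on $C_0(\nu),C_3(\nu)$ for the $L^2$ bound and on $C_0(\nu),C_1(\nu)$ for the energy bound --- a weighted variance estimate driven by $C_2(\nu)$ and $\int|x|^2|u_0|^2\,dx<\infty$ for spatial tightness, and Skorokhod representation with a limit that is c\`adl\`ag only in $H^\gamma_2$ for $\gamma<1$ (consistent with the $\sup$ over $\INT\cap\mathbb Q$ in the energy bound). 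One small imprecision: the first-order terms $2\,\mathrm{Im}(g)\,|u|^2$ in the mass balance do not cancel; hypothesis (III) makes them of order $|z|^2$ so that they are merely integrable against $\nu$ via $C_0(\nu)$, which is what closes the Gronwall argument.
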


The proof of Theorem \ref{main} uses compactness arguments, hence, as mentioned before, the existence of a solution is shown,  but no uniqueness of the solution.
Here, we are interested in the uniqueness of the solution to Equation \eqref{itoeqn}.
However, similar to the concept of solutions, there exist several concepts of uniqueness.

\begin{definition}
The equation \eqref{itoeqn} is {\sl pathwise unique} if, whenever $(\Omega,\CF,(\CF_t)_{t\in \INT}, \PP, \eta, u_i)$, $i=1,2$ are solutions to \eqref{eq_1} such that $\PP\lk( u _1 (0)=u _2 (0)\rk)=1$, then $\PP\lk( u _1 (t)=u _2 (t)\rk)=1$ for every $0\le t\le T$.
\end{definition}

Under certain conditions pathwise  uniqueness of the stochastic Schr\"odinger equation driven
by L\'evy noise can be shown.

\begin{theorem}\label{main_ex_u}
Let us assume that \del{\DEQSZ\label{assonZ}
 C_0(\nu)<\infty
 \EEQSZ and} $g,h:\RR\to\mathbb{C}$ are Lipschitz continuous.
Let us assume that
\DEQSZ\label{condalpha2}
  \bcase 1\le \alpha<\infty   \quad& \mbox{if}  \quad  d=1,\mbox{ or } 2,\\
 1\le \alpha<\tfrac{d}{d-2}& \mbox{if}\quad  d>2.
\ecase
\EEQSZ

Let be given a filtered probability space $\mathfrak{A}=(\Omega,\CF,\BF, \PP)$, with  filtration $\BF=(\CF_t)_{t\in \INT}$, a Poisson random measure defined on $\mathfrak{A}$
adapted to the filtration $\BF$, and
two mild solutions $u_1$ and $u_2$ to equation \eqref{itoeqn} over $\mathfrak{A}$,
on $[0,T]$ such that 
{$u_1$ and $u_2$ are c\`adl\`ag in $L^2(\RR^d)$.
If there exists some $\delta\in\RR$ with
\DEQSZ\label{givendelta}
\delta >\bcase \frac d2-\frac d{2(\alpha-1)}, & \mbox{ if } d=1,2,\\
 \frac d2-\frac 1 {\alpha-1} , & \mbox{ if } d>2,
\ecase
\EEQSZ
and the solutions $u_1$ and $u_2$ are in $\DD([0,T];H^{\delta}_2(\RR^d ))$,}
then $u_1$ and $u_2$ are indistinguishable in $L^2(\RR^d)$. 
\end{theorem}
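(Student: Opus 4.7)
Set $w = u_1 - u_2$. Since both solutions are driven by the same Poisson random measure $\eta$ on the same stochastic basis $\mathfrak A$ and share the same initial datum $u_0$, subtracting the two mild equations gives $w(0)=0$ almost surely and
\begin{align*}
w(t) = -i\lambda \int_0^t \CT(t-s)\bigl[|u_1|^{\alpha-1}u_1 - |u_2|^{\alpha-1}u_2\bigr]\,ds &- i\int_0^t\!\!\int_S \CT(t-s)[w(s-)G(z)]\,\tilde\eta(dz,ds) \\
&- i\int_0^t\!\!\int_S \CT(t-s)[w(s)H(z)]\,\nu(dz)\,ds,
\end{align*}
the noise terms depending linearly on $u$. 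The strategy is to apply the It\^o formula for Hilbert-space-valued jump processes to $\phi(w)=|w|_{L^2}^2$, produce a Gronwall-type inequality after localising in the $H^\delta$ norm by a stopping time, and conclude that $|w|_{L^2}$ vanishes identically.

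Applying It\^o's formula to $|w(t)|_{L^2}^2$, three features are used. First, $-i\Delta$ is skew-adjoint on $L^2(\RR^d)$ (equivalently, $\CT$ is unitary), so the dispersive term contributes nothing. Second, at an atom $(z,s)$ of $\eta$, $w$ jumps to $w(s-)(1 - ig(z(\cdot)))$, producing the jump $\int|w(s-)|^2(|g(z(x))|^2 + 2\,\mathrm{Im}\,g(z(x)))\,dx$ in $|w|_{L^2}^2$; subtracting the first-order It\^o drift $2\int|w|^2\,\mathrm{Im}\,g(z)\,dx$ and combining with the drift from $H(z)$, the compensator contribution of the linear noise terms is
\[ \int_S\!\!\int_{\RR^d}\bigl[|w|^2|g(z(x))|^2 + 2|w|^2\,\mathrm{Im}\,h(z(x))\bigr]\,dx\,\nu(dz)\,dt, \]
which, by the Lipschitz hypothesis on $g,h$ (together with $g(0)=h(0)=0$) and the integrability $C_0(\nu)<\infty$, is bounded by $C|w|_{L^2}^2\,dt$ plus a local martingale arising from the $\tilde\eta$-integral. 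Third, the nonlinear term contributes $-2\lambda\,\mathrm{Im}\int\bar w\bigl[|u_1|^{\alpha-1}u_1 - |u_2|^{\alpha-1}u_2\bigr]\,dx$, of modulus at most $C\int(|u_1|^{\alpha-1}+|u_2|^{\alpha-1})|w|^2\,dx$. The hypothesis on $\delta$ is (strictly) the continuous Sobolev embedding $H^\delta\hookrightarrow L^{d(\alpha-1)}(\RR^d)$ for $d>2$, respectively $H^\delta\hookrightarrow L^{2(\alpha-1)}(\RR^d)$ for $d=1,2$; combining this embedding with H\"older's inequality and a Gagliardo-Nirenberg interpolation on the $|w|$-factor produces
\[ \int(|u_1|^{\alpha-1}+|u_2|^{\alpha-1})|w|^2\,dx \le C\,(|u_1|_{H^\delta}+|u_2|_{H^\delta})^{\alpha-1}\,|w|_{H^\delta}^{\beta}\,|w|_{L^2}^{2-\beta}, \]
for a scaling-determined $\beta\in[0,2)$.

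Introduce the stopping time $\tau_N=\inf\{t\in[0,T]:|u_1(t)|_{H^\delta}+|u_2(t)|_{H^\delta}\ge N\}\wedge T$; since $u_i\in\DD([0,T];H^\delta)$ a.s., $\tau_N\uparrow T$ as $N\to\infty$ almost surely. On $[0,\tau_N]$, all $H^\delta$-norms are bounded by $2N$, so the nonlinear contribution becomes $\le C_N|w|_{L^2}^{2-\beta}$. Taking expectation of the It\^o identity (the local martingale vanishing by a standard stopping-time argument) gives, for $y(t)=\EE|w(t\wedge\tau_N)|_{L^2}^2$ with $y(0)=0$, a Gronwall/Bihari-type inequality forcing $y\equiv 0$ on $[0,T]$. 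Sending $N\to\infty$ and invoking the c\`adl\`ag property of $w$ in $L^2$ yields indistinguishability of $u_1$ and $u_2$ in $L^2(\RR^d)$. The main obstacle is the nonlinear estimate in the borderline regime: the threshold on $\delta$ is calibrated exactly so that the H\"older/Gagliardo-Nirenberg interplay produces a closed inequality, and when $\beta>0$ one must invoke finer Strichartz-type space-time estimates (exploiting the $L^2$-isometry property of the Schr\"odinger group) to close the argument truly linearly in $|w|_{L^2}$.
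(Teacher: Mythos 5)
Your overall scheme (localisation by an $H^\delta_2$ stopping time, a Gronwall argument for $\EE|u_1-u_2|^2_{L^2}$, and the observation that the stochastic and drift terms are linear in $u$ and are controlled by the Lipschitz continuity of $g,h$ together with $C_0(\nu)<\infty$) is in the spirit of the paper, and your treatment of the noise terms would be fine. The genuine gap is in the nonlinear term. You estimate $\int(|u_1|^{\alpha-1}+|u_2|^{\alpha-1})|w|^2\,dx$ by H\"older, the embedding $H^{\delta}_2\hookrightarrow L^{d(\alpha-1)}$ and Gagliardo--Nirenberg interpolation, arriving at $C\,(|u_1|_{H^\delta_2}+|u_2|_{H^\delta_2})^{\alpha-1}|w|_{H^\delta_2}^{\beta}|w|_{L^2}^{2-\beta}$. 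Under hypothesis \eqref{givendelta} one has $\delta<d/2$ in the relevant range, so $H^\delta_2\not\hookrightarrow L^\infty$ and $\beta>0$ is unavoidable. Even after bounding $|w|_{H^\delta_2}\le 2N$ on $[0,\tau_N]$, the differential inequality for $y(t)=\EE|w(t\wedge\tau_N)|^2_{L^2}$ reads $y'\le Cy+C_N\,y^{1-\beta/2}$, and the map $s\mapsto s^{1-\beta/2}$ fails the Osgood condition for $\beta>0$: the problem $y'=C_Ny^{1-\beta/2}$, $y(0)=0$, has the nontrivial solution $y(t)=(cC_N t)^{2/\beta}$, so the Bihari step does not force $y\equiv 0$. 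Your closing remark that one ``must invoke finer Strichartz-type estimates'' concedes exactly the point that has to be proved, and it cannot be carried out in the framework you set up: Strichartz estimates control the deterministic convolution $t\mapsto\int_0^t\CT(t-s)f(s)\,ds$ and therefore live in the mild formulation; there is no way to insert them into an It\^o-formula identity for $|w(t)|^2_{L^2}$. (A secondary issue is that applying the It\^o formula to $|w|^2_{L^2}$ for mild solutions, which are not semimartingales, itself requires a regularisation argument that you do not supply.)

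The paper avoids all of this by never invoking the It\^o formula: it subtracts the two mild equations, estimates the differences of the stochastic convolutions directly in $\EE|\cdot|^2_{L^2}$ (using unitarity of $\CT$ and the isometry-type bound for the compensated Poisson integral), and treats the nonlinear convolution with the inhomogeneous Strichartz estimate for an admissible pair $(\rho,\gamma)$, followed by H\"older in space with $1/\sigma+1/2=1/\gamma'$, the embedding $H^\delta_2\hookrightarrow L^{\sigma(\alpha-1)}$ and the stopping-time bound $|u_i(s)|_{H^\delta_2}\le m$; the choice $r=2/\rho'$ turns this into a bound by $m^{2(\alpha-1)}\int_0^T|(\bar u_1-\bar u_2)1_{[0,\tau_m)}(s)|^2_{L^2}\,ds$, i.e.\ a genuinely linear Gronwall inequality. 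It also first extends $u_1,u_2$ beyond $\tau_m$ by solutions of the linear equation so that the truncated mild equation holds on all of $[0,T]$, rather than stopping naively. To repair your argument you would have to either strengthen the hypothesis to $\delta>d/2$ (so that $\beta=0$), or abandon the energy method for the nonlinear term and rerun the paper's Strichartz computation in the mild formulation.
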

\begin{remark}
Condition  \eqref{condalpha2} is needed to apply the Strichartz estimate and Sobolev embedding theorems, in order to handle
the nonlinearity. The restriction compared to usual conditions is due to the fact that we have to
estimate the difference of solutions in $L^2$, in order to tackle the stochastic terms.
\end{remark}

\begin{proof}[Proof of Theorem \ref{main_ex_u}]

{Note that the solutions given by Theorem  1.2 of \cite{meandanne} satisfy the assumption of the Theorem \ref{main_ex_u}.
Moreover, if $u_1$ and $u_2$ are as above, then they belong a.s. to
$L^{\infty}([0,T]; H_2^{\delta}(\RR^d))$.}
\del{ In particular, there exists a constant $C=C(T)>0$ such that
\DEQSZ\label{finiteinlinfty}
\EE \sup_{0\le t\le T}|u_i(t)|^2_{H^1_2} &\le & C.
\EEQSZ}
%
%

\medskip

In the first step we will introduce a family of stopping times $\{\tau_m:m\in\NN\}$
and show that on the time interval $[0,\tau_m]$ the solutions $u_1$ and $u_2$ are indistinguishable.
In the second step, we will show that $\PP\lk( \tau_m<T\rk)\to 0$  for $m\to\infty$. From this follows that $u_1$ and $u_2$ are indistinguishable on the time interval $[0,T]$.

\medskip

\paragraph{\bf Step I:}
Let us introduce the stopping times  $\{ \tau^1_m:m\in\NN\}$ and  $\{ \tau^2_m:m\in\NN\}$  given by
$$
\tau^i_m:=\sup\{ s>0: |u_i(s)|_{H^ \delta_2}<m\}\wedge T,\quad i=1,2.
$$
The aim is to show
that $u_1$ and $u_2$ are indistinguishable on the time interval $[0,\tau_m]$, {with $\tau_m=\inf(\tau^1_m, \tau^2_m)$.}

\medskip
%
Fix  $m\in\NN$.
To get uniqueness on $[0,\tau_m]$ we first stop the original solution processes at time $\tau_m$ and extend the processes $u_1$ and $u_2$
by other processes to the whole interval $[0,T]$.  For this propose,
let $y_1$ be a solution to
\DEQSZ\label{eq1}
 y_1(t) &=& \CT(t-\tau_m ) u_1(\tau_m) 
-
i \int_{\tau_m}^t \int_S \CT(t-s) y_1 (s) \, G(z)\,\tilde \eta(dz,ds)
\\
&&{}-i\int_{\tau_m}^t \int_S \CT(t-s) y_1 (s) H(z)\gamma(dz,ds)
,\quad t\ge \tau_m ,\nonumber  \EEQSZ
and let $y_2 $ be a solution to
\DEQSZ\label{eq2}
 y_2 (t) &= &\CT(t-\tau _m) u_2 (\tau _m) -
i\int_{\tau_m}^t \int_S \CT(t-s) y_2 (s) \,G(z)\,\tilde
\eta(dz,ds)
\\
&&{}-i\int_{\tau_m}^t \int_S \CT(t-s) y_2 (s) H(z)\gamma(dz,ds),\quad t\ge \tau_m
. \nonumber  \EEQSZ
Since $u_1$ and $u_2$ are \cadlag in $L^ 2(\RR^d)$, $u_1(\tau_m)$ and $u_2(\tau_m)$ are well defined and 
 belong $\PP$--a.s.\ to $L^2 (\RR^d)$.
Since, in addition, $(\CT(t))_{t\in\RR}$ is a strongly continuous group on $L^2 (\RR^d)$, 
the existence of unique solutions $y_1$ and $y_2 $ to \eqref{eq1} and \eqref{eq2} in $L^2(\RR^d)$
can be shown by standard methods.
Now, let us define two  processes $\bar u_1 $ and $\bar u_2$ which are
equal to $u_1 $ and $u_2 $ on the time interval $[0,\tau_m)$ and
follow the linear Schr\"odinger equation $y_1$ and $y_2$ afterwards.
In particular, let
$$
\bar u_1  (t) = \bcase u_1(t) & \mbox{ for } 0\le t< \tau_m,\\
y_1 (t) & \mbox{ for } \tau_m\le  t \le T,\ecase
$$
and
$$
\bar u _2  (t) = \bcase u_2 (t) & \mbox{ for } 0\le t< \tau_m,\\
y_2 (t) & \mbox{ for } \tau_m\le  t \le T.\ecase
$$
%
Note, that $\bar u_1$ and $\bar u_2$ solve
the truncated
equation corresponding  to  \eqref{eq_1}, that is
\begin{equation}
\begin{array}{l}
u(t)
 = \CT(t) u_0
 +  i\lambda \int_0^ t 1_{[0,\tau_m)}(s)\, \CT(t-s)
\lk( \lk|u(s)\rk|^{\alpha-1} \,
 u(s)\rk)\, ds\phantom{\Big|}
\\
\qquad {}- i \int_0^t\int_S  \CT(t-s) u(s)\, G(z)\, \tilde \eta(dz,ds)
\\
\qquad {}- i \int_0^t\int_S  \CT(t-s) u(s)\, H(z)  \, \gamma(dz, ds)\phantom{\Big|}.
\end{array} 
\end{equation}
For $u_0\in L^2 (\RR^d)$ and $\xi\in \CN^2 (\Omega;L^2 ([0,T];L^2 (\RR^d)))$ progressively measurable with respect to the filtration $\BF$, 
let us define the integral operator
\DEQSZ\label{mZ}
\\
\nonumber (\mathfrak{Z}\xi)(t):= (\mT u_0)(t)+
\lk(\mathfrak{F}_{\tau_m}\xi\rk) (t)+\lk( \mathfrak{S}\xi \rk)(t)+\lk( \mathfrak{H}\xi\rk)(t),\quad t\in \INT,
  \EEQSZ
where
$\mT$ 
is defined by
$$(\mT u_0)(t) := \CT(t) u_0,
\quad  u_0\in L^ 2(\RR ^d), \quad t\in \INT,$$
the integral operator $\mathfrak{F}_{\tau_m}$ with respect to the nonlinear term  is defined by
\DEQS
( \mathfrak{F}_{\tau_m}  \xi )(t)&=&i\lambda \int_0^ {t} \CT({t}-s) \lk( |
\xi(s)|^{\alpha-1} \xi(s)\rk)\, 1_{[0,\tau_m)}(s)\, ds, \quad t\in[0,T]  ,
\EEQS
$\mG$ is defined by
\DEQS
(\FG \xi)(t)&=& 
-i \int_0^ t \int_S \CT(t-s)
\xi(s) \, G( z)\; \tilde \eta(dz,ds), \quad t\in[0,T]  ,
\EEQS
and $\mathfrak{H}$ is defined by
$$(\mathfrak {H} \xi) (t) := -i\int_0^t\int_S \CT(t-s)\xi (s) H(z) \gamma(dz,ds), \quad t\in[0,T]  .
$$
In the next step we will calculate the difference $\bar u_1 -\bar
u_2 $. 
%
%
Fix $0\le t\le T$.  Similarly as before  we have
\DEQS \lqq{ \bar u_1 (t)- \bar u_2(t) =  (\mS  \bar u_1)(t)- (\mS  \bar u_2)(t)}&&
 \\
&&
+
(\mathfrak{H}  \bar u_1)(t)- (\mathfrak{H}  \bar u_2)(t)
+(\mF_{\tau_m}  \bar u_1)(t)- (\mF_{\tau_m} \bar  u_2)(t)
. \EEQS
Note, that
 $\mathfrak{G}$ and $\mathfrak{H}$ are linear.
In addition, $(\CT(t))_{t\in\RR}$ is  a unitary group on $L^2(\RR^d )$.
Therefore,
\DEQS
\EE \lk| \mathfrak{G}u_1(t)-\mathfrak{G}u_2(t)\rk|^2 _{L^2 } &\le &
\int_0^t \int_S \EE \lk|( \bar u_1(s)- \bar u_2(s)) G(z) \rk|^2_{L^2}  \,\nu(dz)\,  ds  .
\EEQS
Due to the integrability conditions on page \ref{integrabilitycondition}, i.e.\ the integrability condition i.),
 and the fact that $g$ is Lipschitz continuous, we know
that for any $v\in L^2(\RR^d)$ we have  $ \int_S |v\,G(z)|_{L^2}\nu(dz) \le C_g C_0(\nu)\, |v|_{L^2 }$ and we can proceed
\DEQS
\EE \lk| \mathfrak{G} \bar u_1(t)-\mathfrak{G} \bar u_2(t)\rk|^2 _{L^2 } &\le &
C_g C_0(\nu)   \int_0^t  \EE \lk| \bar u_1(s)- \bar u_2(s)  \rk|^2_{L^2}  \, ds.
\EEQS
Similarly, we get for  $\mathfrak{H}$ by the Minkowski inequality and the Lipschitz continuity of $h$,
\DEQS
\EE \lk| \mathfrak{H} \bar u_1(t)-\mathfrak{H} \bar u_2(t)\rk|^2 _{L^2 } &\le &
C_h  \, t  \int_0^t \int_Z \EE \lk|( \bar u_1(s)- \bar u_2(s)) H(z) \rk|^2_{L^2}  \,\nu(dz)\,  ds
\\
&\le&C\, C_h C_0(\nu)  T \int_0^t  \EE \lk| \bar u_1(s)- \bar u_2(s)  \rk|^2_{L^2}  \, ds.
\EEQS
The only term, which has to be carefully  analysed is the nonlinear term given by
\DEQS
\lqq{(\mF_{\tau_m}  \bar u_1)(t)- (\mF_{\tau_m} \bar  u_2)(t)}
&&
\\
 &=&i\lambda \int_0^ {t} \CT({t}-s) \lk( |\bar
u_1(s)|^{\alpha-1} \bar u_1(s)-|\bar u_2(s)|^{\alpha-1}
\bar u_2(s)\rk)\, 1_{[0,\tau_m)}(s)\, ds
.
\EEQS
%
%
%
%
Let $\gamma'$ and $\sigma$ be given such that $\tfrac 1\sigma+\tfrac 12=\tfrac 1 {\gamma'}$,
\DEQSZ\label{cccl}
-\delta + \frac d2<\frac d {\sigma(\alpha-1)},
\EEQSZ
and
\DEQSZ\label{gammaprime}
2\ge \gamma' \,\,\bcase \ge  1 \quad& \mbox{if}  \quad  d=1,\\ > 1 
& \mbox{if}\quad  d=2,
\\ \ge  {2d\over d+2}  & \mbox{if}\quad  d>2.
\ecase
\EEQSZ
Due to our assumption on $\alpha$ and $\delta$, such a couple $(\gamma',\sigma)$ exists.
Let $(\gamma,\rho)$ be an admissible pair, i.e.\ $2/\rho=d(1/2-1/\gamma)$.   
Let   $\gamma$ be the  conjugate exponent to $\gamma'$, and
$\rho'$ the conjugate exponent to $\rho$.
 Then 
\DEQSZ\label{rhoprime2}
2\le \gamma\,\, \bcase \le \infty  \quad& \mbox{if}  \quad  d=1,\\ <\infty  
& \mbox{if}\quad  d=2,
\\  \le \frac {2d}{d-2}  & \mbox{if}\quad  d>2,
\ecase
\quad \mbox{and}&\quad
1\le \rho' \,\,\bcase \le \frac{4}3 \quad& \mbox{if}  \quad  d=1,\\ <{2} 
& \mbox{if}\quad  d=2,
\\  \le 2  & \mbox{if}\quad  d>2.
\ecase
\EEQSZ
Before continuing, let us shortly introduce the Strichartz estimate.
Let us define the convolution operator
\DEQSZ\label{det-con}
\mT u (t) := \int_0^t \CT(t-r) u(r)\, dr, \quad t\ge 0.
\EEQSZ
Let $(p_0,q_0), (p_1,q_1)\in [2,\infty)\times[1,\infty)$ be two admissible pairs.
Then for all $T>0$  we have by the Strichatz estimate (see  \cite[p.\ 64]{Ponce})
$$
\lk| \mT u \rk| _{L^{q_0}  (0,T;L^{p_0})} \le
C \lk|  u \rk| _{L^{q_1'}  (0,T;L^{p_1'})}.
$$
%
Applying the  Strichartz estimate  we have, {for any $r\ge 1$,} 
\DEQS
\lqq{ \sup_{0\le t\le T} \lk| 
\int_0^ {t} \CT({t}-s)\lk( |\bar
u_2(s)|^{\alpha-1} \bar u_2(s)-|\bar u_1(s)|^{\alpha-1}\bar u_1(s)\rk)\, 1_{[0,\tau_m)}(s)\, ds
\rk|^{\rho'r}
_{L^2}} &&
\\
&\le&
  \lk|\lk(|\bar
u_2(s)|^{\alpha-1} \bar u_2(s)-|\bar u_1(s)|^{\alpha-1}\bar u_1(s)\rk)1_{[0,\tau_m)}(s)\rk|_{L^ {\rho'}([0,T];L^{ \gamma'}) } ^{\rho'r}.
\EEQS
The H\"older inequality gives for $\sigma$ with $\frac 1 \sigma+\frac 12 =\frac 1{\gamma'}$, i.e.\
\DEQSZ\label{gammaprime1}
\sigma \,\,\bcase \ge  2 \quad& \mbox{if}  \quad  d=1,\\ >2 
& \mbox{if}\quad  d=2,
\\ \ge  d & \mbox{if}\quad  d>2,
\ecase
\EEQSZ
\DEQS
\ldots &\le&
\lk( \int_0^T   \lk|(|\bar
u_2(s)|^{\alpha-1} +|\bar u_1(s)|^{\alpha-1})1_{[0,\tau_m)}\rk| _{L^\sigma}^{\rho'}  \lk| (\bar u_1(s)-\bar u_2(s)) 1_{[0,\tau_m)}(s) \rk|_{L^2 }^{\rho'}
\, ds\rk) ^{r} 
\\
&\le&
\lk( \int_0^T   \lk|(|\bar
u_2(s)| +|\bar u_1(s)|)1_{[0,\tau_m)}\rk| _{L^{(\alpha-1) \sigma}}^{(\alpha-1)\rho'}  \lk| (\bar u_1(s)-\bar u_2(s)) 1_{[0,\tau_m)}(s) \rk|_{L^2 }^{\rho'}
\, ds\rk) ^ r 
.
\EEQS
By inequality \eqref{cccl} and Sobolev embeddings Theorems  we know that
$H^ \delta_2(\RR^ d )\hookrightarrow L^ {\sigma(\alpha-1)}(\RR^ d )$ continuously.
This implies
\DEQS
\ldots &\le&
\lk( \int_0^T   \lk|(|\bar
u_2(s)| +|\bar u_1(s)|)1_{[0,\tau_m)}\rk| ^{(\alpha-1)\rho'} _{H^ \delta_2}  \lk| (\bar u_1(s)-\bar u_2(s)) 1_{[0,\tau_m)}(s) \rk|_{L^2  }^{\rho'}
\, ds\rk) ^r . 
\EEQS
By  the H\"older inequality 
{\DEQS
\ldots &\le&
 C(T)  \sup_{s\in [0,T]} \lk|(|\bar
u_2(s)| +|\bar u_1(s)|)1_{[0,\tau_m)}(s)\rk| _{H^ \delta_2}^{(\alpha-1)\rho' r} \,
\\
&&\hspace{2cm} {}\times  \int_0^T   \lk| (\bar u_1(s)-\bar u_2(s)) 1_{[0,\tau_m)} (s)\rk|_{L^2  }^{r\rho'}
\, ds . 
\EEQS}
The definition of the stopping time gives
\DEQS
\ldots &\le&
 m ^{(\alpha-1)\rho' r}   \int_0^T   \lk| (\bar u_1(s)-\bar u_2(s)) 1_{[0,\tau_m)} (s)\rk|_{L^2  }^{r\rho'}
\, ds . 
\EEQS
Now, taking $r=2/\rho'$, an application of the  Grownwall Lemma gives $\EE | \bar u_1(t)- \bar u_2(t)|^2_{L^2 }=0$. Since
$ \bar u_1$ and $ \bar u_2$ are \cadlag on  $L^2 (\RR^d)$,
 both processes
$ \bar u_1$ and $ \bar u_2$ are  indistinguishable in $L^2 (\RR^d)$ on the time interval $[0,\tau_m]$.


\medskip

\paragraph{\bf Step II:}
We show that $\PP\lk( \tau_m<T\rk) \to 0$ as $m\to\infty$.
Observe, that
it holds for $\delta$
$$
\{ \tau_m\le T \} \subset \{ |u_1|_{L^{\infty}([0,T];H^{\delta}_2)}\ge m \; \mbox{or} \; |u_2|_{L^{\infty}([0,T];H^\delta_2)}\ge m\}.
$$
Therefore,
$$
\PP\lk( \tau_m <  T \rk) \le \PP\lk( |u_1|_{L^{\infty} ([0,T];H^\delta_2)}\ge m \rk) + \PP\lk( |u_2|_{L^{\infty} ([0,T];H^\delta_2)}\ge m \rk).
$$
Since $u_1$ and $u_2$ are \cadlag in $H^\delta_2(\RR^d)$, and $\PP$-a.s.\ $ \sup_{0\le s\le T}|u_1(s)|_{H^\delta_2}<\infty$ and
$ \sup_{0\le s\le T}|u_2(s)|_{H^\delta_2}<\infty$, it follows
$$
\PP\lk(  |u_i|_{L^ \infty([0,T]; H^\delta_2)}\ge m \rk)\longrightarrow 0,
$$
as $m\to\infty$, for $i=1,2$. This implies  $\PP\lk( \tau_m \le  T \rk)\to 0$ as $m\to\infty$.
Hence,  both processes $u_1$ and $u_2$ are undistinguishable on $[0,T]$.
\end{proof}




%

Due to the pathwise uniqueness one can show that a unique strong solution exists.

\begin{theorem}\label{main_example}
If the conditions of Theorem \ref{main} and Theorem \ref{main_ex_u} are satisfied,
then there exists a unique strong solution to equation  \eqref{eq_1} in $\DD([0,T];L^2(\RR^d))$.
\end{theorem}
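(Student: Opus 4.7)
The plan is to deduce Theorem \ref{main_example} from the abstract Yamada--Watanabe--Kurtz principle of \cite{kurtz1}, using Theorem \ref{main} as the source of weak (martingale) existence and Theorem \ref{main_ex_u} as the source of pathwise uniqueness. In Kurtz's language, I would set up the triple: the \emph{input data} is $Y=(u_0,\eta)$ with values in the Polish space $L^{2}(\RR^{d})\times M_{\bar\NN}(\{S_{n}\times\II\})$ (Polish by Lemma \ref{measure_lemma}); the \emph{output} is the solution trajectory $X=u$ with values in the Polish space $\DD(\II;L^{2}(\RR^{d}))$ endowed with the Skorokhod topology; and the \emph{constraint} is that the pair $(Y,X)$ satisfy the mild equation \eqref{eq_1} in a progressively measurable/compatible way with respect to the natural filtration generated by $u_{0}$ and the restrictions $\eta|_{S\times(0,t]}$.

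First I would record that the notion of solution in Definition \ref{Def:mart-sol} matches Kurtz's notion of a \emph{compatible solution}: one checks that the filtration generated by $u(s)$, $s\le t$, and by $\eta|_{S\times(0,t]}$ is independent of $\eta|_{S\times(t,T]}$, which is immediate from Definition \ref{def-Prm}(iii) and the fact that the stochastic integrals appearing in \eqref{eq_1} are constructed in a progressively measurable way (Proposition \ref{ehpro} is what guarantees that we may freely pass to the augmented right-continuous filtration without disturbing these laws). Second, I would translate pathwise uniqueness (Theorem \ref{main_ex_u}) into Kurtz's \emph{pointwise uniqueness} on the joint canonical space: if $(u_{0},\eta,u^{1})$ and $(u_{0},\eta,u^{2})$ are two compatible solutions on a common stochastic basis sharing the same $Y=(u_{0},\eta)$, then $\PP(u^{1}=u^{2}\text{ in }\DD(\II;L^{2}(\RR^{d})))=1$; this is exactly the content of Theorem \ref{main_ex_u} after noting that the $H^{\delta}_{2}$-regularity required there is automatic for the solutions constructed in Theorem \ref{main} (as pointed out in that theorem's proof, taking $\delta=1$ works).

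Third, I would invoke Kurtz's abstract Yamada--Watanabe theorem (\cite{kurtz1}, Theorem 3.14 there, or the variant in \cite{kurtz2}): the combination of \emph{existence} of a joint law for $(Y,X)$ satisfying the constraint (supplied by Theorem \ref{main}) with \emph{compatible pointwise uniqueness} (supplied by Theorem \ref{main_ex_u}) yields both uniqueness in law and the existence of a measurable mapping $F:L^{2}(\RR^{d})\times M_{\bar\NN}(\{S_{n}\times\II\})\to\DD(\II;L^{2}(\RR^{d}))$ such that, on any stochastic basis $(\Omega,\CF,\BF,\PP)$ carrying a time homogeneous Poisson random measure $\eta$ with intensity $\nu$ and an $\CF_{0}$-measurable initial datum $u_{0}$, the process $u:=F(u_{0},\eta)$ is $\BF$-adapted and is the unique mild solution of \eqref{eq_1}. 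This is precisely the assertion of Theorem \ref{main_example}.

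The main obstacle, as the authors themselves flag in the introduction, is not the probabilistic content (which is handled by Theorems \ref{main} and \ref{main_ex_u}) but the bookkeeping of the abstract framework: one has to verify that the solution map, the constraint set, and the compatibility structure are Borel measurable between the appropriate Polish spaces. The Polishness of the measure space provided by Lemma \ref{measure_lemma}, together with the fact that $L^{2}(\RR^{d})$ and $\DD(\II;L^{2}(\RR^{d}))$ are Polish, is exactly what makes the translation into Kurtz's setting go through; the rest is a verification that mild-solution-hood can be written as a countable intersection of Borel conditions, which follows from separability and from the continuity of the operators $\mT$, $\mF_{\tau_{m}}$, $\FG$, $\mathfrak{H}$ appearing in the fixed-point formulation used in the proof of Theorem \ref{main_ex_u}.
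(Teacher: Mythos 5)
Your overall strategy coincides with the paper's: its proof of Theorem \ref{main_example} is precisely a translation of equation \eqref{eq_1} into the abstract framework of Section \ref{sec_abstr} followed by an application of Theorem \ref{thm513} (the paper's rendering of Kurtz's Theorem 3.14), with Theorem \ref{main} supplying a nonempty Kurtz set and Theorem \ref{main_ex_u} supplying pointwise uniqueness. There is, however, a genuine gap at exactly the point you dismiss as bookkeeping. Kurtz's pointwise uniqueness must hold for \emph{every} pair of compatible solutions whose law lies in the constraint set, whereas Theorem \ref{main_ex_u} yields uniqueness only for solutions whose paths lie in $\DD([0,T];H^{\delta}_2(\RR^d))$ for some $\delta$ satisfying \eqref{givendelta}. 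If, as you propose, ``solution'' simply means ``c\`adl\`ag $L^2$-valued mild solution'' on the output space $\DD([0,T];L^2(\RR^d))$, nothing excludes from the Kurtz set the laws of solutions lacking the $H^{\delta}_2$ regularity, and for those Theorem \ref{main_ex_u} is silent, so the implication (a)$\Rightarrow$(b) of Kurtz's theorem cannot be invoked. The paper closes this by building the regularity into the constraint itself: the functional $\theta^1_0$ is defined to equal $+\infty$ off $\DD([0,T];H^{\delta}_2(\RR^d))$, and Hypothesis \ref{hyp_solution} forces every admissible solution to satisfy $\PP(\theta^1_0(u)<\infty)=1$. Your remark that the regularity is ``automatic for the solutions constructed in Theorem \ref{main}'' does not substitute for this, since uniqueness must be tested against all solutions in the class, not only the constructed ones. (A further small slip: Theorem \ref{main} gives paths c\`adl\`ag in $H^{\gamma}_2(\RR^d)$ only for $\gamma<1$, so one cannot take $\delta=1$; one takes $\delta<1$ satisfying \eqref{givendelta}, which \eqref{condalpha2} permits.)

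A second, smaller omission concerns the choice of spaces. Because $F_\alpha(u)=|u|^{\alpha-1}u$ does not map $L^2(\RR^d)$ into $L^2(\RR^d)$, the paper does not pose the constraint as the mild equation in $L^2$; it tests the equation against $\varphi\in Y_0=\mathscr D(\RR^d)$ and lets the coefficients $a$, $b$, $c$ take values in the large space $Y=H^{-30d}_2(\RR^d)$, with $X=H^1_2(\RR^d)$ as the state space on which $F_\alpha$ is defined and continuous into $Y$ (via $H^1_2\hookrightarrow L^{\alpha+1}$ and $L^{(\alpha+1)/\alpha}\hookrightarrow Y$). This is what makes the Borel measurability of the constraint verifiable; your plan to express mild-solution-hood as a countable intersection of Borel conditions directly on $\DD([0,T];L^2(\RR^d))$ runs into the problem that the relevant operators are not even defined on all of that space.
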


\begin{proof}
Let $\delta<1$ be the constant given in Theorem \ref{main_ex_u}.
In order to apply the Theorem \ref{thm513} below, {we put} $Y=H^{-30 d}_2(\RR^d )$, {$Y_0=\mathscr D(\Bbb R^d)$},
 $X=H^ 1_2 (\RR^d)$, and $$\CX:= \DD([0,T];L^2 (\RR^d)).$$
We have chosen $Y=H^{-30 d}_2(\RR^d )$, since $Y$ can be arbitrarily large, only we have to be sure that following points has to be satisfied
and these points are satisfied by this choice.
\noindent
The functions
\begin{itemize}

\item $a:{[0,T]}\times X\to Y$,
\item $b:{[0,T]}\times{[0,T]}\times X\to Y$,
\item $c:{[0,T]}\times{[0,T]}\times S\times X\to Y$,
\item {$\theta^{\alpha_i}_i:\Bbb D([0,T],Y)\to[0,\infty]$, $\alpha_i\in A_i$, $i\in\{0,1\}$, }
\item $\{ \Xi_t:t\in [0,T] 
\}$, where $\Xi_t:\Bbb D([0,T],Y)\to[0,\infty]$
\end{itemize}
are defined by
\begin{itemize}
\item $a(t,x):= \CT(t) x,$ for $t\in[0,T]$ and $x\in X$;
\item $b(t,s,x) := \CT(t-s)\, 1_{[0,t)}(s)\,F_\alpha(x)+ \int_S  \CT(t-s) 1_{[0,t)}(s)\,{[ x  H(z)]}\nu(dz)$, where $F_\alpha(x)= |x|^{\alpha-1}x$,
for $s,t\in[0,T]$, $x\in X \subset L^{\alpha+1}(\RR^d)$, so that $F_\alpha(x) 
\in L^{\alpha+1/\alpha}(\RR^d)$;
\item $c(t,s,z,x) := \CT(t-s) \, 1_{[0,t)}(s)\,{(x \,G(z))},
$ for $s,t\in[0,T]$, $x\in X$ and $z\in S$;

\item $A_0=\{1\}$, $A_1=\emptyset$,
$$\theta^1_0:\Bbb D([0,T],Y) 
\ni u \mapsto \bcase \infty &\mbox{ if } u\not \in \DD(0,T;{H^\delta _2}(\RR^d))\\  1 &\mbox{ if } u\in \DD(0,T;{H^\delta _2}(\RR^d)), \ecase
$$
\item
{
For all $\phi\in Y^\ast$ and $t\in[0,T]$, $\Xi_t^\phi(u)=\infty$ for $u\in\Bbb D([0,T];Y)\setminus\Bbb D([0,T];L^2(\Bbb R^d))$ and, for $u\in D([0,T];L^2(\Bbb R^d))$ 
\DEQS
\lqq{
\Xi^\phi_t(u)=\int_0^ t\lk\{\la \CT(t-s)F_{\alpha}(u(s)),\phi\ra  +\int_{\{y\in S: |\la  \CT(t-s)  [u(s)\, G(y)],\phi\ra|<1\}}\la  \CT(t-s)  [u(s)\, G(y)],\phi\ra ^ p\rk.
}&&
\\
&&{}\lk. +\int_{\{y\in S: |\la  \CT(t-s)  [u(s)\, G(y)],\phi\ra|\ge 1 \}}
\la  \CT(t-s) [ u(s)\, G(y)], \phi \ra  +\la \CT(t-s) [ u(s)\, H(y)],\phi\ra \nu(dy)\right\} \, ds
\EEQS
for $u\in\Bbb D(0,T;L^2(\Bbb R^d))$, where $F_\alpha(x)$ is as above.


}
\end{itemize}

\begin{remark}
Roughly speaking, the setting in Theorem \ref{main_example} has to fit to the  setting in Section \ref{sec_abstr} and has to be chosen as follows.
The space $X$ has to be
a space such that for any $t\ge 0$, $u(t)$ is {an} $X$--valued random variable. Note, that this space need not coincide with the space where
the process is \cadlag.
The space $Y$ has to be chosen such that the mappings $a$, $b$ and $c$ are well defined, $\DD([0,T];Y)$ corresponds to the Kurtz space (denoted in his paper by {$Z_1$}),
and $\CX$ is the path space.
Additional regularity properties of the solution can be  incorporated in the family of functions
{$\{\theta_0^{\alpha_0}:\alpha_0\in A_0\}$ and $\{\theta_1^{\alpha_1}:\alpha_1\in A_1\}$, where $A_0$ and $A_1$} are  index sets. In our  case, we choose $A_1={\{1\}}$ and $A_2=\emptyset$.
To make sure that the integrals in \eqref{finiteintegrals}
are well defined, the family of functions {$\{\Xi_t:t\in(0,T],\phi\in Y^ \ast\}$} has to be defined in a proper way.
\end{remark}
\medskip

\begin{remark}
The setting need not to be unique. Thus, instead of the setting
above, we could chose $X=H^1_2(\RR^d)$, $\CX=\CX_0= D([0,T];H_2^ \delta(\RR^d))$, $A_1=\emptyset$, and $A_2=\emptyset$.
Indeed, due to the fact that given pathwise uniqueness in $L^ 2(\RR^d)$ with the condition $\PP\lk( u\in \DD([0,T];H_2^ \delta(\RR^d)\rk)=1$
one has pathwise uniqueness in $H^\delta_2(\RR^d)$ and we could take directly $\CX= D([0,T];H^ \delta_2(\RR^d))$.

\end{remark}
\medskip

In the next step, we have to verify that the mappings $a,b$ and $c$ are measurable.
In fact, first note  that $\CT$ is a strongly continuous unitary  group on $H^{-30d}_2(\RR^d )$, therefore, $a$ is measurable.
To show that $b$ is measurable, we first investigate the measurability of $\CT(t-s)\, 1_{[0,t)}(s)\,F_\alpha(x)$.
Here, we show that $F_\alpha:H^1_2 (\RR^d)\to Y$ is continuous, from which follows that $F_\alpha:H^ 1 _2 (\RR^d)\to Y$ is measurable.
Indeed, using H\"older inequality, it is easily seen that for any $x,y \in L^{\alpha+1}(\RR^d)$,
$$
|F_{\alpha}(x)-F_{\alpha}(y)|_{L^{\alpha+1/\alpha}(\RR^d)}\le C \lk( |x|_{L^{\alpha+1}(\RR^d)}^{\alpha-1}+|y|_{L^{\alpha+1}(\RR^d)}^{\alpha-1}\rk)
|x-y|_{L^{\alpha+1}(\RR^d)},
$$
and the continuity result follows from the embeddings $H^1_2(\RR^d)\hookrightarrow L^{\alpha+1}(\RR^d)$ and $L^{\alpha+1/\alpha} (\RR^d)\hookrightarrow  Y$.
Since $1_{[0,t)}(s)$ is measurable, and $(\CT(t))_{t\in\RR}$ is pointwise strongly continuous, we are done.
It remains to show that the second term of $b$ is measurable, but this follows since, by the Lipschitz continuity of $h$,
and the fact that $h(0)=0$, the  Nemitsky operator $H$ is continuous  from $L^2 (\RR^ d)$ to $Y$.
Similarly the last mapping $c$ can be handled.
It remains to verify that
$$\theta_0: Y 
\ni u \mapsto \bcase \infty &\mbox{ if } u\not \in \DD([0,T];{H^\delta _2}(\RR^d)),\\  1 &\mbox{ if } u\in \DD([0,T];{H^\delta _2}(\RR^d)),\ecase
$$
is a measurable mapping. But this is given, since $\DD([0,T];{H^\delta _2}(\RR^d))$ is a Borel subset of $\DD([0,T];Y)$.
Now, the existence of the strong solutions follows by an application of Theorem \ref{thm513}.

\end{proof}

\section{The abstract {uniqueness} result}\label{sec_abstr}

Let $X$ and $Y$ be separable Fr\'echet spaces, \refy{$X\hookrightarrow Y$ densely}, $Y_0\subseteq Y^*$ separates points in $Y$, $\mathcal X$ a Borel subset in $\Bbb D([0,T],Y)$, $A_1$ and $A_2$ are two index sets and
\begin{itemize}
\item $a:{[0,T]}\times X\to Y$, $b:{[0,T]}\times{[0,T]}\times X\to Y$,
\item $c:{[0,T]}\times{[0,T]}\times {S}\times X\to Y$,
\item {$\theta^{\alpha_i}_i:\Bbb D([0,T],Y)
\to[0,\infty]$, $\alpha_i\in A_i$, $i\in\{0,1\}$, }
\item {{$\{ \Xi^\phi_t:t\in[0,T],\phi\in Y^ \ast \}$, where $\Xi_t:\Bbb D([0,T],Y)\to[0,\infty]$.}}
\end{itemize}
measurable mappings, $\nu$ a $\sigma$-finite measure on $(S,\CS)$, and, finally, $S_n\in\CS$ such that $S_n\uparrow S$ and $\nu(S_n)<\infty$.

\del{In addition, let us define $$\CX_0:= \lk\{ u\in\CX: {\Xi_t(u)<\infty, \,\,t\in[0,T]} 
 \rk\}.
$$
\begin{remark}
The space $\mathcal X_0$ comprises some regularity properties of the solution. That is, in our example we will have { $\mathcal X_0=\Bbb D([0,T],H^ \delta_2 (\RR^d))$, $\delta$ given in \eqref{givendelta}, $Y=H^ {-30 d}_{2}(\RR^d)$ and $X=L^ 2 (\RR^d )$.}
\end{remark}
}

Let $\eta$ be a {time homogeneous} Poisson random measure {with the intensity measure $\nu$} on
{the space $(S,\CS)$} defined over a  probability space $\MA=(\Omega,\CF,\BF,\PP)$, where  $\BF$ denotes a  filtration $(\CF_t)_{t\in\INT}$.
Given is an abstract evolution equation of the following form:
\begin{eqnarray}
\langle u(t),\varphi\rangle&=&\langle a(t,u(0)),\varphi\rangle+\int_0^{t}\langle b(t,s,u(s)),\varphi\rangle\,ds\label{eq}
\\
&+&\int_0^{t}\int_S\langle c(t,s,x,u(s)),\varphi\rangle\, \tilde\eta(dx,ds),\nonumber
\end{eqnarray}
for every $t\in{[0,T]}$ and $\varphi\in{Y_0}$.
%
%
%
We define next the terminus of solution in the way we will use it in the following pages of the article.
\begin{definition}\label{def_solution}
We say that a $6$-tuple $(\Omega,\CF,\BF, \PP,  u,\eta )$, $\BF=(\CF_t)_{t\in \INT}$,
consisting of a filtered probability space $\mathfrak{A}=(\Omega,\CF,\BF, \PP)$, a time homogeneous Poisson random measure $\eta$ on $(S,\CS)$ over $\mathfrak{A}$ {with the intensity measure $\nu$} and a  process $u$ on $[0, T]$, being {$\BF$-adapted and} c\`adl\`ag in {$Y$},
is a solution of \eqref{eq} provided that
$$
\Bbb P( u(t)\in X) =1, \quad  \forall t\in{[0,T]},\quad\qquad\PP\lk( u\in{\mathcal X} \rk) =1,
$$
for all $t\in\INT$
\DEQSZ\label{finiteintegrals}
\lqq{ \int_0^{t}|\langle b(t,s,u(s)),\varphi\rangle|\,ds+
\int_0^{t}\int_{\{x\in S: |\langle c(t,s,x,u(s)),\varphi\rangle|< 1\}}|\langle c(t,s,x,u(s)),\varphi\rangle|^p\wedge 1\,\nu(dx)\,ds} &&
\\ \nonumber
&& +
\int_0^{t}\int_{\{ x\in S: |\langle c(t,s,x,u(s)),\varphi\rangle|\ge 1\}}  |\langle c(t,s,x,u(s)),\varphi\rangle|\,\nu(dx)\,ds <\infty\quad\textrm{$\PP$--a.s.}
\EEQSZ
hold for every $t\in[0,T],$ {$\varphi\in Y_0$, 
}and $u$ solves   equation \eqref{eq}.
\end{definition}
\begin{remark}
Additional regularity properties, which are not part of the definition of the solution, but 
which are essential for the  pathwise uniqueness, are incorporated by the additional mappings {$\theta^ {\alpha_0}_0$ and $\theta^ {\alpha_1}_1$, $\alpha_0\in A_0$, $\alpha_1\in A_1$}.
\end{remark}
\begin{hypo}\label{hyp_solution}
A solution satisfies the additional regularity properties given by  
{$\{\theta_0^{\alpha_0}:\alpha_0\in A_0\}$ and $\{\theta_1^{\alpha_1}:\alpha_1\in A_1\}$, where $A_0$ and $A_1$} are  index sets,
such that
\begin{equation}\label{hyp_solution-eqn}\tag{HYP}
\PP\lk(\theta_0^{{\alpha_0}}(u)<\infty\rk)=1, \quad\textrm{ and }\quad\Bbb E\,\theta_1^{{\alpha_1}}(u)<\infty.
\end{equation}
\end{hypo}

\begin{definition} If $\eta\in{M_{\bar{\Bbb N}}(\{S_n\times\Bbb R_+\})}$ then we define
\begin{equation}\label{etat}
\eta_t(V)=\eta(V\cap({S}\times(0,t])),\,\eta^t(V)=\eta(V\cap({S}\times(t,T] )),\,V\in{\mathcal S\otimes\mathcal B(\Int )}.
\end{equation}
\end{definition}

{
\begin{lemma}\label{adapttimhom} If $\eta$ is a time homogeneous Poisson random measure over a filtered probability space $\mathfrak{A}=(\Omega,\CF,\BF, \PP)$, then, for every $t\in \INT $, $\eta_t$  is an $\mathcal F_t$-measurable {$M_{\bar{\Bbb N}}(\{S_n\times\Bbb R_+\})$}-valued random variable and $\eta^t$ is  independent of $\mathcal F_t$.
\end{lemma}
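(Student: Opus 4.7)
The plan is to handle the two assertions separately, in both cases exploiting the fact that the Borel $\sigma$-algebra $\mathcal M_{\bar{\mathbb N}}(\{S_n\times\INT\})$ is by definition generated by the evaluation maps $i_B:\mu\mapsto\mu(B)$, $B\in\CS\otimes\mathcal B(\INT)$, so every measurability/independence statement about the random measures $\eta_t$ and $\eta^t$ reduces to the corresponding statement about the real-valued random variables $\eta_t(B)$ and $\eta^t(B)$.

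For the measurability of $\eta_t$, I would first check that for a rectangle $B=U\times(a,b]$ one has $\eta_t(B)=\eta(U\times(a\wedge t,b\wedge t])=N(b\wedge t,U)-N(a\wedge t,U)$, which is $\mathcal F_t$-measurable by property (iii) of Definition \ref{def-Prm}. To extend to arbitrary $B\in\CS\otimes\mathcal B(\INT)$ I would fix $n\in\mathbb N$, observe that $\eta(S_n\times(0,t])$ is Poisson with finite parameter $t\nu(S_n)$ and so a.s.\ finite, and then apply Dynkin's $\pi$-$\lambda$ theorem to the collection of $B$ for which $\omega\mapsto\eta(B\cap(S_n\times(0,t]))$ is $\mathcal F_t$-measurable. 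The almost-sure finiteness is what makes this family a Dynkin class (stability under proper differences via subtraction); it contains the $\pi$-system of rectangles and hence coincides with $\CS\otimes\mathcal B(\INT)$. Letting $n\to\infty$ and invoking $S_n\uparrow S$ with the monotone convergence of measures completes the step.

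For the independence of $\eta^t$ from $\mathcal F_t$ I would show, again via Dynkin's lemma, that the $\pi$-system of cylinders
\[
\bigl\{\eta^t(V_1)\in A_1,\ldots,\eta^t(V_m)\in A_m\bigr\},\qquad V_j\in\CS\otimes\mathcal B((t,T])\text{ pairwise disjoint},\ A_j\in\mathcal B(\bar{\mathbb N}),
\]
is independent of $\mathcal F_t$. The simplest bookkeeping is via conditional Laplace functionals: it suffices to prove that
\[
\mathbb E\bigl[e^{-\eta(f)}\,\big|\,\mathcal F_t\bigr]=\mathbb E\bigl[e^{-\eta(f)}\bigr]
\]
for every bounded nonnegative Borel $f$ supported in $S\times(t,T]$, which one obtains first for simple $f=\sum_j c_j\mathbf 1_{U_j\times(a_j,b_j]}$ and then extends to general $f$ by a monotone class argument. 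The delicate point — and the real obstacle — is to promote the single-rectangle independence supplied by property (iii) to joint independence across several rectangles simultaneously, since mutual independence plus marginal independence from $\mathcal F_t$ does not in general imply joint independence from $\mathcal F_t$. To overcome it I would choose a common refinement $t=s_0<s_1<\cdots<s_N=T$ of all the time intervals $(a_j,b_j]$, decompose each $\eta(V_j)$ into a sum of increments $\eta(U_j\times(s_{\ell-1},s_\ell])$, use property (ii) to obtain mutual independence of these disjoint-rectangle increments within a common slab, and then use property (iii) to peel off the last slab $(s_{N-1},s_N]$ in the conditional Laplace functional, inducting backwards in $\ell$ down to $s_0=t$. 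Each induction step factors out one slab as independent of $\mathcal F_{s_{\ell-1}}\supseteq\mathcal F_t$, yielding the desired factorization and therefore the independence of $\eta^t$ from $\mathcal F_t$.
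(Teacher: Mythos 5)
Your treatment of the first assertion (measurability of $\eta_t$) is fine and in fact more careful than the paper's, which simply declares that it ``follows directly from the definition''; the rectangle-plus-Dynkin argument, with the restriction to $S_n\times(0,t]$ to keep the relevant random measures a.s.\ finite before taking differences, is the right way to make that precise, and it matches what the paper leaves implicit.

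The second assertion is where there is a genuine gap, and it sits precisely at the point you yourself flagged. Your backward induction over the slabs $(s_{\ell-1},s_\ell]$ needs, at each step, that the whole vector $\bigl(\eta(U_1\times(s_{\ell-1},s_\ell]),\dots,\eta(U_m\times(s_{\ell-1},s_\ell])\bigr)$ is independent of $\mathcal F_{s_{\ell-1}}$. Property (iii) of Definition \ref{def-Prm} is stated for each $U$ separately, so it only yields independence of each coordinate from $\mathcal F_{s_{\ell-1}}$, and property (ii) only yields mutual independence of the coordinates among themselves; as you observed at the outset, these two facts together do not imply independence of the vector from the $\sigma$-algebra. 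So the induction step invokes exactly the joint statement it was supposed to produce, and the argument is circular. (A toy example of the failure: let $X,Y$ be i.i.d.\ symmetric signs and $\mathcal G=\sigma(XY)$; each of $X$ and $Y$ is independent of $\mathcal G$, and $X$ and $Y$ are independent of each other, yet $(X,Y)$ is not independent of $\mathcal G$.) For comparison, the paper's own proof does not attempt independence from $\mathcal F_t$ at all: it uses the independently scattered property (ii) to show that $\sigma(\eta^t)$ is independent of $\sigma(\eta_t)$, which is weaker than the statement of the lemma unless $\mathcal F_t$ is generated by $\eta_t$, or unless (iii) is read --- as in Ikeda--Watanabe's notion of an $(\mathcal F_t)$-Poisson point process --- as a joint independence requirement over all $U$ simultaneously. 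If you adopt that stronger reading, i.e.\ take as given that the family $\{\eta(U\times(s,r]):U\in\CS,\ t\le s<r\}$ is jointly independent of $\mathcal F_t$, then your Laplace-functional and monotone-class machinery goes through and actually delivers the full claim; without it, neither your argument nor the paper's establishes independence of $\eta^t$ from all of $\mathcal F_t$.
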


\begin{proof} The first assertion of the Lemma follows directly from the definition. In particular,
since $\eta$ is adapted to the filtration $\BF$, the first assertion follows.
The second assertion follows from the independently scattered property. In particular, for any $n\in\NN$, $U,V\in\CS$, $s\in(0,t]$ and $r_1,r_2\in(t,\infty)$ 
the sets $U\cap S_n\times [0,s]$ and $V\cap S_n\times (r_1,r_2)$ are disjoint, therefore the $\sigma$--algebra generated by the random variables $\{\eta_t(U\cap S_n):U\in\CS\}$
and $\{ \eta^t (U\cap S_n): U\in \CS\}$ are independent. Hence, the filtration generated by $\eta^t $ and the $\sigma$--algebra generated by $\eta_t$ are independent.
\end{proof}
}

\medskip

\begin{lemma}\label{transequal} Let $\mathfrak{A}=(\Omega,\mathcal F,\BF 
,\Bbb P)$ be a filtered probability space  with filtration $\BF=(\mathcal F_t)_{t\in \INT }$, $u$ a $\Bbb D({[0,T]};Y)$-valued random variable over $\mathfrak{A}$,
and $\eta$ an {$M_{\bar{\Bbb N}}(\{S_n\times\Int \})$}-valued random variable over $\mathfrak{A}$. In addition,  we assume that  for any $t\ge 0$,   $u(t)$  and $\eta_t$ are $\mathcal F_t$-measurable and  $\eta^t$ is independent of $\mathcal F_t$.
If there exists a solution $\overline{ \mathfrak{A}}=(\bar\Omega,\bar{\mathcal F},(\bar{\mathcal F}_t)_{t\ge 0},\bar{\Bbb P},\bar u,\bar\eta)$ of Equation \eqref{eq} satisfying Equation \eqref{eq} and the assumption of Definition \ref{def_solution},  
such that the law of $(u,\eta)$ coincides with the law of $(\bar u,\bar\eta)$ on $\DD({[0,T]};Y)\times {M_{\bar{\Bbb N}}(\{S_n\times\INT \})}$, then $(\Omega,\mathcal F,(\mathcal F^{\Bbb P}_t)_{t\ge 0},\Bbb P,u,\eta)$ is a solution to Equation \eqref{eq} {satisfying Hypothesis \ref{hyp_solution}}.
\end{lemma}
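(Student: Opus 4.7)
The strategy is to transfer every required property from the barred space $\bar{\mathfrak A}$ to the unbarred space $\mathfrak A$ using the fact that $(u,\eta)$ and $(\bar u,\bar\eta)$ have the same joint law on $\Bbb D([0,T];Y)\times M_{\bar{\Bbb N}}(\{S_n\times[0,T]\})$; all properties appearing in Definition \ref{def_solution} and Hypothesis \ref{hyp_solution} will be shown to be events measurable with respect to the law of $(u,\eta)$. Before doing so, I would replace $\mathbb F$ by the usual augmentation $(\mathcal F_t^{\Bbb P})_{t\in[0,T]}$, which does not affect measurability of $u(t)$ or $\eta_t$, nor independence of $\eta^t$ from $\mathcal F_t$ (for the latter, standard monotone class / augmentation arguments apply).

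First, I would verify that $\eta$ is a time homogeneous Poisson random measure with intensity $\nu$ over $(\Omega,\mathcal F,(\mathcal F_t^{\Bbb P}),\Bbb P)$. By Lemma \ref{poiss_uni_distr}(ii), being a Poisson random measure with given intensity is a distributional property on $M_{\bar{\Bbb N}}(\{S_n\times [0,T]\})$, so the Poisson/independent-scattered characterization in Definition \ref{def-Prm}(i)–(ii) for $\eta$ follows from the corresponding property for $\bar\eta$ via equality of laws. The adaptedness and independence-of-the-past condition in Definition \ref{def-Prm}(iii) is exactly the assumption that $\eta_t$ is $\mathcal F_t$-measurable and $\eta^t$ is independent of $\mathcal F_t$; the converse direction of Lemma \ref{adapttimhom} then completes this step. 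Similarly, $u$ is c\`adl\`ag in $Y$ by hypothesis and adapted to $(\mathcal F_t^{\Bbb P})$, and the $\Bbb P$-almost sure properties $u(t)\in X$, $u\in\mathcal X$ and the integrability conditions \eqref{finiteintegrals} — which are measurable events in $(u,\eta)$ — transfer directly from $(\bar u,\bar\eta)$ to $(u,\eta)$.

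The main nontrivial step is to verify that $u$ satisfies equation \eqref{eq} on $\mathfrak A$. Fix $t\in[0,T]$ and $\varphi\in Y_0$. The deterministic terms $\langle a(t,u(0)),\varphi\rangle$ and $\int_0^t \langle b(t,s,u(s)),\varphi\rangle\,ds$ are pathwise measurable functionals of $u$ alone, so their laws on $\mathfrak A$ and on $\bar{\mathfrak A}$ agree. The stochastic integral term
\[
\mathcal I^{t,\varphi}(u,\eta) := \int_0^t\int_S \langle c(t,s,x,u(s)),\varphi\rangle\,\tilde\eta(dx,ds)
\]
has to be interpreted as a single measurable functional of the pair $(u,\eta)$, and this is where Proposition \ref{ehpro} enters: it asserts precisely that the triplet $(\eta,\xi,\mathcal I_\xi)$ has a joint law determined by the joint law of $(\eta,\xi)$, with the integral insensitive to passing to the augmented right-continuous filtration. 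Applying it with $\xi(s,x)=\langle c(t,s,x,u(s)),\varphi\rangle$ (and the analogous $\bar\xi$), one obtains that $\mathcal I^{t,\varphi}(u,\eta)$ and $\mathcal I^{t,\varphi}(\bar u,\bar\eta)$ have the same law, and hence the full identity \eqref{eq} holds $\Bbb P$-a.s., since it holds $\bar{\Bbb P}$-a.s. for $(\bar u,\bar\eta)$. Taking a countable dense set of $(t,\varphi)$ in $[0,T]\times Y_0$ (recall $Y_0$ is separable as a subset of the dual of a Fr\'echet space, which we may assume) and using right-continuity of $u$ gives the identity simultaneously for all $t,\varphi$ on a single full-measure set.

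Finally, Hypothesis \ref{hyp_solution-eqn} follows immediately: both $\{\theta_0^{\alpha_0}(\cdot)<\infty\}$ and $\Bbb E\,\theta_1^{\alpha_1}(\cdot)$ are functionals of the law of $u$ alone, and the law of $u$ equals the law of $\bar u$ by assumption. The main obstacle is the stochastic-integral identification, which requires a careful application of Proposition \ref{ehpro} together with the fact that the filtration augmentation does not alter the value of the compensated Poisson integrals built from a progressively measurable integrand.
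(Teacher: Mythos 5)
Your overall architecture matches the paper's: augment the filtration, transfer the distributional properties of $\eta$ via Lemma \ref{poiss_uni_distr} and Lemma \ref{adapttimhom}, transfer the almost-sure membership and integrability conditions as events measurable in the law of $(u,\eta)$, and then identify the equation term by term, the only delicate term being the compensated Poisson integral. The one concrete problem is the tool you invoke for that delicate term. Proposition \ref{ehpro} does \emph{not} assert that the law of the triplet $(\eta,\xi,\mathcal I_\xi)$ is determined by the law of $(\eta,\xi)$ across different stochastic bases: it compares two integrals built from two integrands $\xi$ and $\bar\xi$ over \emph{one and the same} probability space with \emph{one and the same} Poisson random measure $\eta$, and its only purpose is to show that passing to the right-continuous augmentation of the filtration does not change the integral. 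Since here $\eta$ and $\bar\eta$ live on different spaces $\mathfrak A$ and $\overline{\mathfrak A}$, the step ``$\mathcal I^{t,\varphi}(u,\eta)$ and $\mathcal I^{t,\varphi}(\bar u,\bar\eta)$ have the same law'' does not follow from Proposition \ref{ehpro} as you claim.

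The statement you actually need is Theorem \ref{equality} in Appendix A (uniqueness in law of the It\^o integral): if $\mathcal Law(\xi_1,\eta_1)=\mathcal Law(\xi_2,\eta_2)$ on $L^p([0,T];L^p(S,\nu;E))\times M_{\bar{\Bbb N}}(\{S_n\times[0,T]\})$, then the triplets $(I_1,\xi_1,\eta_1)$ and $(I_2,\xi_2,\eta_2)$ have the same law. This is precisely what the paper's proof applies, after first checking that $s\mapsto\langle c(t,s,\cdot,u(s)),\varphi\rangle$ is progressively measurable and $p$-integrable against $\nu\otimes ds$ (which is where the integrability conditions \eqref{finiteintegrals} and Proposition \ref{cont-haar-shift} enter), and after splitting the integrand into the small-jump part (treated with exponent $p$) and the large-jump part (treated with $p=1$), a decomposition your sketch omits but which is needed because \eqref{finiteintegrals} only controls the two regimes with different exponents. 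With Theorem \ref{equality} substituted for Proposition \ref{ehpro} and the small/large jump splitting added, your argument coincides with the paper's; your additional remark about a countable dense set of $(t,\varphi)$ is a harmless refinement the paper does not bother with, since it concludes the almost-sure identity for each fixed $t$ and $\varphi$ separately.
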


Before giving the proof of  {Lemma \ref{transequal}}, let us introduce some notations. For jointly Borel measurable mappings $\breve a:\Bbb R^+_0\times X\to\Bbb R$, $\breve b:\Bbb R^+_0\times\Bbb R^+_0\times X\to\Bbb R$ and $\breve c:\Bbb R^+_0\times\Bbb R^+_0\times Z\times X\to\Bbb R$, a time homogeneous Poisson random measure $\eta$ {with the intensity measure $\nu$} and an {adapted c\`adl\`ag} process $v$ {in $Y$ with $v(t)\in X$ a.s.\ for every $t\in\RR^+_0$}, both defined on a probability space $\mathfrak{A}=(\Omega,\mathcal F,\BF,\Bbb P)$ with filtration $\BF=(\mathcal F_t)_{t\in\INT}$, {satisfying}
\DEQS
\lqq{ \int_0^\infty|\breve b(t,s,v(s))|_Y\,ds+\int_0^\infty\int_{\{x\in S:|\breve c(t,s,x,v(s))|_Y\le 1\}}|\breve c(t,s,x,v(s))|_Y^p\,\nu(dx)\,ds}
&&
\\
&&{}+\int_0^\infty\int_{\{x\in S:|\breve c(t,s,x,v(s))|_Y>1\}} |\breve c(t,s,x,v(s))|_Y\,\nu(dx)\,ds
<\infty\quad\textrm{a.s.}
\EEQS
for every $t\in\Int $, define a nonlinear map  $\CK_\mathfrak{A}$
\begin{eqnarray}\label{definition-ck}
\CK_{\mathfrak{A}}(v,\eta) (t) &=& \breve a(t,v_0)+\int_0^\infty\breve b(t,s,v(s))\,ds
\\
&+&\int_0^\infty\int_S\breve c(t,s,x,v(s))\,\tilde\eta(dx,ds).\nonumber
\end{eqnarray}
Observe that $\CK_{\MA}(v,\eta)$ actually depends via the compensator of $\tilde \eta$,   also on the probability measure $\PP$.

\medskip

\del{
\begin{remark}
Let $\BF^\eta=(\CF^\eta_t)_{t\in[0,T]}$ be the filtration induced by the Poisson random measure $\eta$ on the probability space $(\Omega,\CF,\PP)$.
By the same considerations as done in Step III and Step IV in \cite{reacdiff} one can see that
$u$ is progressively measurable with respect to $\BF^\eta$.
\end{remark}
}
\medskip

\begin{proof}[Proof of  {Lemma \ref{transequal}}:]

{It is rather standard to prove that $\eta$ is a time homogenous Poisson random measure with intensity $\nu$ for the augmented filtration $(\CF_t^{\Bbb P})_{t\ge 0}$, cf.\ Lemma \ref{adapttimhom}, and that all the measure and integrability assumptions in Definition \ref{def_solution} are satisfied for $\Bbb P$, $u$ and $\eta$. So it just remains to prove that the actual equation \eqref{eq} holds, i.e. that}
for any $t\in\RR^+_0$ {and $\varphi\in Y^*$} we have
$${\PP} \lk(\CK_\mathfrak{A}(u,\eta)(t)-\langle u(t),\varphi\rangle=0\rk)=1,
      $$
      where  {$\CK_\mathfrak{A}$ is defined with $\breve a=\langle a,\varphi\rangle$, $\breve b=\langle b,\varphi\rangle$ and $\breve c=\langle c,\varphi\rangle$};

\medskip
\noindent
Fix $t\in\RR_0^+$ and $\varphi\in Y^\ast$.
Let us remind, that the mappings
\begin{itemize}
\item $a:{[0,T]}\times X\to Y$, $b:{[0,T]}\times{[0,T]}\times X\to Y$,
\item $c:{[0,T]}\times{[0,T]}\times {S}\times X\to Y$,
\item {$\theta^{\alpha_i}_i:\Bbb D([0,T],Y)
\to[0,\infty]$, $\alpha_i\in A_i$, $i\in\{0,1\}$, }
\item $\Xi_t^\varphi:[0,T]\times\Bbb D([0,T];Y)\to [0,\infty]$, 
\end{itemize}
are measurable.
Hence, the mapping
$$
X\ni v_0\mapsto {\mathfrak a}_t:= 
\la a(t,v_0
),\varphi\ra \in \RR
$$
is Borel measurable.
 Since $u$ and $\bar u$ belongs a.s.\ to $\DD([0,T];Y)$ and $u(0)$, $\bar u(0)$ belongs a.s.\ to $X$,
$u(0)$ and $\bar u(0)$ have the same law on $X$, it follows by Lemma 1.22 \cite{kallenberg} that  the triplets
$(   {\mathfrak a}_t(u(0)), u,\eta)$ and $( {\mathfrak a}_t(\bar u(0)) ,\bar u,\bar \eta)$ have the same law on $X\times \DD(\INT;Y)\times\CMM$.

Since $b:{[0,T]}\times{[0,T]}\times X\to Y$, is measurable, for any $s\in[0,T]$ $u(s)$ and $\bar u(s)$ are $X$--valued random variables,  $\Law(\bar u(s))=\Law(u(s))$, it follows that
for any $s,t\in[0,T]$ and $\omega\in $
the processes $\{\overline{ \mathfrak{b}}_t(s): s\ge 0\}$ and $\{ \mathfrak{b}_t(s): s\ge 0\}$, defined by
$$
\overline{ {\mathfrak b}}_t(s,\omega) := \langle b(t,s,\bar u(s,\omega),\varphi\rangle ,\quad s\in [0,T],
$$
and
$$
{ {\mathfrak b}}_t(s,\omega) := \la b(t,s, u(s,\omega),\varphi\ra  ,\quad s\in [0,T],
$$
have the same law for each $t\in[0,T]$.
In addition, we know by the definition of the solution that
$$\bar \PP\lk( \int_0^ t |\overline{  {\mathfrak  b}}_t(s)|\, ds <\infty\rk)=1,
\quad\mbox{and} \quad
\PP\lk( \int_0^ t | {\mathfrak b}_t(s)|\, ds <\infty\rk)=1.
$$
By Theorem 8.3 of \cite{martin1}, it follows 
that for any $t\in[0,T]$ 
$$\Law\lk(\bar u,\int_0^ t \bar {\mathfrak  b}_t(s)ds\rk)
\quad\mbox{and} \quad
\Law\lk(u,\int_0^ t  {\mathfrak  b}_t(s)ds\rk)$$
 are equal on $\DD([0,T];L^ 2(\RR^d))\times Y$.
Finally,
since $c:{[0,T]}\times{[0,T]}\times {S}\times X\to Y$
is measurable, for any $s\in[0,T]$, the random variable $\bar u(s)$ is  $\bar \CF_s$--measurable, and the random variable  $u(s)$ is $\bar \CF_s$--measurable.
It follows that the processes $\{\overline{ \mathfrak{c}}_t(s): s\ge 0\}$ and $\{ \mathfrak{c}_t(s): s\ge 0\}$, defined by
$$
\overline{ {\mathfrak c}}_t(s,\omega) := \langle c(t,s,\bar u(s,\omega),\varphi\rangle ,\quad s\in [0,T],
$$
and
$$
{ {\mathfrak c}}_t(s,\omega) := \la c(t,s, u(s,\omega),\varphi\ra  ,\quad s\in [0,T],
$$
are  adapted to the filtrations $(\bar \CF_{s})_{s\in[0,T]}$ and  $(\CF_{s})_{s\in[0,T]}$, respectively. In addition, since due to the fact that $\bar u$ is a solution, and the law of $(\bar u,\bar \eta)$ coincides with the law of $(u,\eta)$,
we know that
$$\bar \PP\lk( \int_0^ t \int_{\{x\in S: |\langle \bar c(t,s,x,u(s)),\varphi\rangle|< 1\}} |\bar {\mathfrak  c}_t(x,s)|^ p\, \nu(dx)\, ds <\infty\rk)=1,
$$
and, therefore,  
$$ \PP\lk( \int_0^ t \int_{\{ x\in S: |\langle c(t,s,x,u(s)),\varphi\rangle|<  1\}}   | {\mathfrak c}_t(x,s)|\, \nu(dx)\, ds <\infty\rk)=1.
$$
Hence,
$\bar \PP$--a.s.\ the process $[0,T]\ni s\mapsto { \bar{{\mathfrak c}}}_t(s)\in\RR$ and $\PP$--a.s.\ the process
$[0,T]\ni s\mapsto { {\mathfrak c}}_t(s)\in\RR$)
are belonging (but of the large jumps) to
$  L^p ( \INT ;\RR)$. It follows by Proposition \ref{cont-haar-shift}-(ii) that they are progressively measurable.
Hence, 
  Theorem \ref{equality}  is applicable and we know that
 $(\bar \CI(t),\bar u,\bar \eta)$  and $(\CI(t),u,\eta)$ have the same law on $\RR\times \DD([0,T];Y)\times \CMM)$,
where
$$
\bar \CI (t) :=  \int_{0}^ T\int_S \breve c (t,s,\bar u(s),z)\, \tilde \eta(dz,ds), \quad t\in[0,T],
$$
and
$$
 \CI (t) :=  \int_{0}^ T \int_S\breve c (t,s,u(s),z)\, \tilde \eta(dz,ds), \quad t\in[0,T].
$$

To deal with the large jumps, we use the fact that
$$
\bar \PP\lk( \int_0^t \int_{\{x\in S:|\overline{{\mathfrak c}}_t(x,s)|\ge 1\}} |\bar{{\mathfrak c}}_t(x,s)|\,\nu(dx)\,ds<\infty\rk)=1,
$$
and
$$
\PP\lk( \int_0^t \int_{\{x\in S:|{\mathfrak c}_t(x,s)|\ge 1\}} |{\mathfrak c}_t(x,s)|\,\nu(dx)\,ds<\infty\rk)=1.
$$
and proceed as above with $p=1$.
\del{

\medskip
Thanks to \cite[Theorem 4.1]{whitt}
the function $\Phi:C(\RR^ +_0;\RR )\times \DD (\RR^ +_0;\RR )\ni (u,v)\mapsto u+v\in \DD(\RR^ +_0;\RR)$  is continuous, therefore, the process
$$\RR^ +_0\ni t\mapsto \CK_{\mathfrak{A}}(u,\eta)(t)$$
has a  \cadlag version in $\RR$.
}

\medskip
Summing up, it follows that, if $(u,\eta)$ and $(\bar u,\bar \eta)$ have the same law on $\DD(\INT ;Y)\times \CMM$, then $\CK_{\bar{ \mathfrak A}}(\bar u,\bar \eta)(t),\bar u,\bar \eta)$ and $(\CK_{\mathfrak{A}}(u,\eta)(t),u, \eta)$ have the same law on $Y\times \DD(\INT;Y)\times \CMM$.
Since for all $t\in[0,T]$
$$\bar \PP\lk( \CK_{\bar {\mathfrak A} }(\bar u,\bar \eta)(t) -\bar u(t)=0\rk)=1,
$$
it follows that
$$ \PP\lk( \CK_{ \mathfrak A}(u,\eta)(t)- u(t)=0\rk)=1.
$$
In particular, the six tuple $( \Omega, \PP,\CF,\BF=( \CF_t)_{t\ge 0}, u, \eta)$ is a solution to \eqref{eq}.

\end{proof}

{
\section{Uniqueness}

Throughout this section, the notation of Section \ref{sec_abstr} will be kept. We are going to prove here that the abstract result of Kurtz \cite{kurtz1} can be applied to the problem \eqref{eq}. Or, in other words, that pathwise uniqueness for the equation \eqref{eq} implies joint uniqueness in law {and strong existence} for the equation \eqref{eq}.

In order to show this, let use define the Kurtz's {\it compatibility structure} and {\it $\CC$-compatibility} according to \cite[Definition 3.3]{kurtz1}.

{Throughout this section we fix an intensity measure $\nu$ and sets $S_n\in\mathcal S$ such that $S_n\uparrow S$ and $\nu(S_n)<\infty$.}

\begin{definition} Let us denote
$$
Z_1=\Bbb D([0,T],Y),\qquad Z_2={M_{\bar{\Bbb N}}(\{S_n\times{\Int}\})}\times X,
$$
$$
\CB^{Z_1}_t=\sigma(\pi_s:s\le t),\qquad\CB^{Z_2}_t=\sigma(R_t)\otimes\CB(X),
$$
where $\pi$ and $R$ are the canonical mappings, $\pi: \Bbb D([0,T],Y)\ni\pi\mapsto \pi(t)\in  Y$,
$$
R_t:{M_{\bar{\Bbb N}}(\{S_n\times{\Int}\})}\to{M_{\bar{\Bbb N}}(\{S_n\times{\Int}\})}:\mu\mapsto\mu(\cdot\cap(S\times(0,t])),
$$
and denote by $\mathcal C$ the Kurtz compatibility structure $\{(\CB^{Z_1}_t,\CB^{Z_2}_t):t\in[0,T]\}$.
\end{definition}

\begin{definition} If $A$ is an $Z_1$-valued random variable {over some probability space $\mathfrak{A}=(\Omega,\CF,\PP)$}, $B$ an $Z_2$-valued random variable over $\mathfrak{A}$
and $t\in[0,T]$, then $\mathscr F^A_t$ and $\mathscr F^B_t$ are the coarsest $\sigma$-algebras such that the mappings
$$
A:(\Omega,\mathscr F^A_t)\to(Z_1,\CB^{Z_1}_t)\quad \mbox{and}\quad  B:(\Omega,\mathscr F^B_t)\to(Z_2,\CB^{Z_2}_t)
$$
are measurable.
\end{definition}

\begin{remark} If $A$ is an $Z_1$-valued random variable over $\mathfrak{A}$ and $B=(\eta,\xi)$ an $Z_2$-valued random variable over $\mathfrak{A}$, it is rather standard to see that
$$
\mathscr F^A_t=\sigma(A_s:s\le t)\quad \mbox{and}\quad\mathscr F^B_t=\sigma(\eta_t)\lor\sigma(\xi)
$$
hold for every $t\in[0,T]$, as defined in \eqref{etat}.
\end{remark}

\begin{definition}\label{def44} We say that $Z_1$-valued random variables $A_1,\dots,A_n$ are $\CC$-compatible with an $Z_2$-valued random variable $B$ provided that
\begin{equation}\label{compat1}
\Bbb E\,[h(B)|\mathscr F^{A_1}_t\lor\dots\lor\mathscr F^{A_n}_t\lor\mathscr F^B_t]=\Bbb E\,[h(B)|\mathscr F^B_t]\qquad\textrm{a.s.}
\end{equation}
holds for every $t\in[0,T]$ and every real bounded Borel measurable function $h$ on $Z_2$.
\end{definition}

\begin{remark} $\CC$-compatibility of random variables $A_1,\dots,A_n$ with a random variable $B$ is actually a property of the joint law of $(A_1,\dots,A_n,B)$, as follows from \cite[Remark 3.5]{kurtz1}. Hence we can introduce the notion of $\CC$-compatibility for Borel probability measures on $Z_1^n\times Z_2$, see \cite[Definition 3.6]{kurtz1} and Definition \ref{compmeas} below.
\end{remark}

\begin{definition}\label{compmeas} A probability measure on $Z_1^n\times Z_2$ is called $\CC$-compatible provided that, if any $(A_1,\dots,A_n,B)$ are distributed according to $\mu$ then $A_1,\dots,A_n$ are $\CC$-compatible with $B$ in the sense of Definition \ref{def44}.
\end{definition}

\begin{lemma}\label{anothercomp} Let $\eta$ be a time homogeneous Poisson random measure with the intensity measure $\nu$ on $S$ for some filtration $(\mathcal F_t)_{t\in\INT}$ and let $u_0$ be an $\mathcal F_0$-measurable $X$-valued random variable. Then $Z_1$-valued random variables $A_1,\dots,A_n$ are $\CC$-compatible with $B=(\eta,u_0)$ if and only if
$\mathscr F^{A_1}_t\lor\dots\lor\mathscr F^{A_n}_t\lor\mathscr F^B_t$ is $\Bbb P$-independent of $\sigma(\eta^t)$.
\end{lemma}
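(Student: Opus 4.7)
\medskip

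\textbf{Proof sketch for Lemma \ref{anothercomp}.}
Write $\mathscr G_t:=\mathscr F^{A_1}_t\lor\cdots\lor\mathscr F^{A_n}_t$. The plan is to observe that $B=(\eta,u_0)$ splits measurably as $(\eta_t,u_0,\eta^t)$, so that $\sigma(B)=\mathscr F^B_t\lor\sigma(\eta^t)$, and then reduce the conditional-expectation identity in Definition \ref{def44} to a standard independence statement. First I would record the trivial but crucial observation that the addition map $M_{\bar\NN}(\{S_n\times\INT\})^2\to M_{\bar\NN}(\{S_n\times\INT\})$, $(\mu,\rho)\mapsto\mu+\rho$, is Borel measurable (the generators $i_B$ from Notation \ref{notationref4} are preserved), so any bounded Borel $h:Z_2\to\RR$ can be written in the form $h(\eta,u_0)=\tilde h(\eta_t,u_0,\eta^t)$ for a bounded Borel $\tilde h$.

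For the direction $(\Leftarrow)$, assume $\mathscr G_t\lor\mathscr F^B_t\perp\sigma(\eta^t)$. Note that by Lemma \ref{adapttimhom}, $\eta^t$ is already independent of $\mathcal F_t$, and since $u_0$ is $\mathcal F_0$-measurable and $\eta_t$ is $\mathcal F_t$-measurable we have $\mathscr F^B_t\subseteq\mathcal F_t$, so $\eta^t$ is independent of $\mathscr F^B_t$ as well. Applying the standard ``freezing'' lemma (conditional expectation of a measurable function of an independent random element) to the pair $(\eta^t,(\eta_t,u_0))$ relative to both $\sigma$-algebras yields
\[
\Bbb E[\tilde h(\eta_t,u_0,\eta^t)\mid\mathscr G_t\lor\mathscr F^B_t]=\phi(\eta_t,u_0)=\Bbb E[\tilde h(\eta_t,u_0,\eta^t)\mid\mathscr F^B_t],
\]
with $\phi(x,y):=\Bbb E\tilde h(x,y,\eta^t)$, which is exactly the compatibility identity \eqref{compat1}.

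For $(\Rightarrow)$, assume $\CC$-compatibility. Given any $G\in\sigma(\eta^t)$, set $h(\mu,x):=1_G(\mu^t)$, which is Borel on $Z_2$ and satisfies $h(B)=1_G(\eta^t)$. Since $\eta^t$ is independent of $\mathscr F^B_t$, we get $\Bbb E[h(B)\mid\mathscr F^B_t]=\Bbb P(\eta^t\in G)$, a constant. By \eqref{compat1} the same constant is a version of $\Bbb E[1_G(\eta^t)\mid\mathscr G_t\lor\mathscr F^B_t]$, hence for every $F\in\mathscr G_t\lor\mathscr F^B_t$ we have $\Bbb P(F\cap\{\eta^t\in G\})=\Bbb P(F)\Bbb P(\eta^t\in G)$. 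Letting $F$ and $G$ vary gives the required independence.

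The only mildly delicate step is checking the ``freezing'' equality in the first direction, which is the usual product-measure/Fubini argument once the measurable decomposition $h(B)=\tilde h(\eta_t,u_0,\eta^t)$ is in place; everything else is formal manipulation with the definitions of $\eta_t$, $\eta^t$ and $\mathscr F^B_t$.
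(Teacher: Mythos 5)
Your argument is correct and follows essentially the same route as the paper: both rest on the decomposition $\eta=\eta_t+\eta^t$, the fact that $\sigma(\eta^t)$ is independent of $\mathscr F^B_t=\sigma(\eta_t)\lor\sigma(u_0)$ via Lemma \ref{adapttimhom}, and the observation that the compatibility identity then reduces to independence of the larger $\sigma$-algebra from $\sigma(\eta^t)$. Your version merely makes explicit the measurable factorization $h(B)=\tilde h(\eta_t,u_0,\eta^t)$ and the freezing-lemma computation that the paper compresses into the sentence ``\eqref{compat1} holds if and only if $\Bbb E[h(\eta^t)\mid\cdots]=\Bbb E[h(\eta^t)\mid\mathscr F^B_t]$,'' which is a welcome amount of extra detail rather than a different proof.
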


\begin{proof}
Since $\eta=\eta_t+\eta^t$, \eqref{compat1} holds if and only if
$$
\Bbb E\,[h(\eta^t)|\mathscr F^{A_1}_t\lor\dots\lor\mathscr F^{A_n}_t\lor\mathscr F^B_t]=\Bbb E\,[h(\eta^t)|\mathscr F^B_t].
$$
But $\sigma(\eta^t)$ and $\mathscr F^B_t=\sigma(\eta_t)\lor\sigma(u_0)$ are $\Bbb P$-independent by Lemma \ref{adapttimhom}, hence \eqref{compat1} holds if and only if $\mathscr F^{A_1}_t\lor\dots\lor\mathscr F^{A_n}_t\lor\mathscr F^B_t$ is $\Bbb P$-independent of $\sigma(\eta^t)$.
\end{proof}

Now we are ready to define a Kurtz convexity constraint $\Gamma_\nu$}, see \cite[page 958]{kurtz1}.

\begin{definition}\label{gambet} If $\mu$ is a Borel probability measure on $Z_1\times Z_2$ and a random vector $(u,\eta,u_0)$ over a probability space $(\Omega,\CF,\PP)$
 has the distribution $\mu$, we say that $\mu$ satisfies a convexity constraint {$\Gamma_\nu$} provided that
\begin{itemize}
\item[(a)] $u(0)=u_0$ almost surely,
\item[(b)] there exists a solution $(\bar\Omega,\bar\CF,\bar\BF, (\bar\CF_t)_{t\in\INT},\bar\PP,\bar u,\bar\eta)$ to the equation \eqref{eq}
{satisfying \ref{hyp_solution}}, in particular \eqref{hyp_solution-eqn}, such that {$\nu$} is the intensity measure of $\bar\eta$ and the law of $(\bar u,\bar \eta)$ coincides with the law of $(u,\eta)$.
\end{itemize}
\end{definition}

\begin{remark} Paradoxically, despite of the notion, we need not prove here that the Kurtz's convexity constraint {$\Gamma_\nu$} really defines a convex set of probability measures
 on $Z_1\times Z_2$. For us, {$\Gamma_\nu$} is viewed as a mere constraint with no convexity properties. We will explain more in the proof of Theorem \ref{thm513}.
\end{remark}

Finally, we define the Kurtz set {$\CS_{\Gamma_\nu,\CC,\Theta_\nu\otimes\beta}$}, see \cite[page 958]{kurtz1}.

\begin{definition}\label{kucose} Let $\beta$ be a Borel probability measure on $X$. We denote by {$\CS_{\Gamma_\nu,\CC,\Theta_\nu\otimes\beta}$} the set of probability measures $\mu$ on $Z_1\times Z_2$ such that
\begin{itemize}
\item[(a)] $\mu$ satisfies the convexity constraint {$\Gamma_\nu$},
\item[(b)] $\mu$ is $\CC$-compatible,
\item[(c)] $\mu(Z_1\times\cdot)={\Theta_\nu}\otimes\beta$ on $\CB(Z_2)$,
\end{itemize}
{where $\Theta_\nu$ is the probability measure introduced in Lemma \ref{poiss_uni_distr}.}
\end{definition}

\begin{remark} Observe that the condition (b) in Definition \ref{kucose} is superfluous as it follows from (a) due to Lemma \ref{anothercomp}. We however present Definition \ref{kucose} as it is, to be conformal with Kurtz's notation in \cite{kurtz1}.
\end{remark}

\begin{remark} The Kurtz set {$\CS_{\Gamma_\nu,\CC,\Theta_\nu\otimes\beta}$} is in fact convex. We however do not need the convexity in this paper so we do not  prove it either.
\end{remark}

We can now give a full description of the set {$\CS_{\Gamma_\nu,\CC,\Theta_\nu\otimes\beta}$}.

\begin{corollary} Let $\beta$ be a Borel probability measure on $X$. Then $\mu\in{\CS_{\Gamma_\nu,\CC,\Theta_\nu\otimes\beta}}$ if and only if
\begin{itemize}
\item there exists a solution $(\bar\Omega,\bar\CF,\bar\BF, (\bar\CF_t)_{t\in\INT},\bar\PP,\bar u,{\bar\eta})$ to the equation \eqref{eq}
{satisfying Hypothesis \ref{hyp_solution}},
\item {$\nu$} is the intensity measure of {$\bar\eta$},
\item $\mu$ is the law of $(\bar u,{\bar\eta},\bar u(0))$,
\item $\beta$ is the law of $\bar u(0)$.
\end{itemize}
\end{corollary}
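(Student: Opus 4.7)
The plan is to unwind Definitions \ref{gambet} and \ref{kucose} and to observe that, once the convexity constraint $\Gamma_\nu$ is imposed, the remaining marginal condition (c) of Definition \ref{kucose} collapses to a one-dimensional statement on $\bar u(0)$, thanks to the built-in independence of a time-homogeneous Poisson random measure from its initial $\sigma$-algebra. The $\CC$-compatibility condition (b) of Definition \ref{kucose} is then taken care of automatically by the remark following Definition \ref{kucose} together with Lemma \ref{anothercomp}.

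For the reverse direction, I would start from a solution $(\bar\Omega,\bar\CF,\bar\BF,\bar\PP,\bar u,\bar\eta)$ satisfying Hypothesis \ref{hyp_solution} with $\bar u(0)\sim\beta$, set $\mu:={\mathcal L}(\bar u,\bar\eta,\bar u(0))$, and verify the three conditions of Definition \ref{kucose}. Condition (a) holds because $(\bar u,\bar\eta)$ itself is the solution in Definition \ref{gambet}, and the equality $u_0=u(0)$ a.s.\ is forced under the pushforward since $\{(w,m,x)\in Z_1\times Z_2:w(0)=x\}$ is a Borel set on which $(\bar u,\bar\eta,\bar u(0))$ lives with probability one. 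Condition (b) is a direct consequence of Lemma \ref{anothercomp}: indeed, $\bar u$ is $\bar\BF$-adapted, $\sigma(\bar\eta_t)$ and $\sigma(\bar u(0))$ sit inside $\bar\CF_t$, and $\bar\eta^t$ is $\bar\CF_t$-independent by Lemma \ref{adapttimhom}. For condition (c), Lemma \ref{poiss_uni_distr} already yields $\bar\eta\sim\Theta_\nu$, and property (iii) of Definition \ref{def-Prm} taken with $s=0$ makes every $\bar\eta(U\times(0,t])$ independent of $\bar\CF_0$; a Dynkin $\pi$-$\lambda$ argument, using that $\mathcal M_{\bNN}(\{S_n\times\Int\})$ is countably generated by Lemma \ref{measure_lemma}, upgrades this to independence of the full random measure $\bar\eta$ from $\bar\CF_0$, and hence from $\bar u(0)$, whence ${\mathcal L}(\bar\eta,\bar u(0))=\Theta_\nu\otimes\beta$.

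For the forward direction, given $\mu\in\CS_{\Gamma_\nu,\CC,\Theta_\nu\otimes\beta}$, I would realise it as the joint law of a random triple $(u,\eta,u_0)$ on some probability space, and invoke part (a) of Definition \ref{kucose} to produce a solution $(\bar u,\bar\eta)$ satisfying Hypothesis \ref{hyp_solution} with ${\mathcal L}(\bar u,\bar\eta)={\mathcal L}(u,\eta)$ and with $u_0=u(0)$ almost surely. Since the evaluation map $w\mapsto w(0)$ is Borel on $\Bbb D([0,T],Y)$, the equality $u_0=u(0)$ a.s.\ lets me identify $\mu={\mathcal L}(u,\eta,u(0))={\mathcal L}(\bar u,\bar\eta,\bar u(0))$; the distribution $\bar u(0)\sim\beta$ then drops directly out of the marginal condition (c).

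The main delicate step is the upgrade from finite-dimensional independence of the increments of $\bar\eta$ and $\bar\CF_0$ to independence of the full Borel-measurable random measure $\bar\eta$ from $\bar u(0)$; once that monotone-class argument is in place, every remaining item in both directions amounts to pure bookkeeping under the definitions.
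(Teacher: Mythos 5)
Your proof is correct and follows exactly the route the paper intends: the paper states this Corollary without proof as a direct unwinding of Definitions \ref{gambet} and \ref{kucose}, relying on the same ingredients you invoke, namely Lemma \ref{anothercomp} together with the remark that condition (b) of Definition \ref{kucose} is superfluous, Lemma \ref{poiss_uni_distr} for the law $\Theta_\nu$ of $\bar\eta$, and Lemma \ref{adapttimhom} (at $t=0$, where $\bar\eta^0=\bar\eta$) for the independence of $\bar\eta$ from $\bar\CF_0$ and hence from $\bar u(0)$, which yields the product form $\Theta_\nu\otimes\beta$ of the $Z_2$-marginal. Your monotone-class upgrade of the increment independence is a slightly more explicit version of what Lemma \ref{adapttimhom} already delivers, but it is correct and fills in the bookkeeping the paper leaves implicit.
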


\begin{theorem}\label{thm513}
Let $\beta$ be a Borel probability measure on $X$ and assume that
\begin{itemize}
\item there exists a solution $(\bar\Omega,\bar\CF,\bar\BF, (\bar\CF_t)_{t\in\INT},\bar\PP,\bar u,\bar\eta)$ to the equation \eqref{eq} {satisfying Hypothesis \ref{hyp_solution}} such that {$\nu$} is the intensity measure of $\bar\eta$ and $\beta$ is the law of $\bar u(0)$,
\item whenever $(\Omega,\CF,\BF,(\CF_t)_{t\in\INT},\PP,u^1,\eta)$ and $(\Omega,\CF,\BF,(\CF_t)_{t\in\INT},\PP,u^2,\eta)$ are solutions to the equation \eqref{eq} {satisfying Hypothesis \ref{hyp_solution}} such that {$\nu$} is the intensity measure of $\bar\eta$, $\beta$ is the law of $u^1(0)$ and $u^1(0)=u^2(0)$ a.s. then $u^1=u^2$ a.s.
\end{itemize}
Then there exists a Borel measurable mapping
$$
F:\CMM\times X\to\Bbb D([0,T];Y)
$$
depending on {$\nu$} and $\beta$ such that
\begin{itemize}
\item[(1)] if $(\Omega,\CF,\BF, (\CF_t)_{t\in \INT},\PP,u,\eta)$ is a solution to the equation \eqref{eq} {satisfying Hypothesis \ref{hyp_solution}} such that {\color{Green}$\nu$} is the intensity measure of {$\eta$} and  $\beta$ is the law of $u(0)$ then $u=F(\eta,u(0))$ a.s. and $u$ is adapted to the $\Bbb P$-augmentation of the filtration $(\sigma(\eta_t),\sigma(u(0)))_{t\in\INT}$,
\item[(2)] if $(\Omega,\CF,\BF, (\CF_t)_{t\in\INT},\Bbb P)$ is a stochastic basis, $\xi^*$ is an $X$-valued $\mathcal F_0$-measurable random variable with law $\beta$ and $\eta$ is a time homogeneous $(\mathscr F_t)_{t\in\INT}$-Poisson random measure with intensity {\color{Green}$\nu$} then $u=F(\eta,\xi)$ is $(\mathscr F^{\Bbb P}_t)_{t\in\INT}$-adapted, $u(0)=\xi$ a.s. and $(\Omega,\CF,\BF, (\CF^{\Bbb P}_t)_{t\in\INT},\Bbb P,u,\eta)$ is a solution to \eqref{eq} satisfying Hypothesis \ref{hyp_solution}.
\end{itemize}

Consequently, if $(\Omega^i,\CF^i,\BF^i, (\CF_t^i)_{t\in\INT},\PP^i,u^i,\eta^i)$, $i=1,2$ are solutions to the equation \eqref{eq} satisfying Hypothesis \ref{hyp_solution} such that {$\nu$} is the intensity measure of $\eta^1$ and $\eta^2$, $\beta$ is the law of $u^1(0)$ and $u^2(0)$ then the law of  $(u^1,\eta^1)$ coincides with the law of $(u^2,\eta^2)$.
\end{theorem}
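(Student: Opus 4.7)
The plan is to apply the abstract Yamada--Watanabe theorem of Kurtz \cite[Theorem 3.14]{kurtz1} to the Kurtz set $\mathcal{S}_{\Gamma_\nu,\mathcal{C},\Theta_\nu\otimes\beta}$ of Definition \ref{kucose}. Kurtz's result delivers exactly the measurable selector $F$ we seek, provided two inputs: the set is non-empty (weak existence), and Kurtz's pointwise uniqueness holds, namely that any two elements of the set can be coupled into a joint law whose two $Z_1$-components agree almost surely.

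First I would verify non-emptiness. The hypothesised solution $(\bar\Omega,\bar{\mathcal F},\bar\BF,\bar\PP,\bar u,\bar\eta)$ has initial condition of law $\beta$, so the law of $(\bar u,\bar\eta,\bar u(0))$ lies in $\mathcal{S}_{\Gamma_\nu,\mathcal{C},\Theta_\nu\otimes\beta}$. Condition (a) of Definition \ref{kucose} is immediate; condition (c) follows from Lemma \ref{poiss_uni_distr}(ii) together with the independence of $\bar u(0)$ (an $\bar{\mathcal F}_0$-measurable variable) and $\bar\eta$ (whose increments over $(0,\infty)$ are independent of $\bar{\mathcal F}_0$ by Definition \ref{def-Prm}). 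For $\mathcal{C}$-compatibility (b), note that $\mathscr F^{\bar u}_t\subseteq\bar{\mathcal F}_t$, while Lemma \ref{adapttimhom} asserts $\sigma(\bar\eta^t)$ is independent of $\bar{\mathcal F}_t$; therefore $\sigma(\bar\eta^t)$ is independent of $\mathscr F^{\bar u}_t\vee\mathscr F^{(\bar\eta,\bar u(0))}_t$, so Lemma \ref{anothercomp} applies.

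The core step is Kurtz's pointwise uniqueness. Given $\mu_1,\mu_2\in\mathcal{S}_{\Gamma_\nu,\mathcal{C},\Theta_\nu\otimes\beta}$, I would couple them on a common probability space $(\Omega,\mathcal F,\mathbb{P})$ via the standard gluing along their shared $Z_2$-marginal, producing a triple $(u^1,u^2,B)$ with $B=(\eta,u_0)$ such that $(u^i,B)$ has law $\mu_i$ and both $u^1$, $u^2$ are $\mathcal C$-compatible with $B$. Equip this space with the $\mathbb{P}$-augmentation $(\mathcal G_t)_{t\in\INT}$ of $\bigl(\mathscr F^{u^1}_t\vee\mathscr F^{u^2}_t\vee\sigma(\eta_t)\vee\sigma(u_0)\bigr)_{t\in\INT}$. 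Because of joint $\mathcal C$-compatibility, $\sigma(\eta^t)$ remains independent of $\mathcal G_t$, so $\eta$ is a time-homogeneous Poisson random measure with intensity $\nu$ relative to this filtration. By Lemma \ref{transequal}, each $(u^i,\eta)$ is then a genuine solution of \eqref{eq} on the common stochastic basis satisfying Hypothesis \ref{hyp_solution}, with $u^1(0)=u^2(0)=u_0$ a.s.\ of law $\beta$. The pathwise uniqueness hypothesis forces $u^1=u^2$ a.s.

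With non-emptiness and pointwise uniqueness established, Kurtz's theorem produces a Borel map $F:\CMM\times X\to\mathbb{D}([0,T];Y)$ satisfying assertions (1) and (2); adaptedness of $F(\eta,u(0))$ to the $\mathbb{P}$-augmentation of $(\sigma(\eta_t)\vee\sigma(u(0)))_{t\in\INT}$ is built into Kurtz's construction via the predictable $\sigma$-fields $\mathcal B^{Z_2}_t$. Joint uniqueness in law is then immediate: for two solutions $(u^i,\eta^i)$ as in the final assertion, (1) yields $u^i=F(\eta^i,u^i(0))$ a.s., while $(\eta^i,u^i(0))$ has law $\Theta_\nu\otimes\beta$ for both $i$ (since $u^i(0)$ is independent of $\eta^i$, as noted above), so the pushforward laws of $(u^i,\eta^i)$ coincide. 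The main obstacle is the compatible coupling step: one must glue $\mu_1,\mu_2$ along their common $B$-marginal so that \emph{both} $Z_1$-components remain $\mathcal C$-compatible with $B$ simultaneously, in order that Lemma \ref{transequal} can be invoked twice on the same stochastic basis; this is the content of Kurtz's coupling construction and must be verified carefully in our non-locally-compact setting.
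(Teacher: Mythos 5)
Your proposal is correct and follows essentially the same route as the paper: both reduce the statement to the implication $(a)\Rightarrow(b)$ of \cite[Theorem 3.14]{kurtz1} by checking that the Kurtz set $\CS_{\Gamma_\nu,\CC,\Theta_\nu\otimes\beta}$ is non-empty and that Kurtz's pointwise uniqueness holds, and both use Lemma \ref{transequal} as the bridge from ``pair with the law of a solution on a compatible filtration'' to ``genuine solution,'' so that the pathwise uniqueness hypothesis can be applied on the coupled space and, for part (2), so that $F(\eta,\xi)$ is recognised as a solution on an arbitrary stochastic basis. Your write-up is in fact more explicit than the paper's on the coupling step underlying pointwise uniqueness (which the paper asserts in one line, deferring to Kurtz), and it correctly omits any appeal to convexity of $\Gamma_\nu$, matching the paper's closing remark that convexity is only needed for the converse implication.
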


\begin{proof}
By the assumptions in Theorem \ref{thm513}, the Kurtz set {$\CS_{\Gamma_\nu,\CC,\Theta_\nu\otimes\beta}$} is non-empty and pointwise uniqueness holds for $\CC$-compatible solutions of {$(\Gamma_\nu,\Theta_\nu\times\beta)$} in the sense of \cite[p. 959]{kurtz1}. Hence by the implication $(a)\Rightarrow(b)$ in \cite[Theorem 3.14]{kurtz1}, joint uniqueness in law holds for compatible solutions, i.e. {$\CS_{\Gamma_\nu,\CC,\Theta_\nu\otimes\beta}$} contains exactly one measure, and there exists a strong compatible solution in the sense of \cite[p. 959]{kurtz1} and \cite[Lemma 3.11]{kurtz1}, i.e. (1) holds.

To prove (2), once $u=F(\eta,\xi)$, we have that the law of $(u,\eta,\xi)$ coincides with the law of $(\bar u,\bar\eta,\bar u(0))$. Hence $u(0)=\xi$ a.s. and $u$ is compatible with $(\eta,u(0))$ by \cite[Remark 3.5]{kurtz1}. Thus $u$ is adapted to the $\Bbb P$-augmentation of the filtration $(\sigma(\eta_t)\lor\sigma(u(0)))_{t\in\INT}$ by \cite[Lemma 3.11]{kurtz1}. The rest then follows from Lemma \ref{transequal}.

We must point out here that the constraint {$\Gamma_\nu$} need not define a convex subset of Borel probability measures on $Z_1\times Z_2$ if we apply just the implication $(a)\Rightarrow(b)$ in \cite[Theorem 3.14]{kurtz1}. The convexity of the Kurtz set {$\CS_{\Gamma_\nu,\CC,\Theta_\nu\otimes\beta}$} is needed just for the implication $(a)\Leftarrow(b)$ in \cite[Theorem 3.14]{kurtz1} which we do not apply in our case.
\end{proof}

\appendix

\section{Uniqueness of the stochastic integral}

Let $X$ and $E$ be two separable Banach. Later on we will take $X$ to  be one of the
spaces $E$ or  $L^p(S,\nu,E)$. Let $\mathfrak{A}=(\Omega,\CF,(\CF_t)_{t\in\INT},\PP)$
be an arbitrary filtered probability space and $\eta$ be a Poisson random measure define over $\mathfrak{A}$.
Let
$\mathcal{N}(\Omega\times \INT ;X)$ be the space of (equivalence classes of)
progressively measurable functions $\xi :\Omega\times \INT\to X$.

For $ q\in (1,\infty ) $ we  set
\begin{eqnarray}\;\;
\\ \nonumber
\mathcal{N}^q(\Omega\times \INT,\mathcal{F};X)&=&
\left\{
\xi \in \mathcal{N}(\Omega\times \INT ,\mathcal{F};X): \;
 \int_0^\infty\vert\xi (t)\vert^q\,dt<\infty \mbox{ a.s. }
\right\},
\label{def:Nq}
\\
\\ \nonumber
\;\; \;\;
\mathcal{M}^q(\Omega\times \INT,\mathcal{F};X)&=&
\left\{\xi \in \mathcal{N}(\Omega\times \INT,\mathcal{F};X):\mathbb{E}\int_
0^\infty\vert\xi (t)\vert^q\,dt<\infty\right\}.
\label{def:Mq}
\end{eqnarray}

Let $E$ be a space of martingale type $p$ and put $X=L^p(E;\nu)$, and
let $\xi\in \CN^ p(\Omega\times \INT,\CF;X)$. In particular, $\xi: \Omega\times \INT \times S\to E$ be a progressively measurable  process such that $\PP$--a.s.\
\DEQSZ
\label{E:Main_cond-1.01}
 \int_0^T \int_S\vert \xi(r,z)\vert_E^p\,
\nu(dz)\,dr 
  &< &\infty.
\EEQSZ

Let us consider the law of the triplet $(\eta,\xi,I)$, where $I$ is the It\^o  integral of $\xi$ with respect to
$\eta$ as defined on page \pageref{eqn-2.02}. 
We have shown in Theorem 2.4 of \cite{zdzandme} that the law is unique in case
\DEQS
\EE \int_0^T  \int_S\vert \xi(r,z)\vert_E^p\,
\nu(dz)\,dr 
  &< &\infty.
\EEQS
In this appendix we want to extend this result to all progressively processes
$\xi$ satisfying only \eqref{E:Main_cond-1.01}.
However, before stating  the Theorem we want to define  uniqueness in law.

\begin{definition}\label{def-equality}  
Let $(X,\CX)$ be a measurable space.
When we say that $\xi_1$ and $\xi_2$ have the same law on $X$ (and write $\CL aw(\xi_1)=\CL aw(\xi_2)$ on $X$), we mean that $\xi_i$, $i=1,2$, are $X$-valued random variables defined over some
probability spaces $(\Omega_i,\CF_i,\PP_i)$, $i=1,2$, such that
$$
\PP_1 \bar{\circ}\, \xi_1= \PP_2\bar{\circ}\, \xi_2,
$$
where $\PP_i\bar{\circ}\, \xi_i(A) = \PP_i(\xi ^ {-1}_i(A))$, $A\in\CX$, $i=1,2$, is a   probability measure on $(X,\CX)$ called the law of $\xi_i$.
\end{definition}

\begin{theorem} \label{equality}
Let $(\Omega_i,\CF_i,\PP_i)$, $i=1,2$, be two probability spaces and $(\CF ^ i_t)_{t\in\INT}$ a filtration of $(\Omega_i,\CF_i)$.
Assume that $\{(\xi _i,\eta _i),n\in\NN\}$, $i=1,2$, are two $L ^ p(\INT;L^p(S,\nu,E)) \times  \CMM $ valued random variables
defined on $(\Omega_i,\CF_i,\CF ^ i_t,\PP_i)$, $i=1,2$, respectively.
Assume that $\eta_1$ is a time homogeneous Poisson random measure over $(\Omega_1,\CF_1,(\CF ^ 1_t)_{t\in\INT},\PP_1)$
with intensity $\nu $.
Furthermore, assume that $\xi_1\in\CN(\Omega_1\times \INT;L^p(S,\nu,E))$ with respect to $(\CF ^ 1_t)_{t\in\INT}$.

\noindent
Let
\DEQS
I _i (t): =I(\xi_i,\eta_i)(t)= \int_0 ^ t 
\int_S 
  \xi_i(s,z)\, \tilde  \eta_i(dz,ds).
\EEQS
\begin{trivlist}
\item[(i)]
If $\CL aw( (\xi  _1,\eta_1))=\CL aw((\xi _2,\eta_2)) $ on
$$L ^ p(\INT;L ^ p(S,\nu;E)) \times \CMM ,
$$
then
$\CL aw( (I _1,\xi  _1,\eta_1))=\CL aw((I _2,\xi _2,\eta_2)) $ on
$$\DD(\INT;E)\times  L ^ p(\RR_+;L ^ p(S,\nu;E))\times  \CMM .
$$
\item[(ii)]
%
If $\CL aw( (\xi  _1,\eta_1))=\CL aw((\xi _2,\eta_2)) $ on
$$L ^ p(\INT;L ^ p(S,\nu;E))\times  \CMM ,
$$
then
$\CL aw( (I _1,\xi  _1,\eta_1))=\CL aw((I _2,\xi _2,\eta_2)) $ on
$$L^p(\INT;E)\times  L ^ p(\INT;L ^ p(S,\nu;E))\times  \CMM .
$$
\end{trivlist}
\end{theorem}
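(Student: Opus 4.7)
The plan is to reduce the statement to the $\EE L^p$-integrable case already established as Theorem 2.4 of \cite{zdzandme} by a standard localisation argument, and then pass to the limit.

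For $i\in\{1,2\}$ and $n\in\NN$, I introduce the $(\CF^i_t)$-stopping time
$$
\tau^i_n:=\inf\Big\{t\in[0,T]:\int_0^t\!\int_S|\xi_i(r,z)|_E^p\,\nu(dz)\,dr\ge n\Big\}\wedge T,
$$
and set $\xi_i^{(n)}:=\xi_i\,\mathbf{1}_{[0,\tau^i_n]}$. Because the cumulative process is continuous, non-decreasing and adapted, $\tau^i_n$ is indeed a stopping time, $\xi_i^{(n)}$ is progressively measurable, and by construction $\EE\int_0^T\!\int_S|\xi_i^{(n)}(r,z)|_E^p\,\nu(dz)\,dr\le n$, so $\xi_i^{(n)}$ belongs to the $\EE L^p$-class to which Theorem 2.4 of \cite{zdzandme} applies.

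Crucially, the truncation $\xi\mapsto\xi\,\mathbf{1}_{[0,\tau_n(\xi)]}$ is a Borel self-map of $L^p([0,T];L^p(S,\nu;E))$: it decomposes as the continuous map $\xi\mapsto\int_0^\cdot\!\int_S|\xi(r,z)|_E^p\,\nu(dz)\,dr\in C([0,T];\RR)$ followed by the Borel entrance-time functional and the Borel cut-off. Consequently the hypothesis $\CL aw(\xi_1,\eta_1)=\CL aw(\xi_2,\eta_2)$ on $L^p([0,T];L^p(S,\nu;E))\times\CMM$ transfers to
$$
\CL aw(\xi_1^{(n)},\eta_1)=\CL aw(\xi_2^{(n)},\eta_2)
$$
on the same space. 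Theorem 2.4 of \cite{zdzandme} then yields
$$
\CL aw\big(I(\xi_1^{(n)},\eta_1),\xi_1^{(n)},\eta_1\big)=\CL aw\big(I(\xi_2^{(n)},\eta_2),\xi_2^{(n)},\eta_2\big)
$$
on $\DD([0,T];E)\times L^p([0,T];L^p(S,\nu;E))\times\CMM$.

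Now I pass to the limit $n\to\infty$. Because $\int_0^T\!\int_S|\xi_i(r,z)|_E^p\,\nu(dz)\,dr<\infty$ almost surely, $\tau^i_n\uparrow T$ almost surely. By the very definition of the integral, $I(\xi_i^{(n)},\eta_i)(t)=I(\xi_i,\eta_i)(t)$ for every $t\le\tau^i_n$, so on the event $\{\tau^i_n=T\}$ the triplets $(I(\xi_i^{(n)},\eta_i),\xi_i^{(n)},\eta_i)$ and $(I(\xi_i,\eta_i),\xi_i,\eta_i)$ coincide identically on $[0,T]$. Since $\PP(\tau^i_n<T)\to 0$, this gives uniform-in-$t$ convergence of the triplets in probability, which is stronger both than convergence in $\DD([0,T];E)\times L^p([0,T];L^p(S,\nu;E))\times\CMM$ and than convergence in $L^p([0,T];E)\times L^p([0,T];L^p(S,\nu;E))\times\CMM$. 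Equality of laws being preserved under convergence in probability, both (i) and (ii) follow.

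The main obstacle is of a technical nature and lies in two places: verifying that the truncation is Borel on path space so that the localisation preserves the joint law with $\eta$, and verifying the identity $I(\xi_i^{(n)},\eta_i)=I(\xi_i,\eta_i)$ on $[0,\tau^i_n]$ for integrals of integrands satisfying only the almost-sure integrability condition \eqref{E:Main_cond-1.01}. The latter identity is inherent in the construction of the integral as a limit of step-process integrals carried out in \cite{Brz+Haus_2009}, while the former is straightforward from continuity of the $L^p$ norm and Borel measurability of entrance times. Beyond that, the argument is purely mechanical.
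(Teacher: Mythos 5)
Your proposal is correct and follows essentially the same route as the paper's own proof: localisation by the entrance-time of $\int_0^t\!\int_S|\xi_i(r,z)|_E^p\,\nu(dz)\,dr$ at level $n$ (the paper's $\tau^R_i$), truncation of the integrand, application of Theorem 2.4 of \cite{zdzandme} to the truncated processes, and passage to the limit using that the truncation acts trivially on an event of probability tending to one. The only differences are presentational — you make explicit that the truncation is a Borel self-map of the path space (which the paper leaves implicit) and conclude via convergence in probability rather than the paper's direct computation over Borel rectangles.
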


\begin{proof}
In fact, Theorem \ref{equality} follows from Theorem 2.4 in \cite{zdzandme} by localization.
First, for a fixed $R>0$ let us first introduce the stopping times
$$
\tau^R_i=\inf_{t>0} \lk\{ \int_0^t|\xi(s)|_X^p \, ds \ge R\rk\}.
$$
Put $\xi^R_i := 1_{[0,\tau]} \, \xi$. Observe, using the shifted Haar projection defined in \eqref{haar-projection-shifted}
one obtains a sequence of simple functions $\{\fhs_n \xi^R_i:n\in\NN\}$ such that
$\PP^i$--a.s. $\fhs_n \xi^R_i\to \xi^R_i$ in $L^p(\INT;E)$. In addition
$$
\EE  \int_0^T \int_S\vert \xi_i^R(r,z)\vert_E^p\,
\nu(dz)\,dr\le R.
$$
Thus, Theorem 2.4 in \cite{zdzandme} is applicable and we have
$
\Law( (I ^R_1,\xi ^R _1,\eta_1))=\Law((I ^R_2,\xi^R _2,\eta_2))$ on 
$$
(\DD(\INT;X)\cap L^p(\INT ;X))\times  L^p(\INT;L^p(Z,\nu ;E))\times \CMM ,
$$
where
\begin{equation}  \label{E:Main_stoch_conv}
 I^R_i(t) = \int_0 ^ t \int_S
\xi^R_i(s,z)\, \tilde \eta_i(ds,dz), \quad t\in\INT.
\end{equation}

Let
$$
A_i^R:= \lk\{ \omega\in\Omega_i: \int_0^T \lk|\xi_i(s) \rk|_{L^p(Z,\nu ;E)}^p \, ds\le R\rk\}.
$$
Then, first,
on $A^R_i$, $\xi_i^R=\xi_i$, secondly, $A^ {R_1}\supset A^ {R_2}$ for $R_1>R_2$, and, thirdly, by Lemma 1.14 \cite{kallenberg},
 $\lim_{R\to\infty} \PP\lk( A^R_i\rk)=\PP\lk(\Omega\rk)=1$.
Take a set
\DEQS
\lqq{ B_1\times B_2\times B_3 \in \CB(\DD(\INT;X)\cap L^p(\INT ;B))}
&& \\
& &\times \CB( L^p(\INT;L^p(Z,\nu ;E)))\times\CB( \CMM).
\EEQS
Since $\xi^ R_i\le \xi_i$, we have by the dominated convergence Theorem
\DEQS
\lqq{
\PP_1\lk( (u_1,\xi_1,\eta)\in B_1\times B_2\times B_3\rk)} &&
\\
& = & \lim_{R\to \infty} \PP_1\lk( (u_1,\xi_1,\eta)\in B_1\times B_2\times B_3, \tau_1^R>t\rk)
\\
& = & \lim_{R\to \infty} \PP_1\lk( (u ^R_1,\xi^R_1,\eta)\in B_1\times B_2\times B_3\rk)
\\
& = & \lim_{R\to \infty} \PP_2\lk( (u ^R_2,\xi^R_2,\eta)\in B_1\times B_2\times B_3\rk)
\\
& = & \lim_{R\to \infty} \PP_2\lk( (u ^R_2,\xi^R_2,\eta)\in B_1\times B_2\times B_3, \tau_1^R>t\rk)
\\
& = & \PP_2\lk( (u _2,\xi_2,\eta)\in B_1\times B_2\times B_3\rk).
\EEQS
Now, the assertion follows.
\end{proof}

\del{
\begin{assumption}\label{det_cont}
The convolution operator $\CT:L^ p([0,T];E)\to C([0,T];X)$ defined by
$$
(\CT u)(t) := \int_0^ t e^ {-(t-s)A}\, F(u(s))\, ds ,\quad u\in L^ p([0,T];E),
$$
is continuous.
\end{assumption}
\begin{corollary}

\end{corollary}
}

\section{The Haar Projection}

\subsection{The Haar projection onto $L^ q$--spaces}

For $n\in\mathbb{N}$, let  $\Pi^n=\{ s^n_0=0<s^n_1<\cdots <s^n_{2^n}\}$
be a  partition of the interval $[0,T]$ defined  by $s_j ^ n=j\,2 ^ {-n}T$, $j=1,\cdots, 2^n$.
Each interval of the form $(s_{j-1} ^ n,s_j ^ n]$, where $n\in \mathbb{N}$ and $j=1,\ldots, 2^n$ is called a \it dyadic \rm interval.
For  $n\in \mathbb{N}$, the \it $j^ {th}$ element, for $j=1,\ldots, 2^n$,  of the Haar system of order $n$ \rm is the indicator function of the interval $(s_{j-1}^n,s_j^{n}]$, i.e.\
$1_{(s_{j-1}^{n},s_j^{n}]}$.
First, given a function $x:[0,T]\to Y$, $Y$ a Banach space, let us define
the averaging operator  $\iota_{j,n}:  L ^ p(0,T,Y)\to Y$ over the interval $(s^n_{j-1},s^n_{j}]$
by
\begin{equation}  \label{xdef-ij}
 \iota _{j,n}(x) :=
\frac{1}{s^n_{j}-s^n_{j-1}}\; \int_{s^ n_{j-1}}^{s ^ n_{j}} x(s)\,ds, \;  x\in  L ^ p([0,T],Y).
\end{equation}
For $n\in\mathbb{N}$, let $\fhs_n: L ^ p([0,T],Y)\to L ^ p([0,T],Y)$ be the \it shifted Haar projection \rm of order $n$, i.e.
\DEQSZ\label{haar-projection-shifted}
\fhs_n  x =\sum_{j=1} ^{2^n -1} 1_{(s^n_j,s^n_{j+1}]}\otimes \iota_{j,n}(x), \;\; x\in L ^ p(\INT;Y),
\EEQSZ
where we put ${\iota }_{0,n}=0$ for every $n\in\mathbb{N}$.
In the above, for $f\in L ^ p([0,T],\mathbb{R})$ and $y\in  Y$, by $f\otimes y$ we mean an element of $L ^ p([0,T],Y)$ defined by
$[0,T]\ni t\mapsto f(t) y\in Y$. For completeness, let us cite the following results taken from \cite[Appendix B]{uniquness1}.
\begin{proposition}
\label{cont-haar-shift}
The following holds:
\begin{trivlist}
\item[(i)]
For any $n\in\mathbb{N}$, the shifted Haar projection $\fhs_n:L ^ p([0,T] ;Y)\to L ^ p([0,T];Y)$ is a continuous operator.
\item[(ii)]
For all $x\in  L ^ p([0,T] ;Y)$, $\fhs_n x\to x$ in $L ^ p([0,T] ;Y)$.
\end{trivlist} 
\end{proposition}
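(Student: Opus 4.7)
The plan is to establish (i) first via a direct application of Jensen's inequality, obtaining the stronger fact that $\mathfrak{h}^s_n$ is a contraction on $L^p([0,T];Y)$. Since the intervals $(s_{j-1}^n, s_j^n]$ are disjoint, and since for any $y\in Y$,
\[
\|\iota_{j,n}(x)\|_Y^p \le \frac{1}{s_j^n-s_{j-1}^n}\int_{s_{j-1}^n}^{s_j^n}\|x(s)\|_Y^p\,ds
\]
by the convexity of $t\mapsto t^p$ on $[0,\infty)$, we can compute
\[
\|\mathfrak{h}^s_n x\|_{L^p([0,T];Y)}^p = \sum_{j=1}^{2^n-1}(s_{j+1}^n-s_j^n)\|\iota_{j,n}(x)\|_Y^p \le \sum_{j=1}^{2^n-1}\int_{s_{j-1}^n}^{s_j^n}\|x(s)\|_Y^p\,ds \le \|x\|_{L^p([0,T];Y)}^p,
\]
where in the second step I used the equality $s_{j+1}^n-s_j^n = s_j^n-s_{j-1}^n = 2^{-n}T$. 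This yields (i) with operator norm bounded by $1$.

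For (ii) the plan is a standard density argument. First I would verify convergence on the dense subset $C([0,T];Y)\subset L^p([0,T];Y)$. For $x$ continuous (hence uniformly continuous on the compact $[0,T]$), given $\varepsilon>0$ choose $n$ large enough so that $\|x(s)-x(t)\|_Y<\varepsilon$ whenever $|s-t|\le 2\cdot 2^{-n}T$. Then for $t\in(s_j^n,s_{j+1}^n]$ with $j\ge 1$, the value $\mathfrak{h}^s_n x(t)=\iota_{j,n}(x)$ is an average of $x(s)$ over $s\in(s_{j-1}^n,s_j^n]$, and any such $s$ satisfies $|s-t|\le 2\cdot 2^{-n}T$, giving the pointwise estimate $\|\mathfrak{h}^s_n x(t)-x(t)\|_Y<\varepsilon$. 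On the small initial interval $(0,2^{-n}T]$, where $\mathfrak{h}^s_n x$ vanishes by convention, the $L^p$ contribution is controlled by $(2^{-n}T)^{1/p}\sup_{[0,T]}\|x\|_Y$, which tends to zero. Combining these two pieces yields $\|\mathfrak{h}^s_n x - x\|_{L^p([0,T];Y)}\to 0$ for continuous $x$.

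To pass from continuous functions to general $x\in L^p([0,T];Y)$, I would use the contraction bound from (i): given $\varepsilon>0$, pick $y\in C([0,T];Y)$ with $\|x-y\|_{L^p}<\varepsilon$, and write
\[
\|\mathfrak{h}^s_n x - x\|_{L^p} \le \|\mathfrak{h}^s_n(x-y)\|_{L^p} + \|\mathfrak{h}^s_n y - y\|_{L^p} + \|y-x\|_{L^p} \le 2\varepsilon + \|\mathfrak{h}^s_n y - y\|_{L^p}.
\]
Letting $n\to\infty$ and then $\varepsilon\to 0$ completes the proof. The only mildly delicate point is the boundary effect from the shift (the initial dyadic interval on which $\mathfrak{h}^s_n x$ is set to zero), but since its length is $2^{-n}T\to 0$ this contributes negligibly in the $L^p$ norm; no serious obstacle is expected.
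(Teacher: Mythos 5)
Your proof is correct: the Jensen/contraction bound for (i) and the density-of-continuous-functions argument with the $3\varepsilon$ splitting for (ii), including the careful treatment of the vanishing first dyadic interval, is exactly the standard argument. Note that the paper itself gives no proof of this proposition but merely cites it from Appendix B of \cite{uniquness1}, where essentially this same reasoning is carried out, so there is nothing further to reconcile.
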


\begin{remark}\label{shseq}
Observe, for any $\xi\in\CN_p([0,T] ;X)$, the process $[0,T ]\ni t\mapsto \fhs_n x(t)$ is simple, left continuous and predictable and
the sequence $\{ \fhs_n \xi:n\in\NN\}$ converges to $\xi$ in $L ^p([0,T];Y)$.
\end{remark}

\subsection{The  Haar projection onto the Skorohood space}

If the underlying space is the Skorohood space, the Haar projection have to defined by another way.
For the Skorohood space we refer to  Billingsley \cite{billingsley}, Ethier and
Kurtz \cite{MR838085} and Jacod and Shiryaev
\cite{jacod}.

Let $(Y,{|\cdot |}_{Y})$ be a separable Banach space.
The space $\DD([0,1];Y)$ denotes the space of all right continuous functions $x:[0,1]\to Y$
with left-hand limits. Let $\Lambda $ denote the class of all strictly increasing continuous functions $\lambda:[0,1]\to [0,1]$ such that $\lambda(0)=0$ and {$\lambda(1)=1$}.
Obviously any element $\lambda \in \Lambda$ is a homeomorphism of $[0,1]$ onto itself.
Let us define the Prohorov metric $d_0$ by
\begin{equation}\label{eqn-dlog}
\begin{array}{rcl}
\Vert \lambda\Vert_{\rm log} &:=& \sup_{t\not=s\in [0,1]} \Big\vert \log \frac{\lambda(t)-\lambda(s)}{t-s}\Big\vert,\\
\Lambda_{\rm log}&:=& \big\{ \lambda \in \Lambda:  \Vert \lambda\Vert_{\rm log} <\infty\big\}\\
d_0(x,y)&:=&\inf \Big\{ \Vert \lambda\Vert_{\rm log} \vee  \sup_{t\in [0,1]} {|x(t)-y(\lambda(t))|}_{Y}: \, \lambda \in \Lambda_{\rm log}\Big\}.
\end{array}
\end{equation}

The space $\DD([0,1];Y)$ equipped with the metric $d_0$ is a separable complete metric space.
Here, in this section we shortly introduce
the so called 'dyadic projection' onto the Haar system. For more detailed information we refer to \cite{uniquness1}.
\begin{definition} Assume that $n\in \mathbb{N}^\ast$.
The $n$-th order  dyadic projection is an operator $\fh^ \DD_n: \DD(\INT ;E)\to\DD(\INT ;E)$,  such that
$\fh^ \DD_n x$, $x\in \DD(\RR_0^+;E)$, is defined by
\DEQSZ\label{dyadic}
(\fh^ \DD_n x)(t) := \sum_{i=0}^\infty \,1_{(2 ^ {-n}i,2 ^ {-n}(i+1)]}(t) \; x( 2 ^ {-n}i), \;\; t\in \INT.
\EEQSZ
\end{definition}

An  important  property  of  the dyadic projection is  given
in the following result.

\begin{proposition}\label{chap:jacod}
The following holds.
\begin{enumerate}
\item  If $x\in\DD(\INT ;E)$ then  $\lim_{n\to\infty} d_0(x,\fh^ \DD_n  x)=0$;
\item if $K\subset \DD(\INT ;E)$ is  compact, then
{$$
\lim_{n\to\infty} \,\sup_{x\in K}\, d_0(x,\fh^ \DD _n x)=0.
$$}
\end{enumerate}
\end{proposition}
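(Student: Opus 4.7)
The plan is, for each $x \in \DD([0,T];E)$, to construct a piecewise affine strictly increasing time change $\lambda_n \in \Lambda_{\log}$ that aligns the ``effective jump times'' of $x$ with the dyadic grid $\{k 2^{-n}\}_{k=0}^{2^n}$ on which $\fh^{\DD}_n x$ is constant, and then to verify that both $\|\lambda_n\|_{\log}$ and $\sup_t |x(t) - \fh^{\DD}_n x(\lambda_n(t))|_E$ tend to zero. The essential input is the càdlàg modulus $w'$ of Billingsley/Jacod--Shiryaev: for every $\eps > 0$ there exist $\delta > 0$ and a partition $0 = \tau_0 < \tau_1 < \cdots < \tau_k = T$ with $\min_i(\tau_i - \tau_{i-1}) > \delta$, on each half-open piece $[\tau_{i-1},\tau_i)$ of which the oscillation of $x$ is less than $\eps$.

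For part (1), fix $\eps > 0$ and such a partition. For $n$ so large that $2^{-n} \ll \delta$, I would set $k_i := \lceil 2^n \tau_i \rceil$ (with $k_0 = 0$ and $k_k = 2^n$) and let $\lambda_n : [0,T] \to [0,T]$ be the unique continuous strictly increasing piecewise affine map with $\lambda_n(\tau_i) = 2^{-n} k_i$. Since $|2^{-n} k_i - \tau_i| \le 2^{-n}$ while $\tau_i - \tau_{i-1} > \delta$, every affine piece has slope inside $[1 - 2 \cdot 2^{-n}/\delta,\, 1 + 2 \cdot 2^{-n}/\delta]$, so $\|\lambda_n\|_{\log} \le \log(1 - 2 \cdot 2^{-n}/\delta)^{-1} \to 0$. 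Moreover, by construction, for any $t \in [\tau_{i-1}, \tau_i)$ the point $\lambda_n(t)$ lies in $[2^{-n} k_{i-1}, 2^{-n} k_i)$; on this range $\fh^{\DD}_n x$ attains only values $x(2^{-n} j)$ with $2^{-n} j$ lying in (essentially) the same piece $[\tau_{i-1}, \tau_i)$, so the $\eps$-oscillation bound gives $|x(t) - \fh^{\DD}_n x(\lambda_n(t))|_E < \eps$. Combining, $d_0(x, \fh^{\DD}_n x) \le \|\lambda_n\|_{\log} \vee \eps$, which is smaller than $2\eps$ for $n$ large, and arbitrariness of $\eps$ yields (1).

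For part (2), I would invoke the Jacod--Shiryaev characterization of compactness in $\DD([0,T];E)$: compactness of $K$ is equivalent to pointwise relative compactness (on a dense set of times) together with $\sup_{x \in K} w'(x,\delta) \to 0$ as $\delta \to 0$. This yields, for each $\eps > 0$, a single $\delta > 0$ and, by a routine refinement argument, a partition $\{\tau_i\}$ independent of $x \in K$ on which every $x$ has oscillation below $\eps$. The construction of $\lambda_n$ from (1) then becomes uniform in $x$, producing $\sup_{x \in K} d_0(x, \fh^{\DD}_n x) \le 2\eps$ eventually. The main obstacle is the careful bookkeeping in (1) at dyadic boundaries caused by the left-open/right-closed convention in the definition of $\fh^{\DD}_n$ (one may need to slightly shift $k_i$ by one to land in the correct cell); in (2) the additional subtlety is passing from the $x$-dependent partition to a uniform one, which is precisely what the uniform $w'$ modulus on compact sets is designed to enable.
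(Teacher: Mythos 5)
The paper does not actually prove Proposition \ref{chap:jacod}; it quotes it from Appendix B of \cite{uniquness1}, so your argument has to be judged on its own terms. Your part (1) is the standard time-change argument and is essentially correct, but the difficulty you dismiss as ``bookkeeping at dyadic boundaries'' is more serious than you suggest --- it is a defect of the definition \eqref{dyadic}, not of your proof. As written, $\fh^\DD_n x$ is constant on the \emph{left-open} cells $(2^{-n}i,2^{-n}(i+1)]$ with the value taken at the left endpoint, hence is left-continuous with right limits; for $x=1_{[c,T]}$ one gets $\fh^\DD_n x=1_{(a_n,T]}$ with $a_n\ge c$, and $d_0(x,\fh^\DD_n x)\ge 1$ for \emph{every} $n$, because no continuous increasing $\lambda$ can map the left-closed level set $\{x=1\}=[c,T]$ onto the left-open set $\{\fh^\DD_n x=1\}=(a_n,T]$. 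So the statement is false under the literal definition, and no shift of your indices $k_i$ can repair it; the cells must be read as $[2^{-n}i,2^{-n}(i+1))$. With that (clearly intended) convention your bookkeeping closes up: for $s\in[2^{-n}k_{i-1},2^{-n}k_i)$ the projection uses only grid points $2^{-n}j$ with $k_{i-1}\le j\le k_i-1$, all of which lie in $[\tau_{i-1},\tau_i)$, and your slope estimate $\|\lambda_n\|_{\log}\le-\log(1-2^{1-n}/\delta)$ and the oscillation bound are correct.

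Part (2) contains a genuine gap: there is in general \emph{no} partition $\{\tau_i\}$ independent of $x\in K$ on which every $x\in K$ has oscillation below $\eps$. For instance $K=\{1_{[a,T]}:a\in[T/4,T/2]\}$ is compact in $\DD([0,T];\RR)$, and any partition keeping the oscillation of $1_{[a,T]}$ below $1$ must contain $a$ among its points, so no finite partition serves all $a$ simultaneously; no refinement argument can produce one. Fortunately your proof of (1) never needs a common partition, only common constants: choose $\delta>0$ with $\sup_{x\in K}w'(x,\delta)<\eps$ (which the compactness criterion provides); then for each $x\in K$ there is an $x$-\emph{dependent} partition whose mesh is bounded below by the \emph{same} $\delta$ and on whose pieces $x$ oscillates by less than the \emph{same} $\eps$, and the resulting bound $d_0(x,\fh^\DD_n x)\le\max\bigl(\eps,\,-\log(1-2^{1-n}/\delta)\bigr)$ depends on $x$ only through $\delta$ and $\eps$. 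Replacing your false uniform-partition claim by this observation repairs part (2) with no new ingredients.
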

Finally, we need the fact, that the integral operator is continuous operator on $\DD(\INT ,E)$.
\begin{proposition}\label{integral-skorohod}
If $(x_n)\to x$ in  $\DD(\RR_0^+,E)$, then for all $s,t\in\Int $ we have
$$
\int_s ^ t  x_n(r)\, dr \to \int_s ^ t x(r)\, dr
$$
in $E$ as $n\to\infty$.
\end{proposition}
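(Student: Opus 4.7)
The plan is to reduce Proposition \ref{integral-skorohod} to an application of the Bochner--Lebesgue dominated convergence theorem, using two standard consequences of Skorokhod convergence $x_n\to x$ in $\DD(\RR_0^+;E)$.

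First, I would show that $x_n(r)\to x(r)$ for every $r$ belonging to the continuity set $C(x)$ of $x$. Pick time changes $\lambda_n\in\Lambda_{\log}$ with $\Vert\lambda_n\Vert_{\log}\to 0$ (hence $\lambda_n\to\mathrm{id}$ uniformly on compacts) and
$$
\sup_{r\in[0,t+1]}|x_n(r)-x(\lambda_n(r))|_E\longrightarrow 0.
$$
For $r\in C(x)$ one has
$$
|x_n(r)-x(r)|_E\le|x_n(r)-x(\lambda_n(r))|_E+|x(\lambda_n(r))-x(r)|_E,
$$
and the second term vanishes by $\lambda_n(r)\to r$ together with the continuity of $x$ at $r$ (this uses that the jump of $x$ at $r$ is zero, so the one-sided limits agree). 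Since $x$ is c\`adl\`ag with values in the separable Banach space $E$, its discontinuity set $[s,t]\setminus C(x)$ is at most countable, hence of Lebesgue measure zero. Therefore $x_n(r)\to x(r)$ for Lebesgue almost every $r\in[s,t]$.

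Second, I would establish a uniform bound $K:=\sup_n\sup_{r\in[s,t]}|x_n(r)|_E<\infty$. Using the same $\lambda_n$ as above and denoting $M=\sup_n\lambda_n(t+1)<\infty$,
$$
|x_n(r)|_E\le|x(\lambda_n(r))|_E+\Vert x_n-x\circ\lambda_n\Vert_\infty\le\sup_{u\in[0,M]}|x(u)|_E+1
$$
for all sufficiently large $n$, while the finitely many remaining indices are bounded separately; the supremum on the right is finite because a c\`adl\`ag function on a compact interval is bounded.

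With these two ingredients in hand, the constant $K$ is an integrable majorant on the bounded interval $[s,t]$ for a sequence of $E$-valued Bochner measurable functions converging a.e.\ to $x$. The Bochner version of the dominated convergence theorem (valid in any separable Banach space) yields
$$
\int_s^t x_n(r)\,dr\longrightarrow\int_s^t x(r)\,dr\quad\text{in }E,
$$
which is the desired conclusion. The one point that requires genuine care rather than a black-box citation is the first step; however, the pointwise convergence at continuity points is a classical property of the Skorokhod topology (Billingsley, Chapter~3 or Jacod--Shiryaev, Chapter~VI), so I would invoke it rather than reprove it.
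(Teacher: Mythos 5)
Your proof is correct and complete: pointwise convergence at the (at most countable) set of continuity points of $x$ via the time changes $\lambda_n$, a uniform bound from the boundedness of a c\`adl\`ag function on compacts, and the Bochner dominated convergence theorem together give the claim. The paper itself states Proposition \ref{integral-skorohod} without any proof (the surrounding appendix defers to the references for details), so there is nothing to compare against; your argument is the standard one and fills that gap. The only point worth tightening is the choice of the auxiliary interval: for convergence in $\DD(\RR_0^+;E)$ one should restrict to $[0,T]$ for some continuity point $T$ of $x$ with $T>t$ (such $T$ exists since the discontinuities are countable), rather than fixing $[0,t+1]$ outright, but this is cosmetic and does not affect the validity of the argument.
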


{
\section{Polish measure spaces}\label{pomesp}

Let $(S,\CS)$ be a Polish space and let $S_n\in\CS$ satisfy $S_n\uparrow S$. Then there exists a metric $\varrho$ on $S$ such that $(S,\varrho)$ is a complete separable metric space, $\mathscr B(S,\varrho)=\CS$ and $S_n$ is closed for every $n\in\Bbb N$, see e.g. \cite[(13.5) page 83]{kechris}. Consider the L\'evy-Prokhorov metric on $M_+(S,\varrho)$
$$
\pi(\mu,\nu)=\inf\,\{\varepsilon>0:\mu(A)\le\nu(A^\varepsilon)+\varepsilon,\,\nu(A)\le\mu(A^\varepsilon)+\varepsilon,\,\forall A\in\CS\}
$$
where $A^\varepsilon=\{x\in S:\exists a\in A,\,\varrho(x,a)<\varepsilon\}$. Then $(M_+(S,\varrho),\pi)$ is a complete separable metric space and $\pi(\mu_n,\mu)\to 0$ iff
$$
\int_Sf\,d\mu_n\to\int_Sf\,d\mu,\qquad\forall f\in C_b(S,\varrho),
$$
see e.g.\ \cite[page 72-73]{billingsley}, where the proof for probability measures can be quite easily adapted to finite non-negative measures.

\begin{lemma}\label{lc1} The $\sigma$-algebra $\mathcal M_+(S)$ on $M_+(S)$ generated by the mappings $\mu\mapsto\mu(A)$, $A\in\CS$ coincides with the Borel $\sigma$-algebra $\mathscr B(M_+(S),\pi)$.
\end{lemma}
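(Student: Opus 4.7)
The plan is to establish the two inclusions $\mathcal M_+(S)\subseteq\mathscr B(M_+(S),\pi)$ and $\mathscr B(M_+(S),\pi)\subseteq\mathcal M_+(S)$ by different means: the first by the Portmanteau theorem together with a Dynkin class argument, the second by testing against a countable family of Lipschitz functions.

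For the first inclusion, I would invoke the Portmanteau theorem in the version valid for finite non-negative Borel measures on the separable metric space $(S,\varrho)$: $\mu\mapsto\mu(F)$ is upper semicontinuous on $(M_+(S),\pi)$ for every closed $F\subseteq S$, and $\mu\mapsto\mu(U)$ is lower semicontinuous for every open $U\subseteq S$. The standard proof for probability measures (see e.g. \cite{billingsley}) carries over to $M_+(S)$ once one observes that $\mu\mapsto\mu(S)$ is $\pi$-continuous, obtained by taking $F=S$. Both maps are then Borel measurable on $(M_+(S),\pi)$. To extend measurability to arbitrary $A\in\CS$, consider
$$
\mathcal D=\bigl\{A\in\CS:\mu\mapsto\mu(A)\text{ is }\mathscr B(M_+(S),\pi)\text{-measurable}\bigr\}.
$$
This is a $\lambda$-system (closure under complements via $\mu(S\setminus A)=\mu(S)-\mu(A)$, closure under disjoint countable unions via $\mu(\bigsqcup_n A_n)=\sum_n\mu(A_n)$) containing the $\pi$-system of open subsets of $S$, so Dynkin's theorem gives $\mathcal D=\CS$.

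For the second inclusion I would first observe that whenever $f:S\to\Bbb R$ is bounded and $\CS$-measurable, the integration map $I_f:\mu\mapsto\int_S f\,d\mu$ is $\mathcal M_+(S)$-measurable: this is trivial for $f=1_A$ with $A\in\CS$, extends by linearity to simple functions and by monotone limits to bounded measurable functions. Next, fixing a countable dense subset $\{x_j:j\in\Bbb N\}$ of $S$, I would enumerate the countable family
$$
\{1\}\cup\bigl\{(1-m\varrho(\cdot,x_j))^+:j,m\in\Bbb N\bigr\}\subset C_b(S,\varrho)
$$
as $\{g_k:k\in\Bbb N\}$. The classical bounded Lipschitz characterisation of weak convergence of measures on separable metric spaces (Portmanteau combined with uniform density of $\Bbb Q$-linear combinations of the $g_k$'s in the relevant unit ball) gives that $\pi(\mu_n,\mu)\to 0$ if and only if $I_{g_k}(\mu_n)\to I_{g_k}(\mu)$ for every $k\in\Bbb N$. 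Both the $\pi$-topology and the initial topology induced on $M_+(S)$ by $\{I_{g_k}\}_k$ are metrizable, hence determined by their convergent sequences, and therefore coincide. The latter topology admits the countable subbase $\{I_{g_k}^{-1}((a,b)):k\in\Bbb N,\,a,b\in\Bbb Q\}$, whence
$$
\mathscr B(M_+(S),\pi)=\sigma(I_{g_k}:k\in\Bbb N)\subseteq\mathcal M_+(S).
$$

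The delicate step is the sequential equivalence between $\pi$-convergence on $M_+(S)$ and the countable family of test integrals $\{I_{g_k}\}$; this is classical for probability measures, and the only additional ingredient needed for $M_+(S)$ is control of the total mass, which is supplied by the constant function $g_0\equiv 1$. Everything else amounts to a routine Dynkin or monotone class bookkeeping.
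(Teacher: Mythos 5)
Your first inclusion is correct and is essentially the paper's own argument: the paper likewise derives Borel measurability of $\mu\mapsto\mu(A)$ from upper (resp.\ lower) semicontinuity on closed (resp.\ open) sets, merely leaving the Dynkin-class bookkeeping implicit. The problem is the second inclusion. First, the justification you offer for the key equivalence is not true: $\Bbb Q$-linear combinations of $\{1\}\cup\{(1-m\varrho(\cdot,x_j))^+\}$ are not uniformly dense in any unit ball that controls weak convergence --- already for $S=\Bbb R$ every finite linear combination is constant outside a compact set and therefore stays at uniform distance at least $1$ from a bounded Lipschitz function that keeps oscillating at infinity. Second, and more seriously, the asserted equivalence ``$\pi(\mu_n,\mu)\to0$ iff $I_{g_k}(\mu_n)\to I_{g_k}(\mu)$ for all $k$'' is itself false for a general Polish $S$. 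By the layer-cake formula, $\int_S(1-m\varrho(x,x_j))^+\,d\mu(x)=\int_0^1\mu(B(x_j,(1-t)/m))\,dt$, so each $I_{g_k}(\mu)$ depends on $\mu$ only through the measures of balls. By a classical example of R.~O.~Davies (Mathematika 18 (1971), 157--160) there is a compact (hence Polish) metric space carrying two distinct Borel probability measures that agree on every ball; for such an $S$ your family $\{I_{g_k}\}$ does not even separate the points of $M_+(S)$ (the constant function gives nothing extra, both measures having total mass $1$), so the initial topology it induces is not Hausdorff and $\sigma(I_{g_k}:k\in\Bbb N)$ is strictly contained in $\mathscr B(M_+(S),\pi)$. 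Your final display is therefore wrong in this generality and the inclusion $\mathscr B(M_+(S),\pi)\subseteq\mathcal M_+(S)$ is not established.

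That inclusion is of course true, but it must be obtained without passing through this family of test functions. The paper does it directly from the definition of the L\'evy--Prokhorov metric: fixing a countable basis $\mathcal G$ of open sets of $(S,\varrho)$ closed under finite unions, one checks that for $\theta\in M_+(S)$ and $r>0$ the ball $\{\mu:\pi(\mu,\theta)<r\}$ equals $\bigcup_{\varepsilon\in\Bbb Q\cap(0,r)}\bigcap_{A\in\mathcal G}\{\mu:\mu(A)\le\theta(A^\varepsilon)+\varepsilon,\ \theta(A)\le\mu(A^\varepsilon)+\varepsilon\}$, a set that manifestly belongs to $\mathcal M_+(S)$, and then uses separability of $(M_+(S),\pi)$ to write every open set as a countable union of such balls. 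If you prefer to salvage your route via test functions, you must replace the tent functions by a family whose integrals genuinely determine weak convergence, e.g.\ $\{1\}\cup\{\min(1,m\,\mathrm{dist}(\cdot,S\setminus A)):A\in\mathcal G,\ m\in\Bbb N\}$ with $\mathcal G$ as above; the verification then proceeds through the one-sided Portmanteau inequality $\liminf_n\mu_n(A)\ge\mu(A)$ on the basis, not through a uniform density claim.
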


\begin{proof} The mapping $\mu\mapsto\mu(A)$ is upper semicontinuous on $(M_+(S),\pi)$ for every $A\subseteq S$ closed and lower semicontinuous for every $A\subseteq S$ open, hence Borel measurable for every $A\in\CS$. In particular, $\mathcal M_+(S)\subseteq\mathscr B(M_+(S),\pi)$. On the other hand, let $\mathcal G$ be a countable basis of open sets in $(S,\varrho)$ closed under finite unions. Then
$$
\{\mu:\,\pi(\mu,\theta)<r\}=\bigcup_{\varepsilon\in\Bbb Q\cap(0,r)}\bigcap_{A\in\mathcal G}\{\mu:\,\mu(A)\le\theta(A^\varepsilon)+\varepsilon,\,\theta(A)\le\mu(A^\varepsilon)+\varepsilon\}\in\mathcal M_+(S).
$$
Hence open balls in $(M_+(S),\pi)$ belong to $\mathcal M_+(S)$ and since $(M_+(S),\pi)$ is separable, every open set is a countable union of open balls. Consequently, every open set in $(M_+(S),\pi)$ belong to $\mathcal M_+(S)$, hence $\mathscr B(M_+(S),\pi)\subseteq\mathcal M_+(S)$.
\end{proof}

\begin{lemma}\label{lc2} The set of integer-valued measures $M_{\Bbb N}(S)$ is closed in $(M_+(S,\varrho),\pi)$.
\end{lemma}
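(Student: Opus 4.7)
The plan is to show that every limit (in the L\'evy--Prokhorov metric $\pi$) of integer-valued finite measures is itself a finite sum of Dirac masses. I therefore take $\mu_n \in M_{\mathbb{N}}(S)$ with $\pi(\mu_n,\mu) \to 0$ for some $\mu \in M_+(S,\varrho)$ and will show $\mu \in M_{\mathbb{N}}(S)$.

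First I would test the weak convergence (which, by the characterisation recalled just before Lemma~\ref{lc1}, is equivalent to $\int f\,d\mu_n \to \int f\,d\mu$ for every $f \in C_b(S,\varrho)$) against $f \equiv 1$, yielding $\mu_n(S) \to \mu(S)$. Since $\mu_n(S) \in \mathbb{N}$ and the limit is finite, the sequence is eventually constant, say $\mu_n(S) = k$ for $n \geq N$, with $\mu(S) = k$. Using that every integer-valued finite Borel measure on a Polish space is a finite sum of Dirac masses (atomless Borel measures on Polish spaces take all intermediate values, so any non-atomic part would produce fractional values contradicting integrality), I enumerate the atoms of $\mu_n$ with multiplicity to write $\mu_n = \sum_{i=1}^{k} \delta_{x_i^n}$ for some $x_i^n \in S$.

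Next, since $(S,\varrho)$ is Polish and the total masses $\mu_n(S)$ are bounded, Prokhorov's theorem applied to the weakly convergent sequence $\{\mu_n\}$ yields tightness: for $\varepsilon = 1/2$ there exists a compact $K \subseteq S$ with $\mu_n(S \setminus K) < 1/2$ for every $n$. But $\mu_n(S \setminus K)$ is a non-negative integer strictly less than $1$, hence equal to $0$, so $x_i^n \in K$ for all $i$ and all $n \geq N$. Extracting a convergent subsequence in the compact product $K^k$ gives $x_i^{n_j} \to y_i \in K$ for every $i=1,\dots,k$. For any $f \in C_b(S,\varrho)$,
\[
\int_S f\,d\mu_{n_j} \;=\; \sum_{i=1}^k f(x_i^{n_j}) \;\longrightarrow\; \sum_{i=1}^k f(y_i),
\]
so $\mu_{n_j} \to \sum_{i=1}^k \delta_{y_i}$ weakly; uniqueness of weak limits then gives $\mu = \sum_{i=1}^k \delta_{y_i} \in M_{\mathbb{N}}(S)$.

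The only nontrivial ingredient is the invocation of Prokhorov's theorem: once it provides tightness, the integer-valued hypothesis upgrades the mass bound $\mu_n(S\setminus K) < 1/2$ into the exact identity $\mu_n(S\setminus K) = 0$, forcing every $\mu_n$ (for $n$ large) to be supported in a single compact set. After that, the proof reduces to a routine Bolzano--Weierstrass extraction followed by passage to the limit in the test-function characterisation of $\pi$-convergence.
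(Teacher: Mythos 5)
Your proof is correct, but it follows a genuinely different route from the paper's. The paper argues locally and by contradiction: if $\mu(A)$ were strictly between two consecutive integers for some $A\in\CS$, inner regularity produces a compact $C\subseteq A$ with $k<\mu(C)\le\mu(A)<k+1$, and then a $\delta>0$ with $k<\mu(C)-\delta$ and $\mu(C^{2\delta})+\delta<k+1$; the two defining inequalities of the L\'evy--Prokhorov metric squeeze $\mu_n(C^{\delta})$ strictly between $k$ and $k+1$ once $\pi(\mu_n,\mu)<\delta$, which is impossible for an integer. That argument uses nothing beyond inner regularity and the explicit form of $\pi$. You instead argue globally: you classify each $\mu_n$ as a finite sum of $k$ Dirac masses (via the eventual constancy of the integer total masses and the fact that a nonzero non-atomic part would take non-integer values), invoke the tightness half of Prokhorov's theorem to force all atoms into one compact set (the pleasant trick being that $\mu_n(S\setminus K)<1/2$ upgrades to $=0$ by integrality), and then extract convergent atom configurations in $K^k$. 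Your approach costs more machinery --- the structure theorem for integer-valued measures, Prokhorov tightness for convergent sequences, and a subsequence extraction --- but it buys a sharper conclusion: it exhibits the limit explicitly as $\sum_{i=1}^k\delta_{y_i}$ with the $y_i$ obtained as limits of the atoms of $\mu_{n_j}$, rather than merely certifying that $\mu$ is integer-valued. For the purpose of Lemma \ref{lc2} alone, the paper's two-line squeeze is the leaner argument; both are valid on a Polish space, where inner regularity by compact sets and the tightness of convergent sequences are each available.
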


\begin{proof}
Let $\pi(\mu_n,\mu)\to 0$, $\mu_n$ be integer-valued and $k<\mu(A)<k+1$ for some integer $k$ and some $A\in\CS$. By regularity, we can find a compact $C\subseteq A$ such that $k<\mu(C)\le\mu(A)<k+1$ and $\delta>0$ such that $k<\mu(C)-\delta$ and $\mu(C^{2\delta})+\delta<k+1$. If $\pi(\mu_n,\mu)<\delta$ then
$$
k<\mu(C)-\delta\le\mu_n(C^\delta)\le\mu(C^{2\delta})+\delta<k+1,
$$
which cannot happen as $\mu_n(C^\delta)$ is an integer.
\end{proof}

\begin{lemma}\label{lc3} $(M_{\bar \NN}(\{S_n\}),\CM_{\bar \NN}(\{S_n\}))$ is a Polish space.
\end{lemma}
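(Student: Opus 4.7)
The plan is to realise $M_{\bar \NN}(\{S_n\})$ as a Borel subset of a countable product of Polish spaces, and then invoke the standard fact that every Borel subset of a Polish space, equipped with its trace Borel $\sigma$-algebra, admits a Polish topology generating that $\sigma$-algebra.

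First I would observe that each $S_n$ is closed in $(S,\varrho)$, hence itself Polish. Applied with $S_n$ in place of $S$, the discussion preceding Lemma \ref{lc1} endows $M_+(S_n)$ with a L\'evy--Prokhorov metric making it a complete separable metric space, Lemma \ref{lc1} identifies its Borel $\sigma$-algebra with the one generated by the evaluations, and Lemma \ref{lc2} shows that $M_{\Bbb N}(S_n)$ is a closed, hence Polish, subspace. The countable product $P:=\prod_{n\in\Bbb N}M_{\Bbb N}(S_n)$ is therefore Polish. I would then introduce the restriction map $\Phi:M_{\bar \NN}(\{S_n\})\to P$ sending $\theta$ to $(\theta|_{S_n})_n$; this is well defined because $\theta(S_n)<\infty$, and injective because monotone convergence gives $\theta(B)=\lim_n\theta(B\cap S_n)$ for every $B\in\CS$. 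Moreover $\Phi$ is bimeasurable: each coordinate evaluation $(\mu_m)_m\mapsto\mu_n(A)$ with $A\in\mathcal{B}(S_n)$ agrees with $\theta\mapsto i_A(\theta)$, and conversely each $i_B$, $B\in\CS$, coincides with the $P$-Borel function $(\mu_m)_m\mapsto\lim_n\mu_n(B\cap S_n)$.

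The crux of the argument, and the step I expect to require the most care, is showing that the image $K:=\Phi(M_{\bar \NN}(\{S_n\}))$ is Borel in $P$. This set consists precisely of the consistent sequences $(\mu_n)$ satisfying $\mu_{n+1}(A)=\mu_n(A)$ for every $A\in\mathcal{B}(S_n)$ and every $n\in\Bbb N$. Picking a countable $\pi$-system $\mathcal{G}_n$ generating $\mathcal{B}(S_n)$ (possible since $S_n$ is second countable), $K$ becomes the countable intersection, over $n$ and $A\in\mathcal{G}_n$, of the preimages of $\{0\}$ under the Borel maps $(\mu_m)_m\mapsto\mu_{n+1}(A)-\mu_n(A)$, and is therefore Borel. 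One cannot hope for more, since the restriction $M_+(S_{n+1})\to M_+(S_n)$ is not L\'evy--Prokhorov continuous (mass can drift onto the boundary of $S_n$ in a limit), so $K$ will in general be neither closed nor $G_\delta$ in $P$; but Borel measurability suffices. Indeed, a Borel subset of a Polish space admits a finer Polish topology generating its trace Borel $\sigma$-algebra, and transporting this topology across the bimeasurable bijection $\Phi$ equips $M_{\bar \NN}(\{S_n\})$ with a Polish topology whose Borel $\sigma$-algebra is exactly $\CM_{\bar \NN}(\{S_n\})$, as required.
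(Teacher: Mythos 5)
Your proposal is correct, but it diverges from the paper's proof at the decisive step. Both arguments embed $M_{\bar \NN}(\{S_n\})$ into a countable product of spaces of finite integer-valued measures via the restrictions $\theta\mapsto(\theta(\cdot\cap S_n))_n$ and use Lemmas \ref{lc1} and \ref{lc2}. The paper, however, exploits the citation of \cite[(13.5)]{kechris} to choose the Polish topology on $S$ so that each $S_n$ is \emph{clopen}; then $\partial S_n=\emptyset$, restriction of measures to $S_n$ is continuous for weak convergence, and the image of the embedding is shown to be \emph{closed} in the product, which yields directly that the explicit metric $\rho(\mu,\nu)=\sum_n 2^{-n}\min\{1,\pi(\mu(\cdot\cap S_n),\nu(\cdot\cap S_n))\}$ is complete and separable; a short separate argument then identifies $\mathscr B(\rho)$ with $\CM_{\bar \NN}(\{S_n\})$. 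You instead work only with $S_n$ closed, correctly observe that the image of consistent sequences need then be only Borel (mass drifting to $\partial S_n$), and invoke the re-topologization theorem for Borel subsets of Polish spaces; since Definition \ref{notationref3} only asks for the existence of \emph{some} compatible complete separable metric, this suffices, and bimeasurability of your map $\Phi$ hands you the identification of the $\sigma$-algebras for free. What the paper's route buys is an explicit, concrete complete metric and only elementary weak-convergence arguments; what your route buys is robustness (it would work for $S_n$ merely Borel) at the price of the deeper descriptive-set-theoretic change-of-topology theorem and a non-explicit metric. One small point to tighten: when you characterize the image by the countable consistency conditions $\mu_{n+1}(A)=\mu_n(A)$, $A\in\mathcal G_n$, you should include $S_n$ in the countable $\pi$-system $\mathcal G_n$ (or otherwise ensure equal total masses) before applying the uniqueness theorem for finite measures agreeing on a generating $\pi$-system.
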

\begin{proof}
Consider the metric
$$
\rho(\mu,\nu)=\sum_{n=1}^\infty 2^{-n}\min\,\{1,\pi(\mu(\cdot\cap S_n),\nu(\cdot\cap S_n))\},\qquad\mu,\nu\in M_{\bar \NN}(\{S_n\}).
$$
Then $(M_{\bar \NN}(\{S_n\}),\rho)$ is a metric space and the mapping
$$
I:(M_{\bar \NN}(\{S_n\},\rho)\to(M_{\Bbb N}(S),\pi)^{\Bbb N}:\mu\mapsto(\mu(\cdot\cap S_n))_{n\in\Bbb N}
$$
is a homeomorphism onto a closed set in $(M_{\Bbb N}(S),\pi)^{\Bbb N}$, hence $(M_{\bar \NN}(\{S_n\}),\rho)$ is a complete separable metric space by Lemma \ref{lc2}. To show closedness, let $\lim_{j\to\infty}\pi(\mu_j(\cdot\cap S_n),\theta_n)=0$ for every $n\in\Bbb N$, and let $m<k$. If $f\in C_b(S_m)$ and we extend $f$ by zero on $S\setminus S_m$ then $f\in C_b(S)$ since $S_m$ is clopen. So $\partial S_n=\emptyset$, 
$$
\theta_n(S\setminus S_n)=\lim_{j\to\infty}\mu_j((S\setminus S_n)\cap S_n)=0,\qquad\forall n\in\Bbb N,
$$
and
$$
\int_Sf\,d\theta_k=\lim_{j\to\infty}\int_Sf\,d\mu_j(\cdot\cap S_k)=\lim_{j\to\infty}\int_Sf\,d\mu_j(\cdot\cap S_m)=\int_Sf\,d\theta_m
$$
so $\theta_m(\cdot)=\theta_k(\cdot\cap S_m)$. In particular, $\theta_n(A)\uparrow$ for every $A\in\CS$ and $\theta(A)=\lim_n\theta_n(A)$ is a $\sigma$-additive, $\bar{\Bbb N}$-valued measure on $\CS$ and $\theta_m(\cdot)=\theta(\cdot\cap S_m)$.

Now, by Lemma \ref{lc1}, the mapping $\mu\mapsto\mu(A\cap S_k)$ is $\mathscr B(M_{\bar \NN}(\{S_n\}),\rho)$ measurable for every $k\in\Bbb N$ and every $A\in\CS$ since $I$ is Borel measurable. Hence $\CM_{\bar \NN}(\{S_n\})\subseteq\mathscr B(M_{\bar \NN}(\{S_n\}),\rho)$. On the other hand, the mapping
$$
(M_{\bar \NN}(\{S_n\}),\CM_{\bar \NN}(\{S_n\}))\to(M_{\Bbb N}(S),\mathscr B(M_{\Bbb N}(S))):\mu\mapsto\mu(\cdot\cap S_k)
$$
is measurable for every $k\in\Bbb N$ by Lemma \ref{lc1}. So, if $\theta\in M_{\bar \NN}(\{S_n\})$ is fixed, the mapping
$$
(M_{\bar \NN}(\{S_n\}),\CM_{\bar \NN}(\{S_n\}))\to\Bbb R:\mu\mapsto\pi(\mu(\cdot\cap S_k),\theta(\cdot\cap S_k))
$$
is measurable for every $k\in\Bbb N$. Consequently, the mapping
$$
(M_{\bar \NN}(\{S_n\}),\CM_{\bar \NN}(\{S_n\}))\to\Bbb R:\mu\mapsto\rho(\mu,\theta)
$$
is measurable. Since $(M_{\bar \NN}(\{S_n\}),\rho)$ is a separable metric space and every open set is a countable union of open balls, we conclude that $\mathscr B(M_{\bar \NN}(\{S_n\}),\rho)\subseteq\CM_{\bar \NN}(\{S_n\})$.
\end{proof}

}

\end{document}